\documentclass{article}

\usepackage{graphicx} 
\usepackage{amsmath,amssymb,amsthm}
\usepackage[colorlinks=true, allcolors=blue]{hyperref}
\usepackage{algorithm}
\usepackage{algpseudocode}
\usepackage{bbm}
\usepackage[authoryear,round]{natbib}
\usepackage[a4paper, margin=1.2in]{geometry}
\usepackage{xcolor}
\usepackage[affil-it]{authblk}
\usepackage{tikz}
\usetikzlibrary{positioning}
\usepackage{subcaption}

\newtheorem{theorem}{Theorem}
\newtheorem{corollary}{Corollary}
\newtheorem{lemma}{Lemma}
\newtheorem{proposition}{Proposition}
\theoremstyle{definition}
\newtheorem{definition}{Definition}
\newtheorem{assumption}{Assumption}
\theoremstyle{remark}
\newtheorem{remark}{Remark}

\newcommand{\E}{\mathbb{E}}
\newcommand{\PP}{\mathbb{P}}

\newcommand{\kl}[2]{D_{\text{KL}}({#1}\| {#2})}

\newcommand{\var}{\text{Var}}

\newcommand{\x}{\mathcal{X}}

\newcommand{\palg}{p_\text{alg}}

\newcommand{\m}{\mathtt{m}}
\newcommand{\efact}{\varepsilon_\text{fact}}
\newcommand{\tc}{\mathrm{TC}}
\newcommand{\dtc}{\mathrm{DTC}}

\title{Schedule optimization for tau-leaping in masked discrete diffusion}

\author{Cecilia Secchi\thanks{Bocconi University, Department of Decision Sciences, Milan, Italy. \texttt{cecilia.secchi@phd.unibocconi.it} }\;  and Giacomo Zanella\thanks{Bocconi University, Department of Decision Sciences and BIDSA, Milan, Italy. \texttt{giacomo.zanella@unibocconi.it}\\ GZ acknowledges support from the European Research Council (ERC), through StG ``PrSc-HDBayLe''
grant ID 101076564.}}
\date{}

\begin{document}

\maketitle

\begin{abstract}
Masked discrete diffusion models are commonly accelerated using the so-called tau-leaping discretization method, which reveals several coordinates in parallel at each sampling step. 
The sampler replaces the joint conditional law of each revealed block by a product distribution, incurring a \emph{factorization error} $\efact$ present even with perfectly learned predictors. 
We analyze the standard sampler on $N$ coordinates with $K$ sampling steps, whose random block sizes depend on a denoising schedule.
Our analysis uses an exact integral representation of $\efact$ in terms of a distribution-dependent \emph{dependence density} $\rho$, which records how conditional dependence evolves as the revealed fraction of coordinates grows.
We develop estimators for this profile and quantify how estimation errors affect schedule selection.
We derive recursive stationarity equations for the finite-$K$ optimization problem and, under a monotonicity condition, characterize its unique optimizer. 

In the joint limit $N,K\to\infty$, we obtain an explicit characterization of the optimal limiting smooth schedule and quantify the cost of random block sizes relative to a deterministic planner.
When $\rho_N$ converges uniformly to a strictly positive continuous profile, optimizing over fixed smooth schedules can improve the leading constant but not the $N/K$ scaling of $\efact$.
By contrast, if $\rho_N$ degenerates, suitable schedules can improve the asymptotic order relative to the uniform schedule.
Examples based on stationary processes and exchangeable mixtures illustrate these two regimes.
\end{abstract}

\section{Introduction}

Discrete diffusion models \citep{austin2021structured,shi2024simplified,sahoo2024simple, campbell2022continuous} have recently emerged as a competitive framework for generative modeling on discrete domains, with applications to text, images, music and biological sequences. 
These models inherit their structure from their continuous-space counterparts \citep{ ho2020denoising, song2020score}:  they are built from two processes, a forward noising process that gradually destroys information, and a reverse process that reconstructs samples from noise. 

This paper studies masked discrete diffusion models. 
A data point is a sequence $x=(x^1,\dots,x^N)$ of $N$ tokens drawn from a target distribution $\pi$ on a finite vocabulary. In the forward process, each coordinate is independently replaced by a special mask symbol $\m$ according to a decreasing noise schedule $\alpha_t$ over the time interval $[0,1]$. 
The reverse process starts from the fully masked sequence and progressively reveals masked coordinates according to the reverse schedule
\[
\beta_t := \alpha_{1-t}.
\]
At each reverse step, the model is trained to receive the set $M\subseteq[N]:=\{1,\dots,N\}$ of the currently revealed coordinates and to output, for every unrevealed coordinate $i\notin M$, a one-coordinate conditional distribution
$p_\theta^i(\cdot\mid x^M)$.
The ideal values  of the standard learned time-independent predictors \citep{ou2024your,zheng2024masked,kim2025train} satisfy
\[
p_\theta^i(\cdot\mid x^M)=\pi_i(\cdot\mid x^M),
\qquad M\subseteq[N],\ i\notin M,
\]
where $\pi_i(\,\cdot\mid x^M)$ is the true conditional law of $X^i$ given the coordinates $X^M=x^M$ (see Section \ref{sec:background}).

Exact reverse sampling can be implemented by simulating the underlying continuous-time Markov chain (CTMC) with the Gillespie algorithm \citep{gillespie1977exact}.  In the masked setting, however, exact simulation reveals one coordinate at a time, and therefore requires \(N\) sequential updates; for long sequences this is expensive, as each update requires a model call. 
In practice, one uses the tau-leaping discretization \citep{gillespie2001approximate,campbell2022continuous}: the reverse time interval is divided into sub-intervals, and within each sub-interval several coordinates are revealed in parallel, using conditionals computed at the start of the step.
This produces a sample in a prescribed number \(K\ll N\) of iterations.

The price of this acceleration is a systematic discretization error. 
When several coordinates are revealed simultaneously, the exact reverse law requires the joint conditional distribution of the whole block, whereas tau-leaping replaces it by the product of the one-coordinate conditionals. 
Thus, even with a perfectly trained model, the sampler incurs an error whenever the coordinates revealed in the same block are conditionally dependent given the previously revealed ones. 
We call this intrinsic discretization error the \emph{factorization error}, which we denote  $\efact$.
 
The error is governed by the reverse schedule $\beta$: large increments reveal many coordinates in parallel and reduce the number of model calls, but introduce substantial conditional-independence bias; small increments reduce the bias but require more sequential steps. 
The schedule therefore sets a tradeoff between sampling cost and approximation accuracy. 
Most existing analyses give upper bounds on the error of a prescribed schedule. In this work we study the converse question: choosing the schedule that minimizes $\efact$ for a given budget of \(K\) reverse steps.

\paragraph{Contribution}
We first express the CTMC tau-leaping sampler for masked diffusion (Algorithm \ref{alg:standard-tau-leaping}) in an equivalent formulation, in which a sampling trajectory is represented as an ordered random partition of the coordinate set (Algorithm~\ref{alg:tau-leaping2}).

For this sampler we work with a $\pi$-dependent function $\rho$, the \emph{dependence density}. Up to the factor $N-1$, its value $\rho(u)$ is the average conditional mutual information between two distinct coordinates when each of the others is independently revealed with probability $u$, and it yields the exact representation (Theorem \ref{thm:main})
\[
\efact(\beta)
=
N\sum_{k=1}^K
\int_{\beta_{t_{k-1}}}^{\beta_{t_k}}
(\beta_{t_k}-u)\rho(u)\,du.
\]
The formula separates the effect of the schedule from the dependence structure of the target: the schedule enters only through the weights \((\beta_{t_k}-u)\), while all the relevant information about the target law is contained in \(\rho\).

The main payoff is that the \emph{shape} of $\rho$ determines how strongly the schedule matters. When $\rho_N$ converges to a nondegenerate continuous profile $g$, optimization over fixed smooth schedules can improve the leading constant of $\efact$ but preserves its $N/K$ scaling (Theorem \ref{thm:asym}); the linear schedule is therefore order-optimal. 
This regime includes locally dependent distributions arising from ergodic Markov chains or more generally sufficiently regular stationary processes (Propositions \ref{prop:markov-example}, \ref{prop:stationary-example}).
For exchangeable mixtures, by contrast, the dependence is global and mediated by a latent variable. As a result, $\rho_N$ concentrates near the origin and its limiting profile degenerates (Proposition \ref{prop:exchangeable-example}); in this regime, schedules with finer early steps can change the asymptotic order of the error. 
With $K_N\approx \log N$, we show that the uniform schedule has error $\Omega(N/\log N)$ whereas a geometric schedule has error $O(\log N)$ (Proposition~\ref{prop:comparison-schedules}). 

The representation has three further consequences. First, it yields explicit stationarity equations satisfied by every finite-step optimizer; under a monotonicity condition, these equations characterize the unique optimizer (Corollary~\ref{cor:recursive}).
Second, the large-$N$ limit yields a variational problem with a closed-form leading-order optimizer (Corollary~\ref{cor:asymptotic-optimal-solution}). 
Third, we quantify the asymptotic penalty caused by the block-size randomness intrinsic to tau-leaping, relative to a deterministic planner following the same limiting schedule (Proposition \ref{prop:comparison-disc}).

The total correlation $\tc$ and the dual total correlation $\dtc$ can be expressed through $\rho$, and this result lets us place several existing schedule bounds and heuristics in a common framework (Propositions~\ref{prop:uniform-bound}, \ref{prop:boundprevious-work}).
Finally, since $\rho$ depends on the unknown target, we estimate its conditional mutual information coefficients through an auxiliary information profile. We compare two estimators of this auxiliary profile and derive a stability bound for schedule optimization (Section~\ref{sec:profile-estimators}).

\paragraph{Related work}
In the CTMC formulation of discrete diffusion models, \citet{conforti2025non, dmitriev2026efficient} establish non-asymptotic convergence guarantees for discrete diffusion samplers, including masked dynamics. 
Their analyses upper bound the terminal KL divergence by controlling the distance between path measures, decomposed into initialization, score-approximation, and time-discretization terms.

A second, masked-specific, line of theory integrates out the continuous-time path and works directly with the terminal ordered partition, controlling the discrepancy between the algorithmic and target distributions through the global information-theoretic quantities $\tc$ and $\dtc$.
\citet{li2025breaking} bound the error of any deterministic schedule, giving a step budget of order $K\gtrsim \tc+\dtc$ to reach a fixed accuracy. 
Using two different schedule constructions, \citet{chen2025optimal} and \citet{zhao2026adaptation} each further reduce the step count to $K\gtrsim \tc\log N$ or $K\gtrsim \dtc\log N$.
In the CTMC formulation above, \citet{dmitriev2026efficient} reach the sharper rate $K\gtrsim \min\{\tc,\dtc\}\log N$.

Related to the present work, \citet{lavenant2025error} and \citet{chen2025optimal} derive the information-profile representation of $\efact$ on which our analysis builds. Starting from this result, \citet{chen2025optimal} study the difficulty of learning the optimal schedule, while \citet{lavenant2025error} study the $N\to\infty$ scaling limit of the schedule-optimization problem for deterministic block sizes.

The practical importance of the masking schedule is well recognized. Cosine-type and other monotone increasing schedules are standard in masked diffusion implementations \citep{shi2024simplified,zhang2025cosine}. In adaptive or online scheduling, \citet{ben2025accelerated} and \citet{foresti2026improved} build schedules on the principle that each parallel update should contribute a roughly constant amount of information, a heuristic counterpart to the error-optimal recursion of Corollary~\ref{cor:recursive}; \citet{cai2026confidence} provide theoretical guarantees for algorithms of this type.

\paragraph{Organization}
Section~\ref{sec:background} reviews masked diffusion models and the tau-leaping sampler.
Section~\ref{sec: main result} introduces the conditional mutual information profile and the dependence density, states the exact integral representation of the factorization error, derives the recursive optimality condition, and establishes upper bounds.
Section~\ref{sec:asymptotic} studies the asymptotic variational limit and the resulting optimal smooth schedules.
Section~\ref{sec:examples} computes the dependence density for several families of target distributions.
Section~\ref{sec:profile-estimators} studies estimation of $\iota$ and $\rho$ through the auxiliary profile $f$, and quantifies the effect of estimation errors on schedule optimization.
Technical proofs are collected in the appendix.

\paragraph{Concurrent work.} 
Concurrent and independent work by \citet{wainwright2026data}, which appeared online during the final writing phase of the present manuscript, derives similar integral representation for the factorization error, data-dependent schedule bounds and a fine-partition analysis.

\paragraph{Notation.}
Unless otherwise stated, $N\ge2$ and the finite vocabulary $\x$ has size $|\x|=L\ge2$. Let $\x^*:=\x\cup\{\m\}$ be the augmented vocabulary, where $\m$ denotes the mask symbol.
The elements of $(\x^*) ^{N}$ are written as $x=(x^1,\dots,x^N)\in(\x^*) ^{N}$. For any subset $M\subseteq[N]$, we denote by $x^M=(x^i)_{i\in M}\in (\x^*)^{|M|}$ the vector consisting of the entries of $x$ indexed by $M$. 

Let $\pi$ be the target distribution on $\x^N$.  For any subset $M \subseteq [N]$, let $\pi_M$ denote the marginal distribution of $x^M$ under $\pi$. For each $i \in [N]\setminus M$, define the conditional distribution $\pi_i(\cdot\mid x^M)=\pi(x^i=\cdot\mid x^M)$. 

\section{Masked diffusions, tau-leaping and ordered partitions}
\label{sec:background}

Adopting the continuous-time Markov chain (CTMC) perspective, we first review the construction of masked diffusion models and the standard sampling procedure based on the tau-leaping approximation \citep{shi2024simplified,sahoo2024simple,austin2021structured, kim2025train}.
We then give an equivalent formulation of the generative process in which a sampling trajectory is represented by an ordered random partition of the coordinate set; this representation is the starting point for our analysis of the factorization-error.

\paragraph{Forward process.} 
Let $\alpha : [0,1] \to [0,1]$ be a continuous, strictly decreasing noise schedule with $\alpha_0 = 1$ and $\alpha_1 = 0$. 
Starting from a data point $x_0 \sim \pi$, the forward process masks the coordinates independently according to $\alpha$ as follows. At time $t \in [0,1]$, coordinate $i$ is left unchanged with probability $\alpha_t$ and is replaced by $\m$ with probability $1-\alpha_t$. 
Therefore, the conditional distribution of $x_t$ given $x_0$ factorizes as
\[
q_{t\mid 0}(x_t\mid x_0)=\prod_{i=1}^{N} q_{t\mid 0}^{i}(x_t^i\mid x_0^i),
\qquad
q_{t\mid 0}^{i}(x_t^i\mid x_0^i)=\mathrm{Cat}\!\left(\alpha_t e_{x_0^i}+(1-\alpha_t)e_{\m}\right).
\]
Here $e_j \in \mathbb{R}^{L+1}$ denotes the one-hot vector associated with token $j$, and $\mathrm{Cat}(\mu)$ the categorical distribution on $\x^*$ with probability vector $\mu$. 
Since $\alpha_1=0$, the terminal state is almost surely fully masked. 

\paragraph{Reverse process}
The reverse process starts from the fully masked state $x_1=(\m,\dots,\m)\in(\x^*)^N$ and generates a sample from $\pi$ at time $0$. 
Unlike the forward process, the reverse dynamics generally couple the coordinates, so exact simulation proceeds through sequential single-coordinate updates.
To enable parallel sampling,
for $0\le s<t\le 1$ the joint reverse transition $q_{s\mid t}(\cdot\mid x_t)$ is replaced by the product of its one-coordinate marginals.
For each fixed $x_t$, this product is the optimal factorized approximation of the true joint conditional, in the sense that it minimizes $p\mapsto \mathrm{KL}\bigl(q_{s\mid t}(\cdot\mid x_t)\,\|\,p\bigr)$ over all product distributions $p$ \citep[Thm.~4.2 and App.~A]{lou2023discrete}.

Given $x_t\in(\x^*)^N$, let $U_t:=\{j\in[N]:x_t^j\neq \m\}$ denote the set of unmasked coordinates, 
and write $x_t^{U_t}$ for the corresponding values.
For $0\le s<t\le 1$, the one-coordinate marginals of the true reverse law $q_{s\mid t}(x_s\mid x_t)$ are given by \citep[App.~D.2]{ou2024your}:
\begin{equation}\label{eq:revprocess}
q_{s\mid t}^i(\,\cdot\mid x_t)
=
\begin{cases}
\delta_{x_t^i},
& x_t^i\neq \m,\\
\mathrm{Cat}\!\left(
\frac{1-\alpha_s}{1-\alpha_t}e_{\m}
+
\frac{\alpha_s-\alpha_t}{1-\alpha_t}
\sum_{x\in\x}
\pi_i(x\mid x_t^{U_t})e_x
\right),
& x_t^i=\m,
\end{cases}
\end{equation}
where $\pi_i(x\mid x_t^{U_t}) = \pi(x_0^i=x\mid x_0^{U_t}=x_t^{U_t})$
is the conditional law, under $\pi$, of the clean token at coordinate $i$ given that the coordinates $x_t^{U_t}$ are observed. 
In practice, the unknown conditional $\pi_i(\cdot\mid x_t^{U_t})$ is replaced by a learned, time-independent predictor $p_\theta^i(\cdot\mid x_t^{U_t})$, which is a distribution over clean tokens and assigns zero mass to $\m$.
The sampler thus evolves according to the factorized kernel
\begin{equation}\label{eq:reverse}
\hat q_{s\mid t}^{\theta}(x_s\mid x_t)
=
\prod_{i=1}^N
\hat q_{s\mid t}^{\theta,i}(x_s^i\mid x_t),
\qquad
\hat q_{s\mid t}^{\theta,i}(\,\cdot\mid x_t)
=
\begin{cases}
\delta_{x_t^i},
& x_t^i\neq \m,\\
\mathrm{Cat}\!\left(
\frac{1-\alpha_s}{1-\alpha_t}e_{\m}
+
\frac{\alpha_s-\alpha_t}{1-\alpha_t}
\sum_{x\in\x}
p_\theta^i(x\mid x_t^{U_t})e_x
\right),
& x_t^i=\m.
\end{cases}
\end{equation}
This parallel update rule is the Tweedie tau-leaping approximation of the exact reverse dynamics \citep{lou2023discrete}.

\subsection{Tau-leaping as a random ordered partition}
Given the predictors $p_\theta^i$, a time grid $0=t_0<t_1<\cdots<t_K=1$ and a continuous, strictly increasing denoising schedule $\beta:[0,1]\to[0,1]$, with $\beta_0=0$ and $\beta_1=1$, define a sampling scheme.
Equivalently, $\beta$ is determined by the forward noise schedule above through the relation $\beta_t=\alpha_{1-t}$.
Set 
\[
\beta_k:=\beta_{t_k} \qquad q_k:=\frac{\beta_k-\beta_{k-1}}{1-\beta_{k-1}}
\qquad \text{ for } k=1,\dots,K.
\]
The standard reverse tau-leaping sampler in Algorithm~\ref{alg:standard-tau-leaping} is obtained from \eqref{eq:revprocess} with $\alpha_t=\beta(1-t)$ by taking $(s,t)=(1-t_k,1-t_{k-1})$ at the $k$-th step; we write $y_k$ for its state after $k$ steps.
At each step, every currently masked coordinate is independently selected for unmasking with probability $q_k$; if selected, its value is sampled from the one-coordinate predictor $p_\theta^i(\cdot\mid y_{k-1})$.
For a state $y$ with unmasked set $U$, we use $p_\theta^i(\cdot\mid y)$ as shorthand for $p_\theta^i(\cdot\mid y^U)$.

\begin{algorithm}[H]
\caption{Standard reverse tau-leaping sampler}
\label{alg:standard-tau-leaping}
\begin{algorithmic}[1]
\Require schedule $0=\beta_0<\beta_1<\cdots<\beta_K=1$, predictor $p_\theta$
\State Initialize $y_0\gets(\m,\dots,\m)$
\For{$k=1$ to $K$}
    \For{$i=1$ to $N$}
        \If{$y_{k-1}^i=\m$}
            \State Sample $y_k^i\sim\mathrm{Cat}\bigl((1-q_k)\,e_\m+q_k\,p_\theta^i(\cdot\mid y_{k-1})\bigr)$
        \Else
            \State $y_k^i\gets y_{k-1}^i$
        \EndIf
    \EndFor
\EndFor
\State \Return $y_K$
\end{algorithmic}
\end{algorithm}
 
For the analysis, it is convenient to turn to an equivalent formulation of the same sampler, as done in   \cite{ben2025accelerated,li2025breaking,lavenant2025error, chen2025optimal}.
Algorithm~\ref{alg:tau-leaping2} makes explicit that tau-leaping samples a random ordered partition: the draws of $s_k$ and $z_k$ depend on the past only through $z_{<k}$, and not on the generated token values.
Hence the entire block sequence $z$ may equivalently be sampled in advance, after which the token values are generated block by block.
For an integer $0\le s\le|A|$, let $\mathrm{Unif}(A;s)$ denote the uniform distribution over the subsets of $A$ of cardinality $s$.

\begin{algorithm}[H]
\caption{Reverse tau-leaping sampler, second form}
\label{alg:tau-leaping2}
\begin{algorithmic}[1]
\Require schedule $0=\beta_0<\beta_1<\cdots<\beta_K=1$, predictor $p_\theta$
\State Initialize $y_0\gets(\m,\dots,\m)$ and $z_{<1}\gets\varnothing$
\For{$k=1$ to $K$}
    \State Sample $s_k\sim\mathrm{Bin}\bigl(N-|z_{<k}|,\,q_k\bigr)$
    \State Sample $z_k\sim\mathrm{Unif}\bigl([N]\setminus z_{<k};\,s_k\bigr)$
    \State Sample $y_k^{z_k}\sim\prod_{i\in z_k}p_\theta^i(\cdot\mid y_{k-1}^{z_{<k}})$
    \State Set $z_{<k+1}\gets z_{<k}\cup z_k$
    \State Set $y_k^{z_{<k}}\gets y_{k-1}^{z_{<k}}$ and $y_k^{i}\gets\m$ for all $i\notin z_{<k+1}$
\EndFor
\State \Return $y_K$
\end{algorithmic}
\end{algorithm}

\begin{proposition}[Equivalence of tau-leaping implementations]
\label{prop:tau-leaping-equivalence}
For every schedule $0=\beta_0<\beta_1<\cdots<\beta_K=1$ and every predictor $p_\theta$, Algorithms~\ref{alg:standard-tau-leaping} and~\ref{alg:tau-leaping2} induce the same law on the trajectory $(y_0,\dots,y_K)$ and, in particular, on the terminal output $y_K$.
\end{proposition}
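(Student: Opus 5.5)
Both algorithms generate Markov chains $(y_k)_{k=0}^K$ on $(\x^*)^N$ started at the fully masked state $y_0=(\m,\dots,\m)$. The law of the whole trajectory is determined by its initial law and its one-step transition kernels. So it suffices to show that, for each $k$ and each state $y_{k-1}$, the conditional law of $y_k$ given $(y_0,\dots,y_{k-1})$ is the same under both algorithms; the equality of trajectory laws then follows by induction on $k$ via the chain rule.

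For Algorithm~\ref{alg:tau-leaping2} we first note that it is Markov in $y$ as well. Since predictors assign zero mass to $\m$, the revealed set $z_{<k}$ is exactly the unmasked set $U_{k-1}$ of $y_{k-1}$. Moreover, the conditioning $p_\theta^i(\cdot\mid y_{k-1}^{z_{<k}})$ coincides with the shorthand $p_\theta^i(\cdot\mid y_{k-1})$. Hence $z_{<k}$ is a function of $y_{k-1}$, and the step-$k$ randomness of Algorithm~\ref{alg:tau-leaping2} depends on the past only through $y_{k-1}$. Fix $y_{k-1}$ with masked set $A=[N]\setminus U_{k-1}$, $|A|=n$. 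In Algorithm~\ref{alg:standard-tau-leaping}, coordinates outside $A$ are frozen. Coordinates in $A$ are independent given $y_{k-1}$, with $\PP(y_k^i=\m)=1-q_k$ and $\PP(y_k^i=x)=q_k\,p_\theta^i(x\mid y_{k-1})$ for $x\in\x$. Thus for any target $y_k$ agreeing with $y_{k-1}$ off $A$, with newly unmasked set $B\subseteq A$ and $|B|=s$,
\[
\PP_1(y_k\mid y_{k-1}) = q_k^{s}(1-q_k)^{n-s}\prod_{i\in B}p_\theta^i(y_k^i\mid y_{k-1}).
\]

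In Algorithm~\ref{alg:tau-leaping2}, the event that $y_k$ is produced requires $s_k=s$, $z_k=B$, and the token draws to match. The binomial and uniform factors multiply to $\binom{n}{s}q_k^s(1-q_k)^{n-s}\cdot\binom{n}{s}^{-1}$, and the product of predictors over $B$ gives the same expression as above. We then check the bookkeeping: coordinates in $z_{<k}$ are copied and coordinates outside $z_{<k+1}$ are set to $\m$, so the support matches. The map from $(s_k,z_k,y_k^{z_k})$ to $y_k$ is injective, because $z_k$ is recovered as the newly unmasked set, using again that predictors never output $\m$. Hence the two kernels agree pointwise, and the induction closes.

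The only delicate point is this identification of $B$ with $z_k$, together with the fact that $z_{<k}$ is determined by $y_{k-1}$. Both hinge on the predictors placing no mass on $\m$, so I would state that explicitly. Everything else is the elementary identity that independent Bernoulli$(q_k)$ selections are equivalent to drawing a Binomial size followed by a uniform subset of that size.
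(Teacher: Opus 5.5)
Your proposal is correct and follows essentially the paper's argument. Like the paper, you reduce to equality of the one-step kernels of two Markov chains started at the fully masked state. You compute both kernels as $q_k^{|R|}(1-q_k)^{r-|R|}\prod_{i\in R}p_\theta^i(x^i\mid y)$, using $p_\theta^i(\m\mid y)=0$ to identify the newly revealed set with $z_k$. Your extra observation that $z_{<k}$ is determined by $y_{k-1}$, so that Algorithm~\ref{alg:tau-leaping2} is Markov in $y$ alone, makes explicit a point the paper leaves implicit.
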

The proof is given in Appendix~\ref{app:proof section equivalence}.

Let $\nu^\beta$ denote the law of $z$ defined in Algorithm~\ref{alg:tau-leaping2}. It factorizes as
\[
\nu^\beta(z)=\prod_{k=1}^K\nu^\beta(z_k\mid z_{<k}), \qquad \nu^\beta(z_k\mid z_{<k})=q_k^{\,|z_k|}(1-q_k)^{\,N-|z_{<k}|-|z_k|}
\]
for every admissible $z_k\subseteq[N]\setminus z_{<k}$, and the conditional probability is zero otherwise.
We use the convention $0^0=1$.
Since $\beta_K=1$ we have $q_K=1$, so every coordinate still masked before the final step is revealed at step $K$; hence $(z_1,\dots,z_K)$ is an ordered partition of $[N]$ with possibly empty blocks: the $z_k$ are pairwise disjoint and $\bigcup_{k=1}^{K}z_k=[N]$.

Conditionally on the ordered partition $z$, the model generates the terminal sample $x:=y_K\in\x^N$ block by block.
Because revealed coordinates are never modified, $y_{k-1}^{z_{<k}}=x^{z_{<k}}$, and the conditional law of $x$ given $z$ is
\[
p_\theta(x;z)
:=\prod_{k=1}^{K}p_\theta\bigl(x^{z_k}\mid x^{z_{<k}}\bigr),
\qquad
p_\theta\bigl(x^{z_k}\mid x^{z_{<k}}\bigr)
:=\prod_{i\in z_k}p_\theta^i\bigl(x^i\mid x^{z_{<k}}\bigr),
\]
with the convention that conditioning on $x^{z_{<1}}=x^{\varnothing}$ means conditioning on the fully masked input.
The joint law of the output and of the ordered partition is therefore
\[
\palg(x,z)=p_\theta(x;z)\,\nu^\beta(z),
\]
and the law of the sampler output is its $x$-marginal,
\[
\palg(x)=\sum_z p_\theta(x;z)\,\nu^\beta(z).
\]

\subsection{Factorization error}

The denoising schedule $\beta$ determines how coordinates are allocated across parallel update blocks. To study its effect on sampling accuracy, we consider an upper bound on the KL divergence between the target distribution and the marginal output distribution.

Following the literature (see e.g. \citealp{ben2025accelerated,li2025breaking,lavenant2025error, chen2025optimal}), we define the \emph{learning error} and the \emph{factorization error}, respectively, by
\begin{align}
\varepsilon_{\mathrm{learn}}
&:=
\mathbb E_{\pi(x)\nu^\beta(z)}
\sum_{k=1}^K
\sum_{i\in z_k}
\log
\frac{
\pi_i(x^i\mid x^{z_{<k}})
}{
p_\theta^i(x^i\mid x^{z_{<k}})
},
\label{eq:learning-error}
\\
\varepsilon_{\mathrm{fact}}
&:=
\mathbb E_{\pi(x)\nu^\beta(z)}
\sum_{k=1}^K
\log
\frac{
\pi(x^{z_k}\mid x^{z_{<k}})
}{
\prod_{i\in z_k}
\pi_i(x^i\mid x^{z_{<k}})
}.
\label{eq:factorization-error}
\end{align}
The chain rule for KL divergence, together with its monotonicity under marginalization, yields
\begin{equation}\label{eq:kl-decomposition}
\kl{\pi(x)}{\palg(x)}
\le
\kl{\pi(x,z)}{\palg(x,z)}
=
\varepsilon_{\mathrm{learn}}
+
\varepsilon_{\mathrm{fact}},
\end{equation}
where $\pi(x,z):=\pi(x)\nu^\beta(z)$ and $\palg(x,z):=p_\theta(x;z)\nu^\beta(z)$.

Both terms are nonnegative: $\efact$  is an average of conditional total correlations, while $\varepsilon_{\mathrm{learn}}$ is an average of one-coordinate conditional KL divergences. The learning error therefore vanishes when all one-coordinate conditionals are learned exactly:
\[
p_\theta^i(\cdot\mid x^M)=\pi_i(\cdot\mid x^M),
\qquad M\subseteq[N],\quad i\notin M.
\]
By contrast, the factorization error generally persists even under perfect learning. Indeed it is a purely algorithmic error induced by parallel updates and quantifies the discrepancy introduced by approximating each joint block conditional distribution by the product of its one-coordinate conditional marginals,
\[
\pi(x^{z_k}\mid x^{z_{<k}})
\approx\prod_{i\in z_k}\pi_i(x^i\mid x^{z_{<k}}).
\]
Under perfect learning, $\efact$ equals the joint KL divergence in \eqref{eq:kl-decomposition} and thus provides an upper-bound on the terminal KL divergence. This motivates our objective: for a fixed budget of $K$ steps, we seek a schedule that minimizes the factorization error,
\begin{equation}\label{eq:kl-target}
\min_{0=\beta_0\le\beta_1\le\cdots\le\beta_K=1}
\varepsilon_{\mathrm{fact}}(\beta).
\end{equation}

\section{Dependence densities and optimal schedules} \label{sec: main result}

We introduce the definitions needed to express the factorization error as an integral functional of the schedule.

To quantify the conditional dependence underlying this error, let $\iota(i)$ denote the average dependence between two unrevealed coordinates after observing $i$ uniformly selected coordinates:
\begin{equation}\label{eq:cmi-profile}
\iota(i):=\E_\sigma\!\left[
I_\pi\!\left(X^{\sigma_{i+1}};X^{\sigma_{i+2}}\mid X^{\sigma_{\le i}}\right)
\right]\ge0\qquad i=0,\dots,N-2,
\end{equation}
where $\sigma$ is an independent, uniformly random permutation of $[N]$, $\sigma_{\le i}:=\{\sigma_1,\ldots,\sigma_i\}$, with $\sigma_{\le0}=\varnothing$. For fixed, distinct coordinate indices $a,b$ and a set $A$ disjoint from them, $I_\pi(X^a;X^b\mid X^A)$ denotes the standard conditional mutual information under $\pi$.

\begin{definition}[Dependence density]\label{def:dependence-density}
For a distribution $\pi$ on $\x^N$, define
\begin{equation}\label{eq:bernstein_rep}
\rho(u):=(N-1)\sum_{i=0}^{N-2}\iota(i)B_i^{N-2}(u),
\qquad u\in[0,1],
\end{equation}
where $B_i^n(u):=\binom ni u^i(1-u)^{n-i}$ is the $i$-th Bernstein basis polynomial of degree $n$, and $\iota$ is given by \eqref{eq:cmi-profile}.
\end{definition}
Equivalently,
\begin{equation}\label{eq:rho-binomial}
\rho(u)=(N-1)\E[\iota(B_u)],
\qquad B_u\sim\mathrm{Bin}(N-2,u).
\end{equation}
Thus $\rho(u)/(N-1)$ is the average conditional mutual information between two uniformly selected distinct coordinates when each of the other $N-2$ coordinates is independently revealed with probability $u$. It records how conditional dependence varies with the reveal probability and is not normalized to integrate to one.

Since $\iota(i)\ge0$, the function $\rho$ is nonnegative. The next lemma, proved in Appendix~\ref{app:section:main result}, characterizes when it vanishes.
\begin{lemma}\label{lem:product-measure}
If $\pi$ is a product measure, then $\rho\equiv 0$; otherwise $\rho(u)>0$ for every $u\in(0,1)$.
\end{lemma}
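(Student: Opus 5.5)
The plan is to use the Bernstein representation \eqref{eq:bernstein_rep}. For $u\in(0,1)$ every basis polynomial $B_i^{N-2}(u)$ is strictly positive, and every coefficient $\iota(i)$ is nonnegative. Hence $\rho(u)>0$ for all $u\in(0,1)$ as soon as a single $\iota(i)$ is positive. Conversely, $\rho\equiv0$ on $[0,1]$ as soon as all $\iota(i)$ vanish. The lemma therefore reduces to the claim that $\pi$ is a product measure if and only if $\iota(i)=0$ for every $i=0,\dots,N-2$. (For $N=2$ the sum has only the term $i=0$, and the argument below still applies.)

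\emph{Product case.} If $\pi=\bigotimes_j\pi_j$, then for any distinct $a,b$ and any $A$ disjoint from them, $X^a$, $X^b$ and $X^A$ are mutually independent. So $I_\pi(X^a;X^b\mid X^A)=0$, and averaging over $\sigma$ gives $\iota(i)=0$ for all $i$.

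\emph{Non-product case.} I argue by contrapositive: suppose $\iota(i)=0$ for all $i$. Each $\iota(i)$ is a uniform average over ordered triples $(a,b,A)$ with $|A|=i$ of the nonnegative quantities $I_\pi(X^a;X^b\mid X^A)$. Every such triple has positive probability under $\sigma$, so each of these conditional mutual informations vanishes. In particular, for each $k=1,\dots,N-1$ and each $j\in\{1,\dots,k\}$, taking $A=\{1,\dots,k\}\setminus\{j\}$ gives $I(X^j;X^{k+1}\mid X^{\{1,\dots,k\}\setminus\{j\}})=0$.

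The chain rule for mutual information then gives
\[
I\bigl(X^{\{1,\dots,k\}};X^{k+1}\bigr)=\sum_{j=1}^{k} I\bigl(X^j;X^{k+1}\mid X^{\{1,\dots,j-1\}}\bigr),
\]
and each summand is one of the vanishing terms, with conditioning set $\{1,\dots,j-1\}$ of size $j-1\le N-2$. Therefore $X^{k+1}$ is independent of $X^{\{1,\dots,k\}}$ for every $k$. Telescoping the chain rule for the joint law gives $\pi(x)=\prod_{k}\pi_k(x^k)$, so $\pi$ is a product measure, contrary to assumption.

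The step I expect to need the most care is this last one: checking that the conditioning sets that appear in the chain rule have sizes within the index range $0,\dots,N-2$ of $\iota$, and that the uniform permutation gives positive weight to every such configuration. Both follow directly from the definition \eqref{eq:cmi-profile}. Combining the two cases with the positivity of the Bernstein basis on $(0,1)$ proves the lemma.
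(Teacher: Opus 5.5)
Your proof is correct, but it establishes the key equivalence ``$\pi$ is a product measure iff $\iota(i)=0$ for all $i$'' by a different route from the paper. The paper does not argue coordinate by coordinate. It invokes the identity $\tc(\pi)=\sum_{i=0}^{N-2}(N-i-1)\iota(i)$ from Lemma~\ref{lemma:correlation} together with $\tc(\pi)=\kl{\pi}{\otimes_{j}\pi_j}$. Since the weights $N-i-1$ are positive and $\iota\ge0$, both directions follow at once, and the Bernstein positivity step is then exactly yours.

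Your chain-rule argument is in effect the unaveraged, single-ordering version of that identity. Expanding $\tc(\pi)=\sum_{k=1}^{N-1}I(X^{\{1,\dots,k\}};X^{k+1})$ into the terms $I(X^j;X^{k+1}\mid X^{\{1,\dots,j-1\}})$ and averaging over a uniform relabelling of the coordinates produces precisely the weights $N-i-1$.

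The paper's route is shorter and quantitative, since it ties the $\iota(i)$ to a global dependence functional. However, it relies on Lemma~\ref{lemma:correlation}, which is stated later and proved through the information profile $f$ and $\iota=\Delta f$. Yours is self-contained: it uses only the definition \eqref{eq:cmi-profile}, and it needs the conditional mutual informations to vanish only along the prefix conditioning sets of a single ordering.

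One small cleanup: you never use the sentence taking $A=\{1,\dots,k\}\setminus\{j\}$, because the chain rule you actually write conditions on $\{1,\dots,j-1\}$. It is harmless, since those terms vanish too, but you should state the vanishing for the prefix sets you need instead.
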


The following theorem gives an exact integral representation of the factorization error for the tau-leaping sampler. Its proof, presented in Appendix~\ref{app:section:main result}, builds on the implicit factorization-error representation developed independently by \citet{lavenant2025error} and \citet{chen2025optimal}, which we recall there in terms of conditional mutual information.

\begin{theorem}[Representation of $\efact$]\label{thm:main}
The factorization error defined in Equation \eqref{eq:factorization-error} for Algorithm \ref{alg:tau-leaping2} admits the representation
\begin{equation}\label{eq:mainresult}
            \varepsilon_\text{fact}(\beta)
        =N\sum_{k=1}^K \int_{\beta_{k-1}}^{\beta_k} (\beta_k-u) \rho(u)du.
\end{equation}
\end{theorem}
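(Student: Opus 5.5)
We first rewrite each summand of $\efact$ as a conditional total correlation and expand it by the chain rule into pairwise conditional mutual informations. Fix $z$ and a step $k$, and order the coordinates of $z_k$ arbitrarily as $a_1,\dots,a_m$. The chain rule gives
\[
\E_\pi\log\frac{\pi(x^{z_k}\mid x^{z_{<k}})}{\prod_{i\in z_k}\pi_i(x^i\mid x^{z_{<k}})}
=\sum_{j=2}^{m} I_\pi\bigl(X^{a_j};X^{\{a_1,\dots,a_{j-1}\}}\mid X^{z_{<k}}\bigr).
\]
Each term expands again by the chain rule:
\[
I\bigl(X^{a_j};X^{a_{<j}}\mid X^{z_{<k}}\bigr)=\sum_{l<j} I\bigl(X^{a_j};X^{a_l}\mid X^{z_{<k}\cup a_{<l}}\bigr).
\]
Averaging over a uniformly random internal order makes the conditioning set $z_{<k}$ together with a uniformly random subset of $z_k$. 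This is exactly the implicit representation of \citet{lavenant2025error,chen2025optimal}, and we may invoke it.

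The result is a statement of the following form. $\efact$ equals the expected value, over $z\sim\nu^\beta$ and a uniform permutation $\sigma$ compatible with the block order, of
\[
\sum_{\text{pairs } (\sigma_{r},\sigma_{s}),\ r<s,\ \text{in the same block}} I\bigl(X^{\sigma_s};X^{\sigma_r}\mid X^{\sigma_{<r}}\bigr).
\]
By exchangeability of the random ordered partition (Algorithm~\ref{alg:tau-leaping2} picks subsets uniformly), the coordinate order $\sigma$ is uniform on permutations. Each conditional mutual information term therefore depends only on the position $i=r-1$. Its expectation is $\iota(i)$, provided the terms are paired appropriately, i.e.\ conditioning on $\sigma_{\le i}$ with the pair being $\sigma_{i+1}$ and a later element.

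Next we count. For a fixed conditioning position $i$, the term is $\iota(i)$ times the expected number of later elements $s>i+1$ that fall in the same block as position $i+1$. Given that position $i+1$ lies in block $k$, this equals the number of elements of block $k$ after it.

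We then compute this count via a continuous-time coupling. Assign each coordinate an i.i.d.\ reveal level $U_j\sim\mathrm{Unif}[0,1]$, and place $j$ in block $k$ iff $U_j\in(\beta_{k-1},\beta_k]$. This reproduces $\nu^\beta$, since the conditional reveal probability given that $j$ was not yet revealed is $q_k$. Order the coordinates by $U$, breaking ties within blocks uniformly. The pair structure then becomes: condition on the coordinates with smaller $U$, and take a pair $a,b$ with $U_a<U_b$ in the same block.

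Summing over ordered pairs $(a,b)$, the contribution is $N(N-1)$ times
\[
\E\Bigl[\mathbf 1\{U_a<U_b,\ \text{same block}\}\, I\bigl(X^a;X^b\mid X^{\{j:U_j<U_a\}}\bigr)\Bigr].
\]
Hold the uniform random pair fixed and condition on $U_a=u$ in block $k$. The others are revealed independently with probability $u$, so the expected conditional mutual information is $\E[\iota(B_u)]=\rho(u)/(N-1)$. The probability that $U_b\in(u,\beta_k]$ is $\beta_k-u$. Integrating over $u$ gives
\[
N(N-1)\sum_k\int_{\beta_{k-1}}^{\beta_k}(\beta_k-u)\frac{\rho(u)}{N-1}\,du,
\]
which is the claim (after dividing the pair count correctly: unordered pairs with a distinguished conditioning element).

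The main obstacle is the bookkeeping in the first step. Choosing the internal order of a block by the continuous levels $U$ must agree with the chain-rule expansion, which holds for any order. We must also verify that the conditioning set for pair $(a,b)$ is exactly $\{j:U_j<U_a\}$, including earlier coordinates of the same block. The factor $N$ versus $N(N-1)/2$ must be matched so that the normalization $(N-1)$ in the definition of $\rho$ gives exactly $N$. I would carefully check this on $N=2$, $K=1$: there $\efact=I(X^1;X^2)=\iota(0)$, and the formula gives $2\int_0^1(1-u)\iota(0)\,du=\iota(0)$, which confirms the normalization.
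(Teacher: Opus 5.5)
Your proposal is correct. Its final computation takes a different route from the paper's.

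\textbf{How the two routes differ.} The paper invokes the position-indexed representation of \citet{lavenant2025error} and \citet{chen2025optimal}, namely $\efact=\E[\sum_i\iota(i)(r_s(i+1)-(i+1))]$. It then computes each expected overshoot separately. It conditions on the order statistic $U_{(i)}\sim\mathrm{Beta}(i,N-i+1)$ and notes that the overshoot is conditionally $\mathrm{Bin}(N-i,(F(u)-u)/(1-u))$. A Beta-function identity then converts the result into $N\sum_k\int_{\beta_{k-1}}^{\beta_k}(\beta_k-u)b_{i-1}(u)\,du$, which is summed over $i$.

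You instead keep coordinate labels and condition on the level $U_a=u$ of one coordinate, which has density one, so no order statistics are needed.
\begin{itemize}
\item The partner $b$ lands later in the same block with probability $\beta_k-u$.
\item On that event the conditioning set $\{j:U_j<u\}$ is a Bernoulli$(u)$ thinning $T$ of the other $N-2$ coordinates, independent of $U_b$.
\item Averaging over a uniform ordered pair gives $\E[\iota(B_u)]=\rho(u)/(N-1)$ directly, by \eqref{eq:rho-binomial}. This holds because, given $|T|=i$, the triple $(a,b,T)$ has the law of $(\sigma_{i+1},\sigma_{i+2},\sigma_{\le i})$.
\end{itemize}

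\textbf{What each route buys.} Your route is shorter and explains the formula's structure. The reveal-level thinning is why $\rho$ is a binomial (Bernstein) mixture in $u$, and the weight $\beta_k-u$ is simply the chance that the partner follows in the same block. The paper's route reduces the theorem to an already published lemma plus a one-dimensional overshoot computation. It also exhibits each coefficient's schedule weight as an expected overshoot, which is the form reused in Proposition~\ref{prop:stability}.

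\textbf{Bookkeeping remarks.}
\begin{itemize}
\item Your middle paragraph, about the uniform permutation $\sigma$ and the positional count, is not used by the coordinate-pair computation. That computation needs only two facts: the pointwise chain-rule expansion in the $U$-order, which holds for any within-block order, and that the uniform coupling has law $\nu^\beta$.
\item No further division is needed at the end. The indicator $\{U_a<U_b\}$ already selects one orientation of each unordered pair, so $N(N-1)\cdot\rho(u)/(N-1)$ gives exactly the factor $N$, as your $N=2$ check confirms.
\item Ties among the $U_j$ have probability zero, so no tie-breaking rule is needed.
\end{itemize}
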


This representation yields a recursive formula for the optimal schedule $\beta$ that minimizes the factorization error. 

\begin{corollary}\label{cor:recursive}
If $\pi$ is not a product measure, every minimizer of $\varepsilon_\text{fact}(\beta)$ over the simplex
\[
\{(\beta_1,\ldots,\beta_{K-1}):0\le\beta_1\le\cdots\le\beta_{K-1}\le1\}
\]
lies in its interior.
Moreover every minimizer satisfies the recursion
\begin{equation}\label{eq:recursive formula}
\beta_{k+1}
=
\beta_k+\Psi(\beta_k,\beta_k-\beta_{k-1}),
\quad 
\Psi(b,\delta)
:=
\frac{1}{\rho(b)}
\int_{b-\delta}^b \rho(u)\,du 
\quad k=1,\dots,K-1.        
\end{equation}
If $b\mapsto\Psi(b,\delta)$ is nondecreasing for every fixed $\delta$, then the minimizer is unique. 

If $\pi$ is a product measure, then
$\efact(\beta)=0$ for every schedule, so every schedule is optimal.
\end{corollary}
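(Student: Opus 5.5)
The plan is to work directly with the representation of Theorem~\ref{thm:main},
\[
F(\beta_1,\dots,\beta_{K-1}):=\efact(\beta)=N\sum_{k=1}^K\int_{\beta_{k-1}}^{\beta_k}(\beta_k-u)\rho(u)\,du ,
\]
with $\beta_0=0$ and $\beta_K=1$ fixed. The product-measure case is immediate: Lemma~\ref{lem:product-measure} gives $\rho\equiv0$, so $F\equiv0$.

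Assume from now on that $\pi$ is not a product measure. By Lemma~\ref{lem:product-measure}, $\rho>0$ on $(0,1)$. Since $\rho$ is a polynomial, $F$ is continuous (indeed smooth) on the compact simplex, so minimizers exist.

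\textbf{Step 1: minimizers are interior.} The simplex boundary is exactly where two consecutive points coincide, $\beta_{k-1}=\beta_k$ for some $k\in\{1,\dots,K\}$; this includes $\beta_1=0$ and $\beta_{K-1}=1$. The key computation is the gain from splitting one block. For $a<c<b$ in $[0,1]$,
\[
\int_a^b(b-u)\rho\,du-\Bigl[\int_a^c(c-u)\rho\,du+\int_c^b(b-u)\rho\,du\Bigr]=(b-c)\int_a^c\rho\,du>0 ,
\]
and the inequality is strict because $\rho>0$ on $(0,1)$. Now take a boundary point. A degenerate block contributes zero to $F$, so deleting the repeated point leaves $F$ unchanged. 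Since $\beta_0=0<1=\beta_K$, at least one remaining block $(a,b)$ has $a<b$. Inserting the freed point at some $c\in(a,b)$ and re-sorting gives an admissible schedule with strictly smaller $F$. Hence no boundary point is a minimizer.

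\textbf{Step 2: stationarity.} At an interior minimizer, $\nabla F=0$. The variable $\beta_k$, for $1\le k\le K-1$, enters only blocks $k$ and $k+1$. Differentiating under the integral (the boundary terms vanish because the integrand $\beta_k-u$ is zero at $u=\beta_k$) gives
\[
\partial_{\beta_k}F=N\Bigl[\int_{\beta_{k-1}}^{\beta_k}\rho(u)\,du-(\beta_{k+1}-\beta_k)\rho(\beta_k)\Bigr].
\]
Setting this to zero and dividing by $\rho(\beta_k)>0$ (valid since $\beta_k\in(0,1)$) yields exactly \eqref{eq:recursive formula}.

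\textbf{Step 3: uniqueness under monotonicity.} The recursion shows that $\beta_1$ determines the whole sequence, because $\beta_0=0$ is fixed. I will compare two minimizers $\beta$ and $\beta'$ with $\beta_1<\beta_1'$ and derive a contradiction. Write $\delta_k:=\beta_k-\beta_{k-1}$ and $\delta_k'$ likewise. The claim, proved by induction on $k$, is that $\beta_k<\beta_k'$ and $\delta_k<\delta_k'$; the base case holds because $\delta_1=\beta_1$. The inductive step uses two properties of $\Psi$:
\begin{itemize}
\item $\Psi(b,\cdot)$ is strictly increasing, since $\rho>0$ on $(0,1)$;
\item $\Psi(\cdot,\delta)$ is nondecreasing, by hypothesis.
\end{itemize}
Together they give
\[
\delta_{k+1}=\Psi(\beta_k,\delta_k)\le\Psi(\beta_k',\delta_k)<\Psi(\beta_k',\delta_k')=\delta_{k+1}' ,
\]
and adding this to $\beta_k<\beta_k'$ gives $\beta_{k+1}<\beta_{k+1}'$. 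Both sequences are genuine minimizers, so every $\beta_k$ lies in $(0,1)$ and all evaluations of $\Psi$ are legitimate. At $k=K$ the induction gives $\beta_K<\beta_K'$, contradicting $\beta_K=\beta_K'=1$. Hence $\beta_1=\beta_1'$, and the recursion forces $\beta=\beta'$.

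The step I expect to be most delicate is Step 1. One must handle several coincidences at once and make sure the relabelled schedule is still admissible. The repeated deletion-and-insertion argument does both, since each move strictly decreases $F$ or, when deleting a degenerate block, leaves it unchanged.
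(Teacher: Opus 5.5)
Your proposal is correct and follows essentially the same route as the paper's proof: the block-splitting identity to rule out collapsed intervals, the first-order condition in each $\beta_k$, and the shooting induction on $\beta_1$ that combines strict monotonicity of $\Psi$ in $\delta$ with the assumed monotonicity in $b$. A single delete-and-insert move already gives a strict decrease at any boundary point, so the repeated version you mention at the end is not needed.
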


\begin{remark}[On uniqueness]\label{rem:uniqueness}
The monotonicity condition of Corollary~\ref{cor:recursive} holds, for example, whenever $\log\rho$ is concave.

The recursion \eqref{eq:recursive formula} determines the entire schedule from the initial value $\beta_1$ and stationary schedules correspond to the values of $\beta_1$ for which the recursion terminates exactly at $\beta_K=1$. 
Under the monotonicity assumption the resulting shooting map is monotone and the unique minimizer can be computed by bisection on $\beta_1$.
Without this assumption, the shooting equation may have several roots. In that case, one must identify all relevant stationary schedules, or use a global finite-dimensional optimization procedure, and compare their objective values.
For instance, when $K=2$, the stationary condition reduces to
\[
\int_0^x \rho(u)\,du=(1-x)\rho(x),
\qquad x=\beta_1.
\] 
For the profile $\rho(u)=2+\sin(30u)$,  this becomes
\[
2x+\frac{1-\cos(30x)}{30}-(1-x)(2+\sin(30x))=0,
\]
which has three roots in $(0,1)$; the first-order conditions alone therefore do not identify the minimizer. This $\rho$ is not of the polynomial form \eqref{eq:bernstein_rep} and serves just as an illustration.
\end{remark}

\begin{remark}[Target-agnostic schedules]\label{prop:heuristics}
The optimal schedule requires full knowledge of $\rho$, which is generally unavailable in applications.
It is therefore useful to consider simple target-agnostic schedules adapted to qualitative shapes of $\rho$ such as whether the mass of $\rho$ concentrates near the origin.
    
For example, the geometric schedule $\beta_{k+1}= (1+a)\beta_k$  
studied in Proposition~\ref{prop:boundprevious-work}, formally solves the local optimality recursion for the idealized profile \(\rho(u)\propto u^{-2}\): every stationarity relation holds except the one adjacent to the origin where the profile diverges (the proof is given in Appendix~\ref{app:section:main result}).
No dependence density realizes this profile exactly (by \eqref{eq:bernstein_rep}, $\rho$ is a polynomial, hence bounded) but it captures the behavior of genuine targets such as the exchangeable mixtures of Section~\ref{sec:examples}.
\end{remark}

\subsection{Bounds for target-agnostic schedules}  \label{sec:upper bounds}

Several previous works bound the KL divergence between the algorithm distribution and the data distribution in terms of dependence functionals of $\pi$ \citep{chen2025optimal, lavenant2025error, li2025breaking, dmitriev2026efficient, zhao2026adaptation}.
We show that these quantities admit simple representations in terms of the dependence profile $\rho$.

We use the standard notation $H(\,\cdot\,)$, $H(\,\cdot\mid\cdot\,)$, and
$I(\,\cdot\,;\,\cdot\,)$ for entropy, conditional entropy and mutual information. Throughout, $X=(X^1,\dots,X^N)\sim\pi$ and $X^{-i}=(X^j)_{j\neq i}$.

\begin{definition}
The \emph{total correlation} and \emph{dual total correlation} of $\pi$ are defined, respectively, as
\begin{align*}
    TC(\pi)&:=\sum_{i=1}^N H(X^i)-H(X^1,\dots,X^N)\\
DTC(\pi)&:=H(X^1,\dots,X^N)-\sum_{i=1}^N H(X^i\mid X^{-i})\,.
\end{align*}
Also, define their normalized sum
\begin{align*}
    D(\pi):&=\frac{TC(\pi)+DTC(\pi)}{N}=\frac{1}{N}\sum_{i=1}^N I(X^i;X^{-i}).
\end{align*}
\end{definition}

The following Lemma (proved in Appendix \ref{app:proof upper bounds}) relates these quantities to the dependence density.
\begin{lemma}\label{lemma:correlation}
The quantities $D(\pi)$, $TC(\pi)$, and $DTC(\pi)$ can be expressed in terms of $\rho$ as
\begin{equation*}
\mathrm{D}(\pi)=\int_0^1\rho(u)\,du,\qquad \tc(\pi)=N\int_0^1 (1-u)\rho(u)\,du,\qquad
\dtc(\pi)=N\int_0^1u\,\rho(u)\,du.
\end{equation*}
Equivalently, 
\[
D(\pi)=\sum_{i=0}^{N-2}\iota(i), \qquad \tc(\pi)=\sum_{i=0}^{N-2}(N-i-1)\iota(i), \qquad \dtc(\pi)=\sum_{i=0}^{N-2}(i+1)\iota(i);
\]
in particular $D(\pi)\le\min\{\tc(\pi),\dtc(\pi)\}$.
\end{lemma}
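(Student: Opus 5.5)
The plan is to prove the discrete identities first and then pass to the integral forms. Two facts drive everything.

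The first is the Beta integral
\[
\int_0^1 B_i^{N-2}(u)\,du=\frac{1}{N-1}.
\]
Together with
\[
\int_0^1 u\,B_i^{N-2}(u)\,du=\frac{i+1}{(N-1)N},\qquad \int_0^1 (1-u)B_i^{N-2}(u)\,du=\frac{N-1-i}{(N-1)N},
\]
and Definition~\ref{def:dependence-density}, it immediately gives the equivalence of the integral and sum expressions. For instance,
\[
N\int_0^1 u\rho(u)\,du=N(N-1)\sum_i \iota(i)\,\frac{i+1}{(N-1)N}=\sum_i(i+1)\iota(i).
\]
So it suffices to prove the three sum identities.

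The second fact is a chain-rule expansion along a uniform random permutation $\sigma$. Define the single-coordinate profile
\[
a(j):=\E_\sigma\, I(X^{\sigma_{j+1}};X^{\sigma_{\le j}}), \qquad j=0,\dots,N-1,
\]
so that $a(0)=0$.

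\emph{Step 1: $a(j)$ as a telescoping sum of $\iota$.} Applying the chain rule to $I(X^{\sigma_{j+1}};X^{\sigma_{\le j}})$, revealing $\sigma_1,\dots,\sigma_j$ in order, gives
\[
I(X^{\sigma_{j+1}};X^{\sigma_{\le j}})=\sum_{m=0}^{j-1} I(X^{\sigma_{j+1}};X^{\sigma_{m+1}}\mid X^{\sigma_{\le m}}).
\]
By exchangeability of $\sigma$, the pair $(\sigma_{m+1},\sigma_{j+1})$ given $\sigma_{\le m}$ is distributed like $(\sigma_{m+1},\sigma_{m+2})$. Hence each term has expectation $\iota(m)$, and
\[
a(j)=\sum_{m<j}\iota(m).
\]

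\emph{Step 2: express $\tc$, $\dtc$ and $D$ through $a$.} For $\tc$, write $H(X)=\sum_j H(X^{\sigma_{j+1}}\mid X^{\sigma_{\le j}})$ for each fixed $\sigma$ and average over $\sigma$:
\[
\tc=\sum_{j=0}^{N-1}a(j)=\sum_m (N-1-m)\iota(m).
\]
For $\dtc$, first note that $I(X^i;X^{-i})=H(X^i)-H(X^i\mid X^{-i})$. Summing over $i$, using $\sum_i H(X^i)=H(X)+\tc$, gives
\[
\tc+\dtc=\sum_i I(X^i;X^{-i}).
\]
By symmetry, $\frac1N\sum_i I(X^i;X^{-i})=a(N-1)=\sum_{m=0}^{N-2}\iota(m)$, which is the formula for $D$. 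Then
\[
\dtc=N a(N-1)-\tc=\sum_m\bigl(N-(N-1-m)\bigr)\iota(m)=\sum_m (m+1)\iota(m).
\]

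\emph{Step 3: the inequality.} Since $\iota\ge0$ and $1\le\min(N-1-m,\,m+1)$ for $0\le m\le N-2$, we get $D\le\min\{\tc,\dtc\}$.

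The main point needing care is the exchangeability argument in Step 1. Conditioning on the set $\sigma_{\le m}$, the ordered pair $(\sigma_{m+1},\sigma_{j+1})$ is a uniformly random ordered pair of distinct elements of the complement, exactly as $(\sigma_{m+1},\sigma_{m+2})$ is. Mutual information is symmetric, so the order within the pair does not matter.
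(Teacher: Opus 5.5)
Your proposal is correct and follows essentially the same route as the paper: your profile $a(j)$ is exactly $f(j)-f(0)$, and the paper likewise obtains $D=f(N-1)-f(0)$, $\tc=\sum_j\bigl(f(j)-f(0)\bigr)$, $\dtc=ND-\tc$, and the integral forms from the Beta moments of $B_i^{N-2}$. The only difference is that you derive the telescoping identity $a(j)=\sum_{m<j}\iota(m)$ directly from the mutual-information chain rule and the exchangeability of $\sigma$, whereas the paper invokes the increment identity $\iota(i)=f(i+1)-f(i)$ from \eqref{eq:cmi-increment}.
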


As a consequence of Theorem~\ref{thm:main} and the decomposition \eqref{eq:kl-decomposition}, we obtain a universal upper bound on the terminal KL divergence $\kl{\pi}{\palg}$ in terms of $D(\pi)$. 
Thanks to the framework of Algorithm \ref{alg:tau-leaping2}, the following result can be compared not only with the works we cited above, but also with results obtained from a CTMC perspective, namely \citet[Theorem 3.1.1]{conforti2025non} and \citet[Theorem 3]{dmitriev2026efficient}. 
In contrast to path-space CTMC analyses, this bound is stated directly at the level of the terminal distribution and separates the learning and factorization terms appearing in \eqref{eq:kl-decomposition}.

\begin{proposition}\label{prop:uniform-bound}
For any schedule $\beta$, writing $\Delta\beta_{\max}=\max_k\Delta\beta_k$,
\begin{align}\label{D beta_max}
    \kl{\pi(x)}{\palg(x)}
&\le
N\Delta\beta_{\max}D(\pi)+
\varepsilon_{\mathrm{learn}}
.
\end{align}
For the constant step size $\Delta\beta_k=1/K$ this becomes 
\[
\kl{\pi(x)}{\palg(x)}\le\frac{ND(\pi)}{K}+
\varepsilon_{\mathrm{learn}}
.
\]
\end{proposition}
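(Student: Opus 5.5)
We prove Proposition~\ref{prop:uniform-bound} by combining the KL decomposition \eqref{eq:kl-decomposition} with the exact representation of Theorem~\ref{thm:main} and the identity $D(\pi)=\int_0^1\rho(u)\,du$ from Lemma~\ref{lemma:correlation}.

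\emph{Step 1: reduce to bounding $\efact$.} By \eqref{eq:kl-decomposition},
\[
\kl{\pi(x)}{\palg(x)}\le \varepsilon_{\mathrm{learn}}+\efact(\beta).
\]
So it suffices to show $\efact(\beta)\le N\Delta\beta_{\max}D(\pi)$.

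\emph{Step 2: bound each summand of Theorem~\ref{thm:main}.} On the $k$-th interval $u\in[\beta_{k-1},\beta_k]$, the weight satisfies
\[
0\le \beta_k-u\le \beta_k-\beta_{k-1}=\Delta\beta_k\le\Delta\beta_{\max}.
\]
Since $\rho\ge0$ (the $\iota(i)$ are conditional mutual informations, hence nonnegative, and the Bernstein polynomials are nonnegative on $[0,1]$), we obtain
\[
\int_{\beta_{k-1}}^{\beta_k}(\beta_k-u)\rho(u)\,du\le \Delta\beta_{\max}\int_{\beta_{k-1}}^{\beta_k}\rho(u)\,du .
\]

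\emph{Step 3: sum over $k$.} The intervals $[\beta_{k-1},\beta_k]$ tile $[0,1]$ because $\beta_0=0$ and $\beta_K=1$. Summing the bounds from Step 2 gives
\[
\efact(\beta)\le N\Delta\beta_{\max}\int_0^1\rho(u)\,du=N\Delta\beta_{\max}D(\pi),
\]
by Lemma~\ref{lemma:correlation}.

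\emph{Step 4: the uniform schedule.} For constant step size $\Delta\beta_k=1/K$ we have $\Delta\beta_{\max}=1/K$, and substituting into the bound of Step 3 yields the second display.

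There is no substantive obstacle: the work has already been done in Theorem~\ref{thm:main} and Lemma~\ref{lemma:correlation}. The only points to check are the nonnegativity of $\rho$, which is needed to pull out the supremum of the weight, and the fact that the grid covers $[0,1]$ exactly.
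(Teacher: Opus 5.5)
Your proposal is correct and is essentially the paper's proof. You combine the KL decomposition with Theorem~\ref{thm:main}, bound the weight $\beta_k-u$ by $\Delta\beta_{\max}$ on each interval, and use $\rho\ge0$ together with $\int_0^1\rho=D(\pi)$ from Lemma~\ref{lemma:correlation}. You also state explicitly the nonnegativity of $\rho$, which the paper uses only implicitly.
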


A bound independent of the target distribution follows from the universal estimate $D(\pi)\le\log|\x|$, which holds since $I(X^i;X^{-i})\le H(X^i)\le\log|\x|$ for every $i$.

\begin{remark}
Since  $\E[s_k]=N\Delta\beta_k$, the factor in \eqref{D beta_max} is the maximum expected block size, $N\Delta\beta_{\max}=\max_k\mathbb E[s_k]$. 
The bounds of \citet[Theorem 7]{lavenant2025error} and \citet[Theorem 1]{li2025breaking} instead carry the expected maximum block size, in the form $(\E[\max_k s_k]-1)D(\pi)$. Since by Jensen $\max_k\E[s_k]\le\E[\max_k s_k]$, the leading factor in \eqref{D beta_max} is sharper when $\E[\max_k s_k]\ge\max_k\E[s_k]+1$, that is, whenever the block sizes fluctuate by at least one token on average. 
\end{remark}

Whereas \eqref{D beta_max} holds for an arbitrary schedule, one can also derive guarantees tailored to specific prescribed schedules, in the spirit of \citet[Theorem 1.9]{chen2025optimal} and \citet[Corollary 2]{dmitriev2026efficient}, by working directly with the decomposition of Equation~\eqref{eq:Nfact} in the proof of Theorem~\ref{thm:main}.

\begin{proposition} \label{prop:boundprevious-work}
Let $a>0$ with $a/N\in(0,1)$, and let $K:= 1+\left\lceil\tfrac{\log(N/a)}{\log(1+a)}\right\rceil$.
Consider the geometric schedule 
\[
\beta_0=0,\qquad \beta_k=\min\{1,\,(1+a)^{k-1}a/N\},\qquad k=1,\dots,K,
\]
for which $\beta_K=1$. Then
\[
\varepsilon_{\mathrm{fact}}\le a\bigl(\dtc(\pi)+D(\pi)\bigr)\le 2a\,\dtc(\pi).
\]
Symmetrically, consider the reverse geometric schedule, whose residual distance to the endpoint decays geometrically,
\[
\beta_K=1,\qquad 1-\beta_k=\min\{1,\,(1+a)^{K-1-k}a/N\},\qquad k=0,\dots,K-1,
\]
for which $\beta_0=0$ and $1-\beta_{K-1}=a/N$. Then
\[
\varepsilon_{\mathrm{fact}}\le a\bigl(\tc(\pi)+D(\pi)\bigr) \le 2a\,\tc(\pi).
\]
\end{proposition}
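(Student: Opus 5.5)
The plan is to plug both schedules into the exact representation of Theorem~\ref{thm:main},
\[
\efact(\beta)=N\sum_{k=1}^K\int_{\beta_{k-1}}^{\beta_k}(\beta_k-u)\rho(u)\,du,
\]
and bound the weight $\beta_k-u$ on each interval. The first block gets the crude bound $a/N$, and every later block gets a bound proportional to $u$ (resp.\ $1-u$). Integrating against $\rho\ge0$ then turns the sum into $a\,D(\pi)$ plus $a\,\dtc(\pi)$ (resp.\ $a\,\tc(\pi)$), via the integral formulas of Lemma~\ref{lemma:correlation}. The second inequality $a(\dtc+D)\le 2a\,\dtc$ (and similarly for $\tc$) is immediate from $D(\pi)\le\min\{\tc(\pi),\dtc(\pi)\}$ in the same lemma.

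\textbf{Geometric schedule.} First I check admissibility. The choice of $K$ gives $(1+a)^{K-1}a/N\ge1$, so $\beta_K=1$ and the sequence is nondecreasing. Once the cap at $1$ is reached, later intervals are empty and contribute nothing.

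For $k=1$ the interval is $[0,a/N]$ and $\beta_1-u\le a/N$. Hence the first term is at most
\[
N\cdot\frac aN\int_0^{\beta_1}\rho\le a\int_0^1\rho=a\,D(\pi).
\]
For $k\ge2$ I use $\beta_k\le(1+a)\beta_{k-1}$, which survives the truncation by $\min\{1,\cdot\}$. For $u\in[\beta_{k-1},\beta_k]$ this gives
\[
\beta_k-u\le\beta_k-\beta_{k-1}\le a\beta_{k-1}\le a u.
\]
Summing over $k\ge2$ yields at most $Na\int_{\beta_1}^1u\rho(u)\,du\le a\,\dtc(\pi)$. Adding the two pieces gives the claim.

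\textbf{Reverse geometric schedule.} This is symmetric. For $k=K$ the interval is $[1-a/N,1]$, where $1-u\le a/N$, giving at most $a\,D(\pi)$. For $k<K$ I use $1-\beta_{k-1}\le(1+a)(1-\beta_k)$, again valid after truncation. For $u\le\beta_k$ this gives
\[
\beta_k-u\le(1-\beta_{k-1})-(1-\beta_k)\le a(1-\beta_k)\le a(1-u).
\]
Summing yields at most $Na\int_0^1(1-u)\rho(u)\,du=a\,\tc(\pi)$.

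The only delicate point I expect is bookkeeping at the truncation: checking that the ratio inequalities $\beta_k\le(1+a)\beta_{k-1}$ and $1-\beta_{k-1}\le(1+a)(1-\beta_k)$ still hold at the index where the $\min$ becomes active. The correct endpoints $\beta_0=0$, $\beta_K=1$ also need confirming, including the reverse case where $1-\beta_0=\min\{1,\cdot\}=1$. Both follow directly from the definition of $K$ and monotonicity of $t\mapsto\min\{1,t\}$.
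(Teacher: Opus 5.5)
Your proposal is correct and matches the paper's proof essentially step for step: you split off the first (resp.\ last) block, bound the weight there by $a/N$ to get $a\,D(\pi)$, use the one-step ratio inequalities $\beta_k\le(1+a)\beta_{k-1}$ and $1-\beta_{k-1}\le(1+a)(1-\beta_k)$ to bound the weight by $au$ (resp.\ $a(1-u)$) on the remaining blocks, and conclude with Lemma~\ref{lemma:correlation}. As a minor remark on your truncation bookkeeping, the choice of $K$ gives $(1+a)^{K-2}a/N<1$, so the cap is reached only at $k=K$ and no interval is actually empty.
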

The proof is in Appendix \ref{app:section:main result}

\begin{remark}
For the geometric schedule, $\varepsilon_{\mathrm{fact}}\le\varepsilon$ is guaranteed by the choice $a=\varepsilon\,(2\,\dtc(\pi))^{-1}$, assuming $a\le1$. Since the number of blocks satisfies $K\le 1+\left\lceil \log(N/a)/\log(1+a)\right\rceil$, for $a\le1$ we have $K\lesssim a^{-1}\log(N/a)$, and therefore
\[
K\lesssim \frac{\dtc(\pi)}{\varepsilon}\log\frac{N\,\dtc(\pi)}{\varepsilon}.
\]

\end{remark}

\section{Asymptotic limit}\label{sec:asymptotic}
In this section, we study sequences of distributions $\{\pi_N\}_{N\geq 1}$ whose dependence densities $\rho_{N}$ converge uniformly to a limiting profile $g$. 
We derive a leading-order expression for $\efact$, identify the schedule minimizing the resulting variational functional, and quantify the excess cost of tau-leaping relative to a planner with deterministic block sizes following the same limiting schedule.
The proofs can be found in Appendix \ref{proof:asymptotic}.
Throughout, $\rho_N$ and $\iota_N$ denote the dependence density and the conditional mutual information profile of $\pi_N$, respectively.

\begin{assumption}\label{ass:G_N-asymp}
For a sequence $\{\pi_N\}_{N\ge 2}$ there exists a continuous function $g:[0,1]\to\mathbb{R}_+$ such that
$\|\rho_N-g\|_\infty
\underset{N\to\infty}{\longrightarrow}0$.
\end{assumption}
Let $D_\infty:=\int_0^1g(u)\,du$. Assumption~\ref{ass:G_N-asymp} and Lemma~\ref{lemma:correlation} imply $D(\pi_N)\to D_\infty$.

\begin{lemma}\label{lem:uniform-rho}
Suppose there exists a continuous function $g:[0,1]\to\mathbb{R}_+$ such that, for $N\ge3$,
\begin{equation}\label{eq:coefficient-convergence}
\max_{0\le j\le N-2}\left|(N-1)\iota_{N}(j)-g\!\left(\frac{j}{N-2}\right)\right|
\underset{N\to\infty}{\longrightarrow}0.
\end{equation}
Then $\|\rho_N(u)-g(u)\|_\infty\to0$ as $N\to\infty$.
\end{lemma}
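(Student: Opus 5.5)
The plan is to write $\rho_N$ as a Bernstein approximation of $g$ plus a perturbation, and to control the two pieces separately. Let $n:=N-2$ and define the Bernstein operator $(B_n h)(u):=\sum_{j=0}^{n} h(j/n)B_j^{n}(u)$. By Definition~\ref{def:dependence-density},
\[
\rho_N(u)=\sum_{j=0}^{n}(N-1)\iota_N(j)B_j^{n}(u),
\]
so that
\[
\rho_N(u)-g(u)=\sum_{j=0}^{n}\Bigl((N-1)\iota_N(j)-g\bigl(\tfrac{j}{n}\bigr)\Bigr)B_j^{n}(u)+\bigl((B_n g)(u)-g(u)\bigr).
\]
The aim is to show that both terms on the right tend to zero uniformly in $u\in[0,1]$.

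For the first term, use that the Bernstein basis polynomials are nonnegative and sum to one on $[0,1]$; this is the binomial theorem, equivalently the probabilistic representation \eqref{eq:rho-binomial}. Hence the absolute value of the first term is at most
\[
\max_{0\le j\le n}\bigl|(N-1)\iota_N(j)-g(j/n)\bigr|\sum_j B_j^n(u)=\max_{0\le j\le n}\bigl|(N-1)\iota_N(j)-g(j/n)\bigr|,
\]
uniformly in $u$. By hypothesis \eqref{eq:coefficient-convergence} this tends to zero.

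For the second term, invoke the classical Bernstein approximation theorem: for continuous $g$ on the compact interval $[0,1]$, $\|B_ng-g\|_\infty\to0$ as $n\to\infty$. If a self-contained argument is wanted, it is the standard one. Write $(B_ng)(u)-g(u)=\E[g(B_u/n)-g(u)]$ with $B_u\sim\mathrm{Bin}(n,u)$. Fix $\varepsilon$ and use uniform continuity of $g$ to choose $\delta$. On the event $|B_u/n-u|<\delta$ the integrand is at most $\varepsilon$. On the complementary event, bound the integrand by $2\|g\|_\infty$ and its probability via Chebyshev, using $\var(B_u/n)=u(1-u)/n\le 1/(4n)$. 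The resulting bound is uniform in $u$.

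Combining the two estimates with the triangle inequality gives $\|\rho_N-g\|_\infty\to0$, since $n=N-2\to\infty$. No step is a genuine obstacle here. The only point requiring care is the uniformity in $u$ of the Bernstein approximation, and the Chebyshev variance bound $1/(4n)$ handles it.
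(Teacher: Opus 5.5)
Your proposal is correct and follows essentially the same route as the paper. The paper also splits $\rho_N-g$ into the coefficient perturbation, bounded uniformly via the partition of unity of the Bernstein basis, and the Bernstein approximation error $\|G_N-g\|_\infty\to0$, then combines them with the triangle inequality; your Chebyshev sketch of Bernstein's theorem is an optional self-contained addition.
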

The proof combines the Bernstein representation \eqref{eq:bernstein_rep} with Bernstein's approximation theorem. 

\begin{theorem}\label{thm:asym}
Let $\{\pi_N\}_{N\ge 2}$ satisfy Assumption \ref{ass:G_N-asymp} and let $\beta\in\mathcal{C}^1([0,1])$ be strictly increasing with $\beta(0)=0$ and $\beta(1)=1$. For each $N,K$, consider the schedule $\beta_k=\beta(k/K)$, $k=0,\dots,K$, and write $\varepsilon^{N,K}_\text{fact}(\beta)$ for the corresponding factorization error. Then, as $N,K\to\infty$ jointly and with no relation required between them,
\begin{equation}\label{eq:asympt epsilon}
\varepsilon^{N,K}_\text{fact}(\beta)
=
\frac{N}{2K}\int_0^1 g(\beta(t))\,\beta'(t)^2\,dt
+o\!\left(\frac{N}{K}\right).
\end{equation}
\end{theorem}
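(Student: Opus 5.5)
The plan is to start from the exact representation of Theorem~\ref{thm:main} with $\beta_k=\beta(k/K)$ and handle the two sources of error separately: replacing $\rho_N$ by $g$, and replacing each Riemann-type block integral by its local quadratic approximation. Write
\[
\varepsilon^{N,K}_\text{fact}(\beta)=N\sum_{k=1}^K\int_{\beta_{k-1}}^{\beta_k}(\beta_k-u)g(u)\,du+N\sum_{k=1}^K\int_{\beta_{k-1}}^{\beta_k}(\beta_k-u)\bigl(\rho_N(u)-g(u)\bigr)\,du .
\]

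\textbf{Uniform-approximation error.} The second sum is bounded in absolute value by
\[
N\|\rho_N-g\|_\infty\sum_k\tfrac12(\Delta\beta_k)^2\le \tfrac{N}{2}\|\rho_N-g\|_\infty\,\max_k\Delta\beta_k .
\]
Since $\beta\in\mathcal C^1$, we have $\Delta\beta_k\le\|\beta'\|_\infty/K$. This term is therefore at most $\tfrac{N}{2K}\|\beta'\|_\infty\|\rho_N-g\|_\infty=o(N/K)$ by Assumption~\ref{ass:G_N-asymp}. No relation between $N$ and $K$ is needed here, since the bound factorizes as $(N/K)\cdot o_N(1)$.

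\textbf{Main term.} Define
\[
S_K:=K\sum_{k=1}^K\int_{\beta_{k-1}}^{\beta_k}(\beta_k-u)g(u)\,du,
\]
which depends only on $K$. We must show $S_K\to\tfrac12\int_0^1 g(\beta(t))\beta'(t)^2dt$ as $K\to\infty$. Substitute $u=\beta(s)$ on $[t_{k-1},t_k]$, with $t_k=k/K$. By uniform continuity of $g$ on $[0,1]$, each block integral equals
\[
g(\beta_k)\tfrac12(\Delta\beta_k)^2+\omega_g\bigl(\|\beta'\|_\infty/K\bigr)\,O\bigl((\Delta\beta_k)^2\bigr),
\]
where $\omega_g$ is the modulus of continuity of $g$. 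By the mean value theorem, $\Delta\beta_k=\beta'(\xi_k)/K$ for some $\xi_k\in[t_{k-1},t_k]$. Hence
\[
S_K=\frac{1}{2K}\sum_k g(\beta(t_k))\beta'(\xi_k)^2+o(1).
\]
This is a Riemann sum with tags $t_k$ and $\xi_k$ lying in the same cell for the continuous integrand $g(\beta(t))\beta'(t)^2$. By uniform continuity of $g\circ\beta$ and $\beta'$ on $[0,1]$, it converges to $\tfrac12\int_0^1 g(\beta(t))\beta'(t)^2dt$.

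Combining the two estimates, $\varepsilon^{N,K}_\text{fact}=\tfrac{N}{K}S_K+o(N/K)$, and $S_K$ tends to the claimed constant, which gives \eqref{eq:asympt epsilon}.

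The only delicate point is the joint limit. I expect it to be harmless, because the errors separate into $o_N(1)\cdot N/K$ and $N/K\cdot o_K(1)$, each uniform in the other parameter. It is also why I first replace $\rho_N$ by the fixed $g$ before doing any Riemann-sum analysis, rather than working with $\rho_N$ directly, whose modulus of continuity might degrade with $N$.
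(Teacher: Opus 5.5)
Your proposal is correct and follows essentially the same route as the paper's proof: start from the exact representation of Theorem~\ref{thm:main}, replace $\rho_N$ by $g$ at a cost of $\tfrac{N}{2}\|\rho_N-g\|_\infty\sum_k(\Delta\beta_k)^2=o(N/K)$, freeze $g$ on each block via its modulus of continuity, and pass to a Riemann-sum limit using the mean value theorem for $\Delta\beta_k$. The only differences are cosmetic: you freeze $g$ at the right endpoint $\beta_k$ rather than at $\beta_{k-1}$, and you handle the mixed tags $(t_k,\xi_k)$ directly by uniform continuity, whereas the paper first replaces $\beta'(\xi_k)$ by $\beta'(t_{k-1})$.
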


Equation \eqref{eq:asympt epsilon} yields a variational  problem over smooth increasing schedules, whose minimizer is explicit.

\begin{corollary}\label{cor:asymptotic-optimal-solution}
 Under the hypotheses of Theorem \ref{thm:asym}, assume $g(u)>0$ for $u\in[0,1]$, the schedule minimizing the leading term in \eqref{eq:asympt epsilon} satisfies
\[
\beta'(t)\propto\frac{1}{\sqrt{g(\beta(t))}},
\quad\text{that is,}
\quad
\beta^*(t)=G^{-1}\bigl(t\,G(1)\bigr),
\quad G(y):=\int_0^y\sqrt{g(u)}\,du,
\]
with corresponding factorization error 
\[
\varepsilon^{N,K}_\text{fact}(\beta^*)
=
\frac{N}{2K}\left(\int_0^1 \sqrt{g(u)}\,du\right)^2
+o\!\left(\frac{N}{K}\right).
\]
\end{corollary}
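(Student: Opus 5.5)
The plan is to treat the leading term of Theorem~\ref{thm:asym} as a functional of the schedule,
\[
J(\beta):=\int_0^1 g(\beta(t))\,\beta'(t)^2\,dt ,
\]
over $\mathcal{C}^1$ strictly increasing $\beta$ with $\beta(0)=0$ and $\beta(1)=1$. We then show that $J$ has the explicit minimizer and value stated in Corollary~\ref{cor:asymptotic-optimal-solution}. The error statement follows by applying Theorem~\ref{thm:asym} to that minimizer.

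The key step is a Cauchy--Schwarz lower bound. Since $\beta$ is increasing and $G$ is $\mathcal{C}^1$ with $G'=\sqrt{g}$, the substitution $u=\beta(t)$ gives
\[
G(1)=\int_0^1\sqrt{g(u)}\,du=\int_0^1\sqrt{g(\beta(t))}\,\beta'(t)\,dt .
\]
Cauchy--Schwarz against the constant function $1$ on $[0,1]$ then yields
\[
G(1)^2\le\int_0^1 g(\beta(t))\beta'(t)^2\,dt = J(\beta).
\]
Equality holds iff $\sqrt{g(\beta(t))}\,\beta'(t)$ is constant in $t$, i.e.\ $\frac{d}{dt}G(\beta(t))=c$. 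The boundary conditions force $G(\beta(t))=tG(1)$, that is $\beta^*(t)=G^{-1}(tG(1))$, and hence $\beta'(t)\propto 1/\sqrt{g(\beta(t))}$. This characterizes the minimizer uniquely within the class and gives $J(\beta^*)=G(1)^2=\bigl(\int_0^1\sqrt{g}\bigr)^2$.

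Next we check that $\beta^*$ is admissible for Theorem~\ref{thm:asym}. Because $g$ is continuous and strictly positive on the compact interval $[0,1]$, we have $g\ge c>0$. So $G$ is $\mathcal{C}^1$ and strictly increasing with derivative bounded away from zero, and its inverse is $\mathcal{C}^1$ by the inverse function theorem. It follows that $\beta^*\in\mathcal{C}^1([0,1])$, it is strictly increasing, and $\beta^*(0)=0$, $\beta^*(1)=1$. Plugging $\beta^*$ into \eqref{eq:asympt epsilon} gives
\[
\varepsilon^{N,K}_{\text{fact}}(\beta^*)=\frac{N}{2K}\Bigl(\int_0^1\sqrt{g}\Bigr)^2+o(N/K).
\]

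The main point needing care is the meaning of "minimizing the leading term". The optimization is over fixed smooth schedules, and the $o(N/K)$ remainder from Theorem~\ref{thm:asym} is not uniform in $\beta$. So the claim should be stated as minimization of $J$, not of $\efact$ itself. The positivity assumption on $g$ is exactly what makes $\beta^*$ a $\mathcal{C}^1$ schedule. Without it, $G^{-1}$ could have infinite slope where $g$ vanishes, and Theorem~\ref{thm:asym} would not directly apply.
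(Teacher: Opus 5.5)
Your proposal is correct and follows essentially the same route as the paper: the change of variable $u=\beta(t)$ followed by Cauchy--Schwarz, the equality case forcing $\sqrt{g(\beta(t))}\,\beta'(t)$ to be constant, and a check that $\beta^*(t)=G^{-1}(t\,G(1))$ is an admissible strictly increasing $\mathcal{C}^1$ schedule before substituting it into \eqref{eq:asympt epsilon}. Your remark that the optimization concerns only the leading functional, because the $o(N/K)$ remainder is not uniform in $\beta$, is a sensible clarification and consistent with the statement's wording.
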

The variational problem and its solution are those of \citet[Prop.~13]{lavenant2025error}; we include the statement for completeness, the novelty here being the identification of the limit \eqref{eq:asympt epsilon} for the tau-leaping sampler.

\begin{remark}\label{remark:optVSlin}
For the linear schedule $\beta(t)=t$, Theorem \ref{thm:asym} gives 
\[
\varepsilon_\text{fact}^\text{lin}=\frac{N}{2K} \int_0^1 g(u)du+o\left(\frac{N}{K}\right).
\]
Under the hypotheses of Corollary~\ref{cor:asymptotic-optimal-solution}, write $\varepsilon_\text{fact}^\text{opt}:=\varepsilon_\text{fact}^{N,K}(\beta^*)$ for the error of its optimal limiting smooth schedule. Then
\[
\frac{\varepsilon_\text{fact}^\text{lin}}{\varepsilon_\text{fact}^\text{opt}}\underset{N,K\to\infty}{\longrightarrow} \frac{\int_0^1 g(u)du}{\left(\int_0^1 \sqrt{g(u)}du\right)^2}\geq 1,
\]
where the inequality follows from Jensen.
Thus, under these hypotheses, optimizing the limiting functional over fixed smooth schedules improves the leading constant but not the $N/K$ scaling.
The same conclusion does not necessarily apply outside this regime, for example when $g$ degenerates as in the exchangeable mixtures case of Section~\ref{sec:examples}.
\end{remark}

\subsection{The cost of random block sizes}

Given a schedule $\beta$, tau-leaping reveals a \emph{random} number of coordinates at each step: as shown in the proof of Theorem~\ref{thm:main}, the joint vector of block sizes follows a multinomial distribution, so that $s_k\sim\mathrm{Bin}(N,\Delta\beta_k)$ marginally.
We compare the asymptotics of Theorem~\ref{thm:asym} with those obtained by \citet{lavenant2025error} for a planner that uses a uniformly random coordinate ordering and deterministic block sizes
\[
s_k^{\mathrm{det}}
=
\lceil N\beta(k/K)\rceil
-
\lceil N\beta((k-1)/K)\rceil,
\qquad k=1,\dots,K.
\]
For the same limiting schedule $\beta$, this comparison quantifies the excess factorization cost caused by the random block sizes of tau-leaping. 

For the comparisons in this subsection, assume the stronger coefficient convergence \eqref{eq:coefficient-convergence} and $D_\infty>0$. These hypothesis ensure that the deterministic limits of \citet{lavenant2025error} apply.

When $N,K\to\infty$ with $N/K\to\infty$, the two factorization costs coincide to first order by Theorem~\ref{thm:asym} and \citet[Theorem~12]{lavenant2025error}.
This is due to the fact that the random block sizes concentrate around their deterministic counterparts
$s_k^{\det}\approx N\Delta\beta_k$: for smooth schedules $\beta$ with $\inf \beta'>0$,  whenever $N\Delta\beta_k\to\infty$ we have 
$
\tfrac{s_k}{N\Delta\beta_k}\longrightarrow 1$ in probability.

The difference becomes visible once the expected block sizes stay bounded. When $N/K\to\bar s\in[1,\infty)$, Theorem~\ref{thm:asym} gives the tau-leaping cost
\[
\efact^{N,K}
=
\frac{\bar s}{2}\int_0^1 g(\beta(t))\,\beta'(t)^2\,dt+o(1),
\]
while, if $\beta'$ has finitely many local extrema, the deterministic limit of \citet[Theorem 15]{lavenant2025error} is
\[
\efact^{\det,N,K}(\beta)
=
\frac{1}{2}\int_0^1 g(\beta(t))\bigl[\bar s\,h_{\bar s}(\beta'(t))-\beta'(t)\bigr]\,dt+o(1).
\]
Here $h_{\bar s}$ is the piecewise-linear interpolation of $u\mapsto u^2$ on the grid 
$\{0,1/\bar s,2/\bar s,\dots\}$, which has the closed form
$h_{\bar s}(u)=u^2+\bar s^{-2}\,\{\bar s u\}\bigl(1-\{\bar s u\}\bigr)$, $\{\bar s u\}:=\bar s u-\lfloor \bar s u\rfloor\in[0,1)$.

The next proposition shows that, under these assumptions, the limiting tau-leaping factorization cost is at least that of the deterministic planner. The limiting gap approaches $\tfrac12 D_\infty$ as $\bar s$ subsequently grows. 
For the linear schedule with $K=N$, the effect of random block sizes is particularly clear: the deterministic planner reveals exactly one coordinate per step and has zero factorization error, whereas the tau-leaping factorization error converges to
$\tfrac12 D_\infty$.

\begin{proposition}\label{prop:comparison-disc}
Assume \eqref{eq:coefficient-convergence}, $D_\infty>0$ and $\beta$ as in Theorem~\ref{thm:asym}, with $\beta'$ having finitely many local extrema. Then, as $N,K\to\infty$ with $N/K\to\bar s\in[1,\infty)$, the gap
$\Delta_{\bar s}:=\lim_{N,K\to\infty}\bigl(\efact^{N,K}(\beta)-\efact^{\det,N,K}(\beta)\bigr)$
exists and equals
\[
\Delta_{\bar s}
=\frac{1}{2}\int_0^1 g(\beta(t))\left[\beta'(t)-\frac{\theta(t)\bigl(1-\theta(t)\bigr)}{\bar s}\right]dt,
\qquad \theta(t):=\{\bar s\,\beta'(t)\} .
\]
Moreover $0\le\Delta_{\bar s}\le\tfrac12 D_\infty$ and
\[
\left|\Delta_{\bar s}-\tfrac12 D_\infty\right|
\ \le\
\frac{1}{8\bar s}\int_0^1 g(\beta(t))\,dt
\ \le\
\frac{\|g\|_\infty}{8\bar s}\ \xrightarrow[\bar s\to\infty]{}\ 0 .
\]
\end{proposition}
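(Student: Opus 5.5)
We take the two limiting expressions displayed just before the statement as given. The first is the tau-leaping limit from Theorem~\ref{thm:asym}, which applies because \eqref{eq:coefficient-convergence} implies Assumption~\ref{ass:G_N-asymp} by Lemma~\ref{lem:uniform-rho}. The second is the deterministic limit of \citet[Theorem 15]{lavenant2025error}. Both limits exist, so the limit of the difference exists and equals the difference of the limits:
\[
\Delta_{\bar s}=\frac12\int_0^1 g(\beta(t))\Bigl[\bar s\,\beta'(t)^2-\bar s\,h_{\bar s}(\beta'(t))+\beta'(t)\Bigr]dt .
\]
Substituting the closed form $h_{\bar s}(u)=u^2+\bar s^{-2}\theta(1-\theta)$ with $\theta=\{\bar s u\}$, the quadratic terms cancel. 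We obtain $\bar s\beta'^2-\bar s h_{\bar s}(\beta')=-\theta(1-\theta)/\bar s$, which is exactly the stated formula for $\Delta_{\bar s}$. The closed form of $h_{\bar s}$ should be verified by checking that $u^2+\bar s^{-2}\{\bar su\}(1-\{\bar su\})$ is affine on each grid cell $[j/\bar s,(j+1)/\bar s]$ and matches $u^2$ at the nodes. This is a routine computation: on the cell write $u=(j+\theta)/\bar s$ and expand.

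For the bounds we use only pointwise inequalities on the bracket $b(t):=\beta'(t)-\theta(t)(1-\theta(t))/\bar s$, together with $g\ge0$. The upper bound $\Delta_{\bar s}\le\frac12 D_\infty$ would follow from $b(t)\le\beta'(t)$ and $\int_0^1 g(\beta(t))\beta'(t)\,dt=\int_0^1 g(u)\,du=D_\infty$, by the change of variables $u=\beta(t)$, which is valid since $\beta$ is a $\mathcal C^1$ increasing bijection of $[0,1]$.

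Nonnegativity needs $\theta(1-\theta)\le \bar s\beta'$. If $\bar s\beta'\ge1$, this holds because $\theta(1-\theta)\le 1/4<1$. If $\bar s\beta'<1$, then $\theta=\bar s\beta'$ and $\theta(1-\theta)\le\theta=\bar s\beta'$. Hence $b\ge0$ pointwise.

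For the quantitative estimate, write $\Delta_{\bar s}-\frac12 D_\infty=-\frac{1}{2\bar s}\int_0^1 g(\beta)\theta(1-\theta)\,dt$. Using $0\le\theta(1-\theta)\le1/4$ gives the bound $\frac{1}{8\bar s}\int_0^1 g(\beta(t))\,dt\le\|g\|_\infty/(8\bar s)$.

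The only genuine subtlety is the existence and justification of the two limits, in particular the applicability of \citet[Theorem 15]{lavenant2025error} under our hypotheses: $\beta'$ must have finitely many extrema, and $D_\infty>0$. The proof therefore hinges on checking that our normalization of $g$ via \eqref{eq:coefficient-convergence} matches theirs. The remaining steps are the algebraic simplification and the elementary pointwise bounds.
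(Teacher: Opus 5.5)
Your proposal is correct and follows the paper's proof essentially step for step: you subtract the two limits, cancel the quadratic terms via the closed form of $h_{\bar s}$, and then bound the bracket pointwise using $\theta(1-\theta)\le 1/4$ and $\int_0^1 g(\beta(t))\beta'(t)\,dt=D_\infty$, plus $\theta(1-\theta)\le\theta\le\bar s\beta'$ for the sign (your case split for nonnegativity is an equivalent but slightly longer route to this last inequality). The normalization issue you flag is settled in the paper in one line: since $D(\pi_N)\to D_\infty>0$, dividing the rescaled increments in \eqref{eq:coefficient-convergence} by $D(\pi_N)$ gives uniform convergence to the normalized profile $g/D_\infty$ required by \citet{lavenant2025error}, and this is exactly where the hypothesis $D_\infty>0$ is used.
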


\section{Examples}\label{sec:examples}

In this section we compute the dependence density $\rho$ for various classes of target distributions $\pi\in \mathcal{P}(\x^N)$. These examples show how the shape of $\rho$ reflects the way dependence is distributed across the coordinates, and how it affects the factorization error. 
We include numerical simulations verifying the theoretical results for the stationary Markov chains and for exchangeable models. All proofs are deferred to Appendix \ref{app:ex proofs}.

The first example is that of product measures.
Since the coordinates are independent, conditioning on the revealed coordinates does not change the conditional law of unrevealed ones, so parallelizing the updates induces no factorization error. 

\begin{proposition}[Product measures]\label{prop:product-example}
Let $\pi=\pi_1\otimes\cdots\otimes\pi_N$.
Then $\rho\equiv0$ on $[0,1]$ and consequently $\varepsilon_{\mathrm{fact}}(\beta)=0$ for every schedule $\beta$.
\end{proposition}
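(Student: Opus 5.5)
The plan is to verify directly that every conditional mutual information entering the definition of $\iota$ vanishes, and then invoke Theorem~\ref{thm:main}. Fix a permutation $\sigma$ and an index $i\in\{0,\dots,N-2\}$, and set $a=\sigma_{i+1}$, $b=\sigma_{i+2}$, $A=\sigma_{\le i}$. Since $\pi=\pi_1\otimes\cdots\otimes\pi_N$, the triple $(X^a,X^b,X^A)$ has product law $\pi_a\otimes\pi_b\otimes\pi_A$, where $\pi_A:=\bigotimes_{j\in A}\pi_j$. Hence, for every $x^A$ with $\pi_A(x^A)>0$, the conditional joint law of $(X^a,X^b)$ given $X^A=x^A$ is $\pi_a\otimes\pi_b$, which is itself a product. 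It follows that
\[
I_\pi(X^a;X^b\mid X^A)=\E\bigl[\kl{\pi(X^a,X^b\mid X^A)}{\pi(X^a\mid X^A)\otimes\pi(X^b\mid X^A)}\bigr]=0 .
\]

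Averaging over $\sigma$ gives $\iota(i)=0$ for every $i$. The Bernstein representation \eqref{eq:bernstein_rep} then yields $\rho\equiv0$ on $[0,1]$. This is the first half of Lemma~\ref{lem:product-measure}, so it could simply be cited instead.

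For the second claim, substituting $\rho\equiv0$ into \eqref{eq:mainresult} makes every integral $\int_{\beta_{k-1}}^{\beta_k}(\beta_k-u)\rho(u)\,du$ vanish, so $\efact(\beta)=0$ for every schedule $\beta$. As a sanity check independent of Theorem~\ref{thm:main}, each summand in \eqref{eq:factorization-error} is identically zero, because $\pi(x^{z_k}\mid x^{z_{<k}})=\prod_{i\in z_k}\pi_i(x^i)=\prod_{i\in z_k}\pi_i(x^i\mid x^{z_{<k}})$ under a product measure.

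There is no real obstacle here. The only point needing care is that conditioning on $X^A$ preserves independence, which holds because the conditional law of the remaining coordinates given $X^A$ is the same product measure for every $x^A$ in the support.
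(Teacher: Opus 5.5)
Your proof is correct and follows the paper's route: show $\iota\equiv0$, deduce $\rho\equiv0$ from \eqref{eq:bernstein_rep}, and substitute into Theorem~\ref{thm:main}; the paper states this case as the degenerate case of Lemma~\ref{lem:product-measure}. The only difference is that the paper gets $\iota\equiv0$ from $\tc(\pi)=\sum_{i}(N-i-1)\iota(i)=\kl{\pi}{\otimes_j\pi_j}=0$ with positive weights and $\iota\ge0$, whereas you check directly that each conditional mutual information vanishes, which is more elementary and avoids Lemma~\ref{lemma:correlation}.
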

This is the degenerate case of Lemma~\ref{lem:product-measure}; every schedule is optimal, and schedule design is vacuous.

Next we consider distributions with local dependence, starting with a stationary Markov chain. Here $\rho$ can be expressed through the mutual informations $I(X^0;X^d)$, capturing how the Markovian dependence changes as the fraction of revealed coordinates varies.

\begin{proposition}[Stationary Markov chains]\label{prop:markov-example}
Let $(X^t)_{t\in\mathbb Z}$ be a stationary time-homogeneous Markov chain on $\x$ with transition matrix $P$ and stationary law $\mu$, and let $\pi_N$ be the law of its restriction $(X^1,\dots,X^N)$, so that
\[
\pi_N(x^1,\dots,x^N)
=
\mu(x^1)\prod_{t=1}^{N-1}P(x^t,x^{t+1}).
\]
Set $h_0:=H(X^0)$, $h_d:=H(X^d\mid X^0)$ and $I_d:=I(X^0;X^d)=h_0-h_d$.
Then 
\begin{equation}\label{eq:markov finite}
\rho_N(u)=\frac{d^2}{du^2}\left[\frac{1}{N}G_N(u)\right], \qquad G_N(u):=\sum_{d=1}^{N-1}
(N-d) I_d\,u^2(1-u)^{d-1}.
\end{equation}
Moreover, if $\sum_{d\ge1}I_d<\infty$, then $\rho_N\to g$
uniformly on $[0,1]$, where
\[
g(u):=\frac{d^2}{du^2}\left[u\,\mathbb E I_{D_u}\right],
\qquad D_u\sim\mathrm{Geom}(u),\qquad u\in(0,1],
\]
and $g$ is extended continuously to $u=0$.
\end{proposition}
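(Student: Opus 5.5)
The plan is to avoid computing $\iota_N(i)$ directly. Instead, I would express $\rho$ as a second derivative of an averaged entropy profile, which is a general identity and not specific to Markov chains. For $u\in[0,1]$ let $R_u\subseteq[N]$ contain each coordinate independently with probability $u$, and set $F(u):=\E\,H(X^{R_u})$.

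\textbf{Step 1: the general identity $\rho=-F''/N$.} Write $f(R):=H(X^R)$. The standard derivative formula for Bernoulli product measures gives
\[
F'(u)=\sum_i\E\bigl[f(R_u\cup\{i\})-f(R_u\setminus\{i\})\bigr],
\]
and iterating once more gives $F''(u)=\sum_{i\neq j}\E[\Delta_i\Delta_j f(R_u)]$. Here the second difference is evaluated on $R=R_u\setminus\{i,j\}$:
\[
\Delta_i\Delta_j f=f(R\cup\{i,j\})-f(R\cup\{i\})-f(R\cup\{j\})+f(R)=-I(X^i;X^j\mid X^R).
\]
The set $R_u\setminus\{i,j\}$ is a $\mathrm{Bin}(N-2,u)$-sized uniform subset of the remaining coordinates. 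Averaging over ordered pairs therefore gives
\[
F''(u)=-N(N-1)\,\E[\iota_N(B_u)]=-N\rho_N(u)
\]
by \eqref{eq:rho-binomial}.

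\textbf{Step 2: compute $F$ for the Markov chain.} Let $R=\{r_1<\dots<r_m\}$. Stationarity and the Markov property give
\[
H(X^R)=h_0+\sum_{j}h_{r_{j+1}-r_j}=m\,h_0-\sum_j I_{r_{j+1}-r_j}.
\]
Take expectations. First, $\E|R_u|=Nu$. Second, a pair $(r,r+d)$ is a consecutive pair of $R_u$ exactly when both endpoints are in $R_u$ and the $d-1$ coordinates between them are not. There are $N-d$ such pairs, each occurring with probability $u^2(1-u)^{d-1}$. Hence
\[
F(u)=Nu\,h_0-G_N(u).
\]
The linear term vanishes under two derivatives, so Step 1 yields $\rho_N=\tfrac{d^2}{du^2}[G_N/N]$, which is \eqref{eq:markov finite}.

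\textbf{Step 3: the limit.} Write
\[
\frac{G_N(u)}{N}=\sum_{d=1}^{N-1}\Bigl(1-\frac dN\Bigr)I_d\,\phi_d(u),\qquad \phi_d(u):=u^2(1-u)^{d-1}.
\]
Since $\sum_d I_d\,u(1-u)^{d-1}=\E I_{D_u}$ for $D_u\sim\mathrm{Geom}(u)$, we have $\sum_{d\ge1}I_d\phi_d(u)=u\,\E I_{D_u}$.

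The key estimate, which I expect to be the main obstacle, is a uniform bound $\sup_{d}\|\phi_d''\|_{\infty}\le C$. Expanding, $\phi_d''$ is a combination of three terms:
\[
(1-u)^{d-1},\qquad (d-1)\,u(1-u)^{d-2},\qquad (d-1)(d-2)\,u^2(1-u)^{d-3}.
\]
I would bound each by maximizing in $u$. For instance, the last term peaks near $u\approx 2/d$, where it equals about $4e^{-2}$ uniformly in $d$; the middle term is handled the same way.

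Given this bound, the Weierstrass M-test with $\sum_d I_d<\infty$ shows that $\sum_d I_d\phi_d''$ converges uniformly on $[0,1]$ to a continuous function $g$. Uniform convergence of the second derivatives, together with convergence of the series itself, justifies identifying $g$ with $\tfrac{d^2}{du^2}[u\,\E I_{D_u}]$ on $(0,1]$. The continuous extension to $u=0$ comes for free, since $g$ is a uniform limit of polynomials.

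Finally, the error satisfies
\[
\|\rho_N-g\|_\infty\le C\sum_{d<N}\frac dN I_d+C\sum_{d\ge N}I_d.
\]
The second sum is a tail of a convergent series. The first tends to $0$ by dominated convergence, since $(d/N)I_d\le I_d$ and each term vanishes as $N\to\infty$.
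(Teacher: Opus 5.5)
Your proposal is correct and follows essentially the same route as the paper: reduce to the identity $\rho_N=-F''/N$ (the paper's Lemma~\ref{lemma:representation}), compute $F(u)=Nh_0u-G_N(u)$ from the chain rule and the Markov property, and pass to the limit using a uniform bound on $\sup_u|\phi_d''(u)|$ (the paper gets $10$) together with summability of $I_d$. The differences are minor: you prove the identity via the Russo--Margulis formula and pointwise second differences rather than by twice differentiating the Bernstein expansion $\sum_m B_m^N(u)\bar H_m$; you count consecutive revealed pairs directly instead of specializing the stationary-process formula; and you use dominated convergence where the paper invokes Kronecker's lemma.
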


\begin{remark}
This case has also been studied by \cite{luxembourg2025plan}.
Their analysis shows that the factorization error incurred by parallel sampling can be controlled by updating coordinates that are sufficiently well separated.
This motivates their dilated unmasking scheme, which uses a logarithmic number of denoising iterations per block. 
\end{remark}

For irreducible and aperiodic finite-state Markov chains, the lag mutual information is summable, so Proposition \ref{prop:markov-example} yields a continuous limiting profile and Theorem \ref{thm:asym} applies. For a non-i.i.d. chain this profile is not identically zero, and every fixed smooth schedule has factorization error of order $N/K$. 
If, in addition, the limiting profile is strictly positive on $[0,1]$, Corollary \ref{cor:asymptotic-optimal-solution} and the subsequent optimal-to-linear comparison of Remark \ref{remark:optVSlin} apply. 

The next result identifies the limiting profile for a stationary process satisfying the uniform convergence assumption, of which the Markov chain is one instance. 
Stationarity relaxes the conditional independence structure of Markov chains while retaining translation invariance, a regime of interest for sequence models whose dependence is far from a Markovian one.

\begin{proposition}[Stationary processes]\label{prop:stationary-example}
Let $(X^t)_{t\in\mathbb Z}$ be a stationary process on $\x$, that is, for every finite set of times $t_1,\dots,t_k$ and every shift $s\in\mathbb{Z}$,
\[
(X^{t_1},\dots,X^{t_k})\overset{d}{=}(X^{t_1+s},\dots,X^{t_k+s}),
\]
and let $\pi_N$ be the law of $(X^1,\dots,X^N)$. 
For a finite $A\subseteq \mathbb N$ write $-A:=\{-a:a\in A\}$ and
\[
h_0:=H(X^0), \qquad h(A):=H(X^0\mid X^{-A}), \qquad 
I(A):=I(X^0;X^{-A})=h_0-h(A),
\]
with $I(\varnothing)=0$. 
Let $\xi_i\overset{iid}{\sim}\mathrm{Bern}(u)$ for 
$i\ge1 $ and 
$S_{d,u}:=\{i\in[d]:\xi_i=1\}$.
Then, for every $N$,
\[
\rho_N(u)
=
\frac{d^2}{du^2}
\left[
\frac{u}{N}
\sum_{d=0}^{N-1}
\mathbb E I(S_{d,u})
\right].
\]
If moreover Assumption~\ref{ass:G_N-asymp} holds, then, with $S_u:=\{i\ge1:\xi_i=1\}$,
\begin{equation}
    \rho_N(u)
\underset{N\to\infty}{\longrightarrow}
\frac{d^2}{du^2}
\left[
u\,\mathbb E I(S_u)
\right].
\end{equation}

\end{proposition}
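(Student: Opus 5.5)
The plan is to write $\rho_N$ as the second derivative of an averaged joint entropy. Stationarity then lets us compute that entropy explicitly through the chain rule.

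\emph{Step 1 (general identity).} For a random set $R_u\subseteq[N]$ containing each coordinate independently with probability $u$, set $\Phi_N(u):=\E\,H(X^{R_u})$. This is a polynomial in $u$.
\begin{itemize}
\item Differentiating coordinatewise (the standard Russo-type formula for product Bernoulli measures) gives
\[
\Phi_N'(u)=\sum_i \E\bigl[H(X^{R\cup\{i\}})-H(X^{R})\bigr]=\sum_i\E\,H(X^i\mid X^{R}),
\]
with $R$ a Bernoulli set on $[N]\setminus\{i\}$.
\item Differentiating once more in the coordinate $j\neq i$ gives
\[
\Phi_N''(u)=\sum_{i\neq j}\E\bigl[H(X^i\mid X^{R})-H(X^i\mid X^{R\cup\{j\}})\bigr]=-\sum_{i\ne j}\E\, I(X^i;X^j\mid X^{R}),
\]
with $R$ a Bernoulli set on the remaining $N-2$ coordinates.
\item Conditionally on $|R|=m$, the triple $(i,j,R)$ is a uniform ordered pair together with a uniform $m$-subset of the rest. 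Averaging therefore gives $N(N-1)\E[\iota(B_u)]$ with $B_u\sim\mathrm{Bin}(N-2,u)$.
\item By \eqref{eq:rho-binomial} this yields $\rho_N(u)=-\Phi_N''(u)/N$.
\end{itemize}
This step uses no structure of $\pi$.

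\emph{Step 2 (stationary computation).} List the elements of $R_u$ in increasing order and apply the chain rule:
\[
H(X^{R_u})=\sum_{i=1}^N \mathbbm 1\{i\in R_u\}\,H\bigl(X^i\mid X^{R_u\cap[i-1]}\bigr).
\]
By stationarity, shifting by $-i$ gives $H(X^i\mid X^{R_u\cap[i-1]})=h(A_i)$, where $A_i:=\{i-r:r\in R_u\cap[i-1]\}\subseteq[i-1]$. The set $A_i$ is independent of $\mathbbm 1\{i\in R_u\}$ and has the law of $S_{i-1,u}$. Hence
\[
\Phi_N(u)=u\sum_{d=0}^{N-1}\E\,h(S_{d,u})=Nuh_0-u\sum_{d=0}^{N-1}\E\, I(S_{d,u}).
\]
The linear term $Nuh_0$ vanishes under $d^2/du^2$, and Step 1 gives the finite-$N$ formula.

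\emph{Step 3 (limit).} Set $F_N(u):=\frac uN\sum_{d<N}\E\, I(S_{d,u})$, a polynomial with $F_N''=\rho_N$.
\begin{itemize}
\item We have $F_N(0)=0$. Differentiating the product $u\cdot(\cdot)$ at $0$ also gives $F_N'(0)=\frac1N\sum_d \E\, I(S_{d,0})=0$, since $S_{d,0}=\varnothing$.
\item Taylor's formula with integral remainder then gives $F_N(u)=\int_0^u(u-v)\rho_N(v)\,dv$.
\item Under Assumption~\ref{ass:G_N-asymp} this converges uniformly to $\int_0^u(u-v)g(v)\,dv$.
\end{itemize}
On the other side, $I(S_{d,u})$ is nondecreasing in $d$, because $S_{d,u}\subseteq S_{d+1,u}$ and mutual information increases with the conditioning family. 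It is bounded by $\log L$. So it converges a.s. to $I(S_u)$, where $I(S_u):=I(X^0;X^{-S_u})$ for the infinite set $S_u$ is understood as this monotone limit; this equals the genuine mutual information by martingale convergence, since the alphabet is finite. By dominated convergence and Cesàro averaging, $F_N(u)\to u\,\E\, I(S_u)$ pointwise.

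Identifying the two limits gives $u\,\E\, I(S_u)=\int_0^u(u-v)g(v)\,dv$. This is $C^2$, with second derivative $g$, which is the claim.

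The main obstacle is Step 1: the bookkeeping of the second derivative, in particular checking that the resulting triple $(i,j,R)$ reproduces exactly the uniform-permutation average defining $\iota$. A secondary point is making sense of $I(S_u)$ for an infinite set, which is handled by the monotone-limit definition above.
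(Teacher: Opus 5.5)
Your proof is correct and follows the paper's argument. It has the same three steps: the identity $\rho_N=-\mathcal H_N''/N$, the increasing-order chain rule with stationarity, and the Taylor-remainder plus Ces\`aro identification of the limit. Two details differ slightly. The paper obtains that identity from the Bernstein expansion of the averaged subset entropies $\bar H_m$ and the second-difference formula for $\iota$, not from Russo's formula. In the limit step, your direct observation $F_N'(0)=0$ streamlines the paper's evaluation at $u=1$ to show that $\Phi_N'(0)$ converges. One slip to fix: in your second-derivative display the bracket should read $H(X^i\mid X^{R\cup\{j\}})-H(X^i\mid X^{R})$. As written, the middle expression equals $+\sum_{i\neq j}\E\, I(X^i;X^j\mid X^{R})$ rather than the correct final $-\sum_{i\neq j}\E\, I(X^i;X^j\mid X^{R})$.
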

For a Markov chain, $I(S_{d,u})=I_{\min S_{d,u}}$ depends on the revealed set only through its nearest element, and the $d$-sum telescopes to $\tfrac1N G_N(u)$, recovering \eqref{eq:markov finite}.

The last class of examples we consider is that of exchangeable mixtures. 
In these mixtures the dependence is mediated by a global latent variable $P$, in contrast to the local dependence of the Markov examples above. As a result, $\rho_N$ may collapse away from the origin as $N\to\infty$, while concentrating at it. 
Intuitively, once a positive proportion of coordinates has been revealed, $P$  is almost fully identified and the remaining coordinates become nearly conditionally independent.

\begin{proposition}[Exchangeable mixtures]\label{prop:exchangeable-example}
Let $\pi_N$ be an exchangeable law of the form
$P\sim Q$, and $X^1,\dots,X^N\mid P\overset{\mathrm{iid}}{\sim}P$,
so that
\[
\pi_N(x^1,\dots,x^N)
=
\int \prod_{i=1}^N p(x^i)\,Q(dp).
\]
Let
$\iota_N(i):=I(X^{i+1};X^{i+2}\mid X^{1:i})$ for $i=0,\ldots,N-2$,
then
\begin{equation}\label{eq:exchangeable finite}
\rho_N(u)
=
(N-1)\,
\mathbb E_B[\iota_N(B)],
\qquad
B\sim\mathrm{Bin}(N-2,u),
\end{equation}
If moreover $I(X^{m+1};X^{m+2}\mid X^{1:m})=o(1/m)$ and $I(X^1;X^2)>0$, then
\begin{equation}
\rho_N(u)\underset{N\to\infty}{\longrightarrow} 0 \quad\text{for } u\in(0,1],\qquad \rho_N(0)\rightarrow +\infty.    
\end{equation}
\end{proposition}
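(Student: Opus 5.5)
Two claims need proof: the finite-$N$ formula \eqref{eq:exchangeable finite}, and the limiting behaviour of $\rho_N$.

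\textbf{Finite-$N$ formula.} By exchangeability, for any permutation $\sigma$ the vector $(X^{\sigma_1},\dots,X^{\sigma_N})$ has the same law as $(X^1,\dots,X^N)$. Hence
\[
I_\pi\bigl(X^{\sigma_{i+1}};X^{\sigma_{i+2}}\mid X^{\sigma_{\le i}}\bigr)=I\bigl(X^{i+1};X^{i+2}\mid X^{1:i}\bigr)
\]
for every $\sigma$. Averaging over $\sigma$ in \eqref{eq:cmi-profile} gives $\iota(i)=\iota_N(i)$. Inserting this into \eqref{eq:rho-binomial} yields \eqref{eq:exchangeable finite}. One more observation is needed later. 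The quantity $I(X^{m+1};X^{m+2}\mid X^{1:m})$ is defined by the infinite exchangeable sequence $X^1,X^2,\dots\mid P\overset{iid}{\sim}P$, and the law of $(X^1,\dots,X^{m+2})$ does not depend on $N$. So $\iota_N(m)=:c_m$ is a single sequence, independent of $N$, with $c_m=o(1/m)$ by hypothesis and $c_0=I(X^1;X^2)>0$.

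\textbf{Value at the origin.} Since $B_0^{N-2}(0)=1$ and all other Bernstein polynomials vanish at $0$, we get $\rho_N(0)=(N-1)c_0\to\infty$.

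\textbf{Decay for fixed $u\in(0,1]$.} Write $\rho_N(u)=(N-1)\E[c_B]$ with $B\sim\mathrm{Bin}(N-2,u)$. Given $\varepsilon>0$, choose $m_0$ such that $c_m\le\varepsilon/m$ for all $m\ge m_0$. Split the expectation on the event $\{B\ge (N-2)u/2\}$ and its complement.
\begin{itemize}
\item On $\{B\ge (N-2)u/2\}$, once $N$ is large enough that $(N-2)u/2\ge m_0$, we have $c_B\le 2\varepsilon/\bigl((N-2)u\bigr)$. The contribution to $\rho_N(u)$ is therefore at most $(N-1)\cdot 2\varepsilon/\bigl((N-2)u\bigr)\to 2\varepsilon/u$.
\item On the complement, use the uniform bound $c_m\le \log L$, which follows from $I\le H(X^{m+1})$. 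Hoeffding's inequality bounds the probability of this event by $e^{-(N-2)u^2/2}$, so the contribution is at most $(N-1)\log L\,e^{-(N-2)u^2/2}\to0$.
\end{itemize}
Hence $\limsup_N\rho_N(u)\le 2\varepsilon/u$ for every $\varepsilon>0$, and so $\rho_N(u)\to0$.

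\textbf{Main obstacle.} The only delicate point is the $N$-independence of $c_m$, which makes the hypothesis $c_m=o(1/m)$ meaningful uniformly in $N$. This follows because $\pi_N$ is the $N$-dimensional marginal of the same mixture for every $N$. The convergence is pointwise in $u$, not uniform near $0$, and the claim requires nothing more.
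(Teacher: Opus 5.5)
Your proposal is correct and follows essentially the same route as the paper. Exchangeability gives $\iota(i)=\iota_N(i)$ and hence the finite formula, and $\rho_N(0)=(N-1)I(X^1;X^2)$ diverges. The decay for $u\in(0,1]$ comes from splitting on $\{B\ge (N-2)u/2\}$, using the $o(1/m)$ hypothesis on that event and an exponential tail bound on its complement; you use Hoeffding where the paper cites a Chernoff bound. The only other differences are cosmetic: you handle $u=1$ in the same sweep rather than separately, and you make explicit the $N$-independence of $\iota_N(m)$, which the paper uses implicitly.
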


\begin{remark}
The assumption $I(X^{m+1};X^{m+2}\mid X^{1:m})=o(1/m)$ holds, for example, in the Dirichlet-categorical or Beta-Bernoulli model, in which the conditional mutual information is of order $O(1/m^2)$.  
\end{remark}

Equation \eqref{eq:exchangeable finite} is the exchangeable analogue of the Markov formula \eqref{eq:markov finite}. In the Markov case the profile averages dependence across random gap lengths, whereas in the exchangeable case it averages the residual dependence between two future coordinates after conditioning on a random number of previously revealed samples.

We close by comparing two schedules for a finite-dimensional exchangeable target using the same number of iterations. In this degenerate regime, the choice of schedule affects not only the constant in the factorization error but also its asymptotic order.

\begin{proposition}[Comparison of different schedules]\label{prop:comparison-schedules}
Suppose $Q$ is a nondegenerate $p$-dimensional parametric prior satisfying the regularity conditions of \citet{clarke1994jeffreys}, and let $K_N:=1+\lceil \log_2 N\rceil$.
Then the uniform schedule
\[
\beta_k^{\mathrm{unif}}:=\frac{k}{K_N},
\qquad k=0,\dots,K_N,
\]
satisfies
\[
\varepsilon_{\mathrm{fact}}^{\mathrm{unif}}(\pi_N,K_N)
\ge  \frac{C N}{\log N}
\]
 for a constant $C>0$, whereas the geometric schedule
\[
\beta_1=\frac1N,
\qquad
\beta_k=\min\{1,2\beta_{k-1}\},
\]
satisfies
\[
\varepsilon_{\mathrm{fact}}^{\star}(\pi_N,K_N)
\le
\varepsilon_{\mathrm{fact}}^{\mathrm{geom}}(\pi_N,K_N)
=
O(\log N).
\]
where $\varepsilon^\star_{\mathrm{fact}}(\pi_N,K_N)$ denotes the minimum of $\varepsilon_{\mathrm{fact}}$ over all schedules with $K_N$ steps. Consequently
\[
\frac{
\varepsilon_{\mathrm{fact}}^{\mathrm{unif}}(\pi_N,K_N)
}{
\varepsilon_{\mathrm{fact}}^{\star}(\pi_N,K_N)
}
\ge \frac{N C'}{(\log N)^2},
\]
for a constant $C'>0$.
\end{proposition}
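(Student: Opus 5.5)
Everything runs through Theorem~\ref{thm:main}, $\efact(\beta)=N\sum_k\int_{\beta_{k-1}}^{\beta_k}(\beta_k-u)\rho_N(u)\,du$, together with the input from \citet{clarke1994jeffreys}. Under their regularity conditions, $I(X^{1:m};P)=\tfrac p2\log m+O(1)$. Exchangeability gives $I(X^{m+1};X^{m+2}\mid X^{1:m})=I(X^{1:m+2};P)-2I(X^{1:m+1};P)+I(X^{1:m};P)$, a second difference of the Clarke--Barron expansion. From this we want the two-sided estimate
\[
\frac{c_1}{(m+1)^2}\le \iota_N(m)\le \frac{c_2}{(m+1)^2}, \qquad 0\le m\le N-2 .
\]
I expect this to be the main obstacle. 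An $O(1)$ remainder does not control second differences, so the pointwise bound cannot simply be read off the expansion. I would first try a direct local computation: write $\iota_N(m)$ as the expected mutual information between two fresh draws under the posterior after $m$ observations, and use posterior asymptotic normality with covariance of order $I(\theta)^{-1}/m$. This should give $\iota_N(m)\sim \tfrac{p}{2m^2}$ for large $m$, with $\iota_N(0)=I(X^1;X^2)>0$ and positivity covering the small values of $m$. If only averaged control is available, I would fall back on Lemma~\ref{lemma:correlation}-type sums: $\sum_{m\le M}(m+1)\iota_N(m)$ is a combination of first differences, which is of order $\log M$ and should suffice for both bounds.

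\textbf{Lower bound for the uniform schedule.} Keep only the $k=1$ term of the representation and restrict to $u\in[0,\tfrac1{2K_N}]$, where the weight satisfies $\beta_1-u\ge \tfrac1{2K_N}$. This gives
\[
\efact^{\mathrm{unif}}\ge \frac{N}{2K_N}\int_0^{1/(2K_N)}\rho_N(u)\,du .
\]
Using \eqref{eq:exchangeable finite}, $\rho_N(u)=(N-1)\E[\iota_N(B)]\ge (N-1)\iota_N(0)(1-u)^{N-2}$, so the integral is at least $\iota_N(0)\bigl(1-(1-\tfrac1{2K_N})^{N-1}\bigr)\ge c>0$ once $N\gg K_N$. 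Since $K_N\asymp\log N$, this yields $\efact^{\mathrm{unif}}\ge CN/\log N$. For this half only $I(X^1;X^2)>0$ is needed.

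\textbf{Upper bound for the geometric schedule.} Here $\beta_1=1/N$ and $\beta_k=2^{k-1}/N$ until the sequence hits $1$; the choice $K_N=1+\lceil\log_2N\rceil$ ensures $\beta_{K_N}=1$. On block $k$ the weight is at most $\beta_k-\beta_{k-1}\le\beta_k\le 2u$ for $u\ge\beta_{k-1}$ when $k\ge2$, and at most $1/N$ for $k=1$. Hence
\[
\efact^{\mathrm{geom}}\le \int_0^{1/N}\rho_N+2N\int_{1/N}^1 u\,\rho_N(u)\,du .
\]
The first term is at most $D(\pi_N)$, which is $O(1)$ by Lemma~\ref{lemma:correlation} and the bound $\sum_m\iota_N(m)\lesssim\sum_m m^{-2}$. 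For the second term, use $\int_0^1u\,B_i^{N-2}(u)\,du=\tfrac{i+1}{(N-1)N}$, which gives
\[
N\int_0^1u\rho_N(u)\,du=\sum_i (i+1)\iota_N(i)=\dtc(\pi_N)\lesssim \sum_{i\le N}\frac{1}{i+1}=O(\log N).
\]
(This is just $\dtc=O(\log N)$, consistent with Proposition~\ref{prop:boundprevious-work} with $a=1$.) Therefore $\efact^{\mathrm{geom}}=O(\log N)$, and $\efact^\star\le\efact^{\mathrm{geom}}$ holds by definition.

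\textbf{Ratio.} Dividing the two bounds gives $\efact^{\mathrm{unif}}/\efact^\star\ge C'N/(\log N)^2$.
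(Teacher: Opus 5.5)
Your lower bound for the uniform schedule is correct and is the paper's argument. You keep the $m=0$ term of $\rho_N$ and only the $k=1$ block. The paper restricts to $[0,1/N]$ rather than $[0,1/(2K_N)]$, which changes nothing.

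The weak point is the geometric bound. As written, you obtain $\dtc(\pi_N)=O(\log N)$, and $D(\pi_N)=O(1)$, from the pointwise estimate $\iota_N(m)\lesssim(m+1)^{-2}$. As you note yourself, this cannot be read off an expansion with an $O(1)$ remainder. The posterior-normality route would also need uniform-in-$m$ control that is not among the hypotheses. That estimate, and its lower half which you never use, is unnecessary.

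Your fallback is the right argument and should be the main one. Set $J_m:=I(P;X^{1:m})$, so $J_0=0$. Exchangeability gives $\iota_N(m)=2J_{m+1}-J_m-J_{m+2}$. Note the sign: this is minus the second difference, reflecting concavity of $J$, so your displayed identity has it reversed. Summation by parts then yields
\[
\dtc(\pi_N)=\sum_{i=0}^{N-2}(i+1)\iota_N(i)=J_{N-1}-(N-1)\bigl(J_N-J_{N-1}\bigr)\le J_{N-1},
\]
because $J_N-J_{N-1}=I(P;X^N\mid X^{1:N-1})\ge 0$. The first-order Clarke--Barron asymptotics alone then give $O(\log N)$.

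This is the paper's route up to presentation. The paper obtains $\dtc(\pi_N)\le I(P;X^{1:N})$ directly from $H(X^N\mid X^{1:N-1})\ge H(X^N\mid X^{1:N-1},P)=\E[H(P)]$ and $H(X^{1:N}\mid P)=N\,\E[H(P)]$. It then invokes Proposition~\ref{prop:boundprevious-work} with $a=1$, which is your block-splitting with a slightly better constant.

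Likewise, the first-block term $D(\pi_N)$ needs no decay of $\iota_N$ at all, since $D(\pi)\le\log|\x|$ (and $D\le\dtc$) holds universally.
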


\subsection{Numerical illustration}
\label{sec:numerics}

We illustrate the theory for a stationary Markov chain (local dependence, Proposition~\ref{prop:markov-example}) and for a Beta-Bernoulli exchangeable model (global dependence, Proposition~\ref{prop:exchangeable-example}).
The quantities are evaluated numerically from exact expressions for $\rho_N$, without Monte Carlo sampling.

\paragraph{Setup.}
For the \emph{Markov chain} we take a reversible and aperiodic 10-state lazy random walk on a sparse connected randomly weighted graph, so $I_d = I(X^0;X^d)$ decays geometrically in $d$.
For the \emph{exchangeable model} we take the Beta$(1,1)$-Bernoulli mixture, for which
\[
\iota(m):=I(X^{m+1};X^{m+2}\mid X^{1:m})
=\sum_{s=0}^m\PP(S_m=s)\,
I(X^{m+1};X^{m+2}\mid S_m=s),
\qquad S_m:=X^1+\dots+X^m,
\]
is computed by averaging the posterior-predictive mutual information over the Beta-Binomial sufficient statistic $S_m$. Here $\iota(m)=\iota_N(m)$ for every $N\ge m+2$.

\paragraph{Dependence density.}
Figure~\ref{fig:factorization}(a) shows $\rho_N$ for $N=64,\dots,512$ together with its limiting profile $g$ for the Markov chain. The curves are close, with small finite-$N$ corrections over the displayed range $u\in[0,0.15]$.
Panel~(b) shows $\rho_N$ for the exchangeable model over the same range of $N$: the mass concentrates near $u=0$ as $N$ grows, consistent with the degeneration of $\rho_N$ in Proposition~\ref{prop:exchangeable-example}.

\paragraph{Optimal versus linear schedule.}
For each target we compute the $\efact$-optimal schedule from Corollary~\ref{cor:recursive}, solving the recursion \eqref{eq:recursive formula} by one-dimensional root-finding on $\beta_1$ (Brent's method, \citealp{brent2013algorithms}).
Neither profile is log-concave, so the sufficient condition of Corollary~\ref{cor:recursive} does not apply. Nonetheless the numerical shooting search found a single crossing of the terminal condition $\beta_K=1$ in each case, providing numerical evidence of uniqueness for these examples. 
Panel~(c) compares the optimal and linear schedules at $N=256$, $K=\lceil\log_2 N\rceil+1=9$.
For the Markov chain the optimal schedule departs only mildly from the linear one; for the exchangeable model it is strongly front-loaded, concentrating budget where $\rho_N$ is largest.

\paragraph{Local-versus-global dependence.}
Panel~(d) plots the improvement ratio $\efact^{\mathrm{lin}}/\efact^{\mathrm{opt}}$ against $N$, with the step budget $K_N=\lceil\log_2 N\rceil+1$ growing logarithmically.
For the Markov chain the gain is modest and the ratio appears to stabilize over the tested dimension at $1.17$, consistent with the regime of  Remark~\ref{remark:optVSlin}.
For the exchangeable model the numerical results appear consistent with the growth rate lower bound $N/(\log N)^2$, predicted by Proposition~\ref{prop:comparison-schedules}: here optimizing the schedule changes the asymptotic rate of $\efact$, not merely its constant.

\begin{figure}[t]
  \centering
  \includegraphics[width=\linewidth]{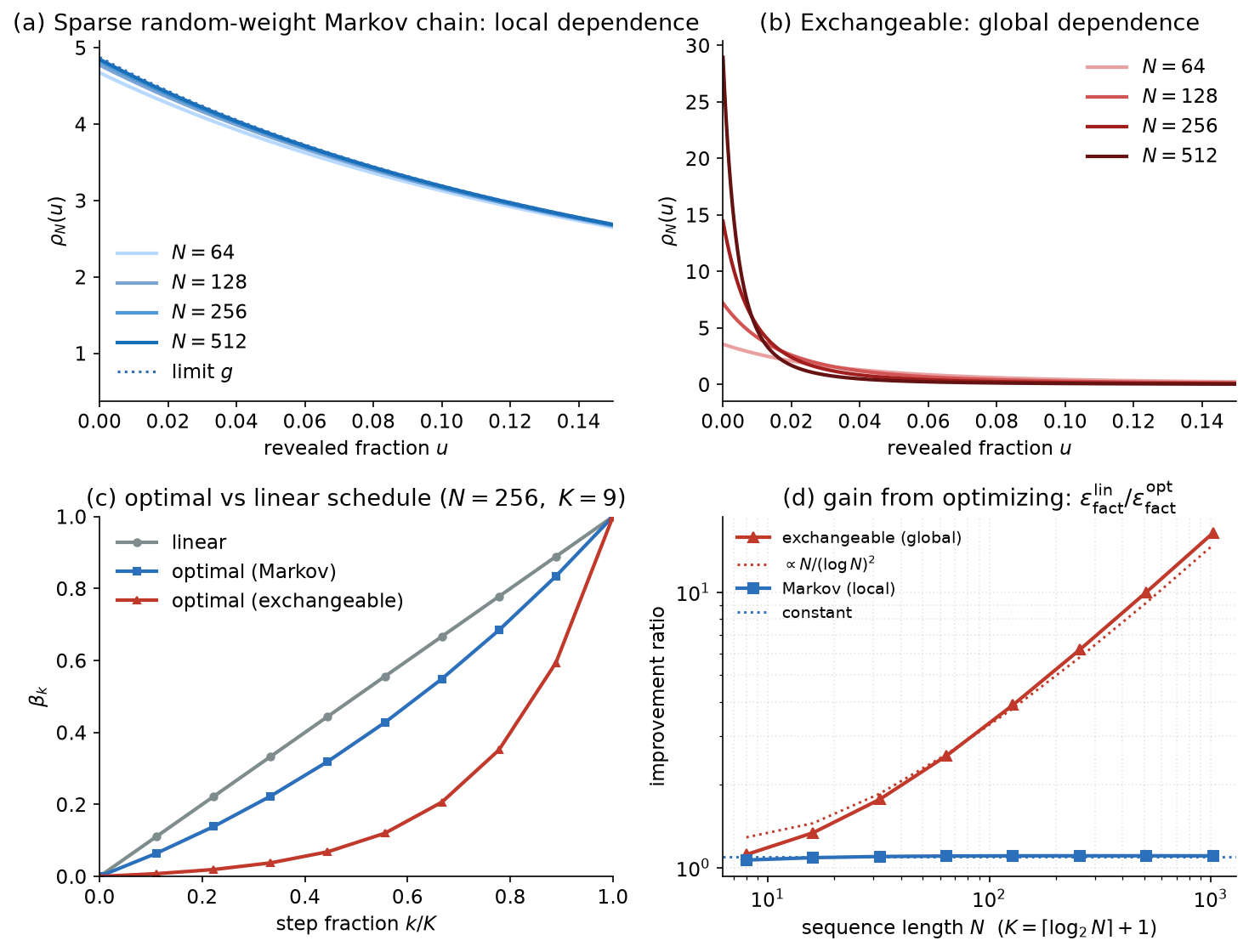}
  \caption{
    \textbf{(a)}~Dependence density $\rho_N$ for the Markov chain at $N=64,128,256,512$, with the limiting profile $g$. 
    \textbf{(b)}~$\rho_N$ for the Beta$(1,1)$-Bernoulli exchangeable model at
    $N=64,128,256,512$.
    Panels (a)-(b) display $u\in[0,0.15]$.
    \textbf{(c)}~$\efact$-optimal versus linear schedule at $N=256$, $K=9$.
    \textbf{(d)}~Improvement ratio $\efact^{\mathrm{lin}}/\efact^{\mathrm{opt}}$
    versus $N$ with $K=\lceil\log_2 N\rceil+1$.
  }
  \label{fig:factorization}
\end{figure}

\section{Estimators for the information profile}\label{sec:profile-estimators}
Computing the recursive schedule in Corollary \ref{cor:recursive} requires $\rho$, which is determined by the sequence $\iota(0),\ldots,\iota(N-2)$.
Assuming that each model call returns predictive distributions for all unrevealed coordinates, estimating the full sequence with $n$ Monte Carlo replications per entry uses $O(nN)$ model calls. 
This computation is performed offline, and its cost is amortized over subsequent generated samples, whose cost is $O(K)$ model calls.

We estimate these coefficients through the auxiliary information profile introduced by \citet{lavenant2025error} and \citet{chen2025optimal}. 
For a uniform random permutation $\sigma$ of $[N]$, define
\begin{equation}\label{eq:inf_prof}
f(i):=-\E_\sigma\!\left[H_\pi(X^{\sigma_{i+1}}\mid X^{\sigma_{\le i}})\right],
\qquad i=0,\ldots,N-1.
\end{equation}
The conditional mutual information identity and the symmetry of the uniform permutation give, for $i=0,\ldots,N-2$
\begin{equation}\label{eq:cmi-increment}
\begin{aligned}
    \iota(i) &=\E_\sigma\!\left[
H_\pi(X^{\sigma_{i+2}}\mid X^{\sigma_{\le i}})
-H_\pi(X^{\sigma_{i+2}}\mid X^{\sigma_{\le i+1}})
\right]\\
&=f(i+1)-f(i)=\Delta f(i).
\end{aligned}
\end{equation}
Thus estimates of $f$ yield estimates of $\iota$ and, in turn, of the dependence density $\rho$.

We first compare two estimators of $f$, then quantify how errors in the estimated coefficients affect the factorization error and the selected schedule. 
Fix $i\in\{0,\dots,N-1\}$. Let $X\sim\pi$ and, independently,  let $Z\subseteq[N]$ be uniform among the subsets of $[N]$ of cardinality $i$ and, conditionally on $Z$, let $J$ be uniform on $[N]\setminus Z$. By \eqref{eq:inf_prof}, $f(i)$ can be equivalently written as
\[
f(i)=\mathbb E_{X,\,Z,\,J}\big[\log\pi(X^J\mid X^Z)\big].
\]
We compare two estimators of $f(i)$ built from the model predictive distribution $p_\theta^j$. The \emph{log-probability estimator} is
\begin{equation}\label{eq:logloss-estimator}
\tilde f_i^\theta
:=
\frac{1}{N-i}\sum_{j\notin Z}\log p_\theta^j(X^j\mid X^Z),
\end{equation}
and the \emph{entropy-estimator} is
\begin{equation}\label{eq:entropy-estimator}
\hat f_i^\theta
:=
-\frac{1}{N-i}\sum_{j\notin Z} H\big(p_\theta^j(\cdot\mid X^Z)\big).
\end{equation}
We write $\tilde f_i,\hat f_i$ for the corresponding quantities under perfect learning, $p_\theta^j=\pi_j$.

\begin{proposition}\label{prop:unbiased estimators}
Under perfect learning, the estimators \eqref{eq:logloss-estimator} and \eqref{eq:entropy-estimator} are unbiased
\[
\E[\tilde f_i]=\E[\hat f_i]=f(i),
\]
and 
\[
\var(\hat f_i)\le\var(\tilde f_i).
\]
\end{proposition}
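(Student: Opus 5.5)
The plan is to view both estimators as functions of the same random triple $(X,Z,J)$ and to show that the entropy estimator is a conditional expectation of the log-probability estimator. Then unbiasedness follows from the tower property, and the variance comparison follows from Jensen's inequality (Rao--Blackwellization). Throughout we work under perfect learning, so $p_\theta^j=\pi_j$.

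\textbf{Step 1 (conditioning on $(Z,X^Z)$).} Fix $Z$ and $X^Z$. For each $j\notin Z$, the conditional law of $X^j$ given $(Z,X^Z)$ is $\pi_j(\cdot\mid X^Z)$, because $Z$ is drawn independently of $X$. Hence
\[
\E\big[\log\pi_j(X^j\mid X^Z)\,\big|\,Z,X^Z\big]=\sum_{x\in\x}\pi_j(x\mid X^Z)\log\pi_j(x\mid X^Z)=-H\big(\pi_j(\cdot\mid X^Z)\big).
\]
Averaging over $j\notin Z$ gives $\E[\tilde f_i\mid Z,X^Z]=\hat f_i$.

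\textbf{Step 2 (unbiasedness).} Write $\tilde f_i=\E_J[\log\pi(X^J\mid X^Z)\mid X,Z]$, with $J$ uniform on $[N]\setminus Z$. Taking the full expectation gives $\E[\tilde f_i]=\E_{X,Z,J}[\log\pi(X^J\mid X^Z)]=f(i)$, by the representation of $f(i)$ stated just before the proposition. Combining this with Step 1 and the tower property yields $\E[\hat f_i]=\E[\tilde f_i]=f(i)$.

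\textbf{Step 3 (variance).} By the law of total variance,
\[
\var(\tilde f_i)=\var\big(\E[\tilde f_i\mid Z,X^Z]\big)+\E\,\var(\tilde f_i\mid Z,X^Z)=\var(\hat f_i)+\E\,\var(\tilde f_i\mid Z,X^Z)\ge\var(\hat f_i).
\]

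The only delicate point is Step 1. It needs $X^j$ with $j\notin Z$ to be distributed as $\pi_j(\cdot\mid X^Z)$ given the conditioning variables; this holds because $Z$ is independent of $X$ and we do not condition on the other unrevealed coordinates. Finiteness of the variances is not an issue: on the support of $\pi$ each conditional probability is positive and $\x$ is finite, so $\log\pi_j$ takes finitely many finite values.
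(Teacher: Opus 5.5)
Your proposal is correct and follows essentially the same route as the paper. You get $\E[\tilde f_i]=f(i)$ from the representation $f(i)=\E[\log\pi(X^J\mid X^Z)]$, identify $\hat f_i=\E[\tilde f_i\mid Z,X^Z]$ using that $X^j\mid(Z,X^Z)\sim\pi_j(\cdot\mid X^Z)$ for $j\notin Z$, and conclude with the tower property and the law of total variance, i.e.\ Rao--Blackwellization with respect to $(Z,X^Z)$.
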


We now drop the assumption of perfect learning and relate the bias of the two estimators to the model training objective. 
Up to an additive constant independent of $\theta$, the denoising cross-entropy loss decomposes as a weighted sum of conditional KL errors \citep{ou2024your,kim2025train} defined for $i=0,\dots,N-1$ as
\[
\mathcal L_i:=\mathbb E_{Z,X^Z}\Bigl[\frac{1}{N-i}\sum_{j\notin Z}\kl{\pi_j(\cdot\mid X^Z)}{p_\theta^j(\cdot\mid X^Z)}\Bigr]
=\mathbb E_{Z,J,X^Z}\bigl[\kl{\pi_J(\cdot\mid X^Z)}{p_\theta^J(\cdot\mid X^Z)}\bigr],
\]
where $Z$ and $J$ are distributed as above.

\begin{proposition}\label{prop:bias}
For general learned predictors, the bias of the log-probability estimator is 
\[
\mathbb E\big[\tilde f_i^\theta\big]-f(i)=-\,\mathcal L_i\;\le 0.
\]
If moreover $\mathcal L_i\le 2\bigl(1-1/|\x|\bigr)^2$, then the bias of the entropy estimator satisfies
\[
\bigl|\mathbb E[\hat f_i^\theta]-f(i)\bigr|\ \le\ \phi\bigl(\sqrt{\mathcal L_i/2}\bigr),
\qquad
\phi(T):=T\log(|\x|-1)+H_2(T),
\]
where $H_2(T):=-T\log T-(1-T)\log(1-T)$ is the
binary entropy. In particular 
\[
\bigl|\E[\hat f_i^\theta]-f(i)\bigr|
=
O\!\left(
\sqrt{\mathcal L_i}\log(1/\mathcal L_i)
\right)
\qquad\text{as }\mathcal L_i\to0.
\]
\end{proposition}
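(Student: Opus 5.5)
Both claims reduce to conditioning on the pair $(Z,X^Z)$; for the entropy estimator we then add a continuity bound for entropy.

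\emph{Log-probability bias.} Conditionally on $(Z,X^Z)$, and for each $j\notin Z$, the coordinate $X^j$ has law $\pi_j(\cdot\mid X^Z)$. Hence
\[
\E\bigl[\log p_\theta^j(X^j\mid X^Z)\mid Z,X^Z\bigr]
=-H\bigl(\pi_j(\cdot\mid X^Z)\bigr)-\kl{\pi_j(\cdot\mid X^Z)}{p_\theta^j(\cdot\mid X^Z)}.
\]
We average over $j\notin Z$ with weight $1/(N-i)$ and then over $(Z,X^Z)$. The first term yields $-\E_{Z,J}H_\pi(X^J\mid X^Z)=f(i)$, by \eqref{eq:inf_prof} and the exchangeability of the pair (uniform $i$-subset, uniform remaining index) with $(\sigma_{\le i},\sigma_{i+1})$. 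The second term is exactly $-\mathcal L_i$ by definition. Since KL is nonnegative, $\mathcal L_i\ge0$, and this gives $\E[\tilde f_i^\theta]-f(i)=-\mathcal L_i\le 0$.

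\emph{Entropy-estimator bias.} By the same identification of $f(i)$, we have
\[
\E[\hat f_i^\theta]-f(i)=\E_{Z,J,X^Z}\bigl[H(\pi_J(\cdot\mid X^Z))-H(p_\theta^J(\cdot\mid X^Z))\bigr].
\]
Write $\pi^J:=\pi_J(\cdot\mid X^Z)$ and $p^J:=p_\theta^J(\cdot\mid X^Z)$ for short. Let $T_{Z,J}:=\mathrm{TV}(\pi^J,p^J)$. Pinsker's inequality gives $T_{Z,J}\le\sqrt{\kl{\pi^J}{p^J}/2}$, and the sharp Fano-type continuity bound for entropy (Audenaert/Zhang) gives $|H(\pi^J)-H(p^J)|\le\phi(T_{Z,J})$ whenever $T_{Z,J}\le 1-1/|\x|$. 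Two routine facts about $\phi(T)=T\log(|\x|-1)+H_2(T)$ will be checked by differentiation: $\phi$ is concave on $[0,1]$, and it is nondecreasing on $[0,1-1/|\x|]$, because $\phi'(T)=\log\bigl((|\x|-1)(1-T)/T\bigr)\ge0$ there.

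\emph{Main obstacle: averaging over $(Z,J,X^Z)$.} The difficulty is passing from the pointwise bound to one in terms of $\mathcal L_i$, because individual $T_{Z,J}$ may exceed $1-1/|\x|$ even when the average is small. To handle this I replace $\phi$ by the monotone envelope $\bar\phi(T):=\phi(\min\{T,1-1/|\x|\})$. This envelope is concave and nondecreasing on $[0,1]$, since $\phi$ attains its maximum $\log|\x|$ at $T=1-1/|\x|$. Moreover, the continuity bound still holds with $\bar\phi$ for all $T$, because the entropy difference never exceeds $\log|\x|$. Then Jensen's inequality for the concave $\bar\phi$, monotonicity of $\bar\phi$, Pinsker's inequality, and Jensen again for the square root give
\[
\bigl|\E[\hat f_i^\theta]-f(i)\bigr|\le \E\,\bar\phi(T_{Z,J})\le\bar\phi(\E T_{Z,J})\le\bar\phi\Bigl(\E\sqrt{\kl{\pi^J}{p^J}/2}\Bigr)\le\bar\phi\bigl(\sqrt{\mathcal L_i/2}\bigr).
\]
The hypothesis $\mathcal L_i\le2(1-1/|\x|)^2$ is exactly the condition $\sqrt{\mathcal L_i/2}\le1-1/|\x|$, so on the right-hand side $\bar\phi$ coincides with $\phi$, which proves the stated bound. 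Finally, as $T\to0$ we have $H_2(T)=T\log(1/T)+O(T)$. Substituting $T=\sqrt{\mathcal L_i/2}$ gives $\phi(T)=O\bigl(\sqrt{\mathcal L_i}\log(1/\mathcal L_i)\bigr)$.
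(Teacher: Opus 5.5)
Your proposal is correct and follows essentially the same route as the paper: conditioning on $(Z,X^Z)$ and decomposing the expected log-predictive into $-H-\mathrm{KL}$ gives the log-probability bias, and the entropy bound comes from Fannes--Audenaert, Jensen for concave $\phi$, Pinsker with Jensen for the square root, and monotonicity of $\phi$ on $[0,1-1/|\x|]$. Your monotone envelope $\bar\phi$ is a valid but unneeded precaution: the Audenaert bound holds (and is sharp) for every $T\in[0,1]$, and $\phi$ is concave on all of $[0,1]$, so the paper applies Jensen to $\phi$ directly and uses the hypothesis on $\mathcal L_i$ only for the final monotonicity step.
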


Proposition \ref{prop:bias} shows that both biases vanish as $\mathcal L_i\to0$, but provides different guarantees.
The bias of the log-probability estimator is exactly
$-\mathcal L_i$ and is therefore always non-positive. For the entropy estimator, we obtain the upper bound
$O(\sqrt{\mathcal L_i}\log(1/\mathcal L_i))$ on the absolute bias; since $x=o(\sqrt x\log(1/x))$ as $x\to0^+$ the latter guarantee is weaker in its dependence on $\mathcal L_i$.
This does not imply that the entropy estimator has a larger absolute bias as the bound is obtained through Pinsker's inequality and need not be tight.

Under perfect learning, both estimators are unbiased, and Proposition~\ref{prop:unbiased estimators} shows that the entropy estimator has smaller variance than the log-probability estimator. The following provides a Monte Carlo bound for $\hat f^\theta_i$ for arbitrary predictors, including imperfectly learned ones.

\begin{proposition}\label{prop:estimator variance}
Fix $i\in\{0,\dots,N-1\}$ and let $(X^{(m)},Z^{(m)})$, $m=1,\dots,n$ be independent copies of $(X,Z)$ as defined above, and let $\hat f_{i,m}^{\theta}$ be the corresponding estimators  \eqref{eq:entropy-estimator}, and set $\bar f_i:=\tfrac1n\sum_{m}\hat f^\theta_{i,m}$. Then
\[
\var(\bar f_i)\le\frac{(\log|\x|)^2}{4n}.
\]
Moreover, for every $\delta\in(0,1)$, with probability at least $1-\delta$,
\[
\bigl|\bar f_i-\E[\hat f^\theta_i]\bigr|\le\log|\x|\,\sqrt{\frac{\log(2/\delta)}{2n}}.
\]
\end{proposition}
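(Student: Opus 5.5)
The plan is to show that each $\hat f^\theta_{i,m}$ is a bounded random variable taking values in an interval of length $\log|\x|$, and then invoke Popoviciu's inequality for the variance and Hoeffding's inequality for the concentration bound. Observe that for any fixed $(X,Z)$, each summand $H\bigl(p_\theta^j(\cdot\mid X^Z)\bigr)$ is the entropy of a probability distribution supported on $\x$ (the predictor assigns no mass to $\m$), so $0\le H(p_\theta^j(\cdot\mid X^Z))\le\log|\x|$. Since $\hat f^\theta_i$ in \eqref{eq:entropy-estimator} is minus an average of $N-i$ such terms, it satisfies $-\log|\x|\le \hat f^\theta_i\le 0$ almost surely. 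This holds for arbitrary predictors, with no use of perfect learning; the only thing we need is that $p_\theta^j$ is a genuine distribution on $\x$, which is how the predictors were defined in Section~\ref{sec:background}.

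For the variance bound, we apply Popoviciu's inequality: a random variable supported in $[a,b]$ has variance at most $(b-a)^2/4$. Hence $\var(\hat f^\theta_i)\le(\log|\x|)^2/4$. The copies $\hat f^\theta_{i,m}$ are i.i.d., because they are deterministic functions of the independent pairs $(X^{(m)},Z^{(m)})$; note that the estimator \eqref{eq:entropy-estimator} averages over all $j\notin Z$, so no extra randomness from $J$ enters. Therefore $\var(\bar f_i)=\var(\hat f^\theta_i)/n\le(\log|\x|)^2/(4n)$.

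For the high-probability statement, we apply Hoeffding's inequality to the i.i.d. bounded variables with range length $\log|\x|$:
\[
\PP\bigl(|\bar f_i-\E[\hat f^\theta_i]|\ge s\bigr)\le 2\exp\!\bigl(-2ns^2/(\log|\x|)^2\bigr).
\]
Setting the right-hand side equal to $\delta$ and solving gives $s=\log|\x|\sqrt{\log(2/\delta)/(2n)}$, which is the claimed deviation.

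There is no real obstacle here. The only point requiring care is the independence and identical distribution of the summands, which follows directly from the construction of the $n$ replications.
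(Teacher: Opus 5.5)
Your proposal is correct and follows the paper's proof essentially verbatim: bound each $\hat f^\theta_{i,m}$ in $[-\log|\x|,0]$ using $0\le H(p)\le\log|\x|$, then apply Popoviciu's inequality with the i.i.d. structure for the variance and Hoeffding's inequality for the deviation bound. Your remark that the boundedness uses only that $p_\theta^j$ is a distribution on $\x$, and not perfect learning, is exactly why the statement holds for arbitrary predictors.
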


Given an estimated vector $\bar f$, we set
\[
\bar\iota(i):=\bar f_{i+1}-\bar f_i,
\qquad
\bar\rho(u):=(N-1)\sum_{i=0}^{N-2}\bar\iota(i)B_i^{N-2}(u).
\]

We conclude by quantifying how errors in the estimated coefficients affect the factorization error and the selected schedule. We do not claim \eqref{eq:first-ineq-efact} to be tight, and leave the development of sharper inequalities with stronger assumptions to future work.

\begin{proposition}\label{prop:stability}
Let $\mathcal B_K
:=
\left\{
\beta=(\beta_0,\ldots,\beta_K):
0=\beta_0\le\cdots\le\beta_K=1
\right\}$,
and $\Delta\beta_{\max}
:=
\max_{1\le k\le K}(\beta_k-\beta_{k-1})$.
For any vector $a=(a(0),\ldots,a(N-2))\in\mathbb R^{N-1}$, set
\[
\rho_a(u):=(N-1)\sum_{i=0}^{N-2}a(i)B_i^{N-2}(u),
\qquad
\efact(a,\beta):=N\sum_{k=1}^K\int_{\beta_{k-1}}^{\beta_k}
(\beta_k-u)\rho_a(u)\,du.
\]
For a fixed schedule $\beta$, this functional is linear in $a$. Writing $\|a\|_1:=\sum_{i=0}^{N-2}|a(i)|$, we have
\begin{equation}\label{eq:first-ineq-efact}
\left|\efact(\iota,\beta)-\efact(\bar\iota,\beta)\right|
\le N\Delta\beta_{\max}\|\iota-\bar\iota\|_1,
\end{equation}
If $\hat\beta\in\arg\min_{\beta\in\mathcal B_K}\efact(\bar\iota,\beta)$ and
$\beta^*\in\arg\min_{\beta\in\mathcal B_K}\efact(\iota,\beta)$, then
\begin{equation}\label{eq:error-f-estimator}
0\le\efact(\iota,\hat\beta)-\efact(\iota,\beta^*)
\le 2N\,\max\{\Delta\hat\beta_{\max},\Delta\beta^*_{\max}\}\,\|\iota-\bar\iota\|_1.
\end{equation}
\end{proposition}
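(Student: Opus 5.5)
Both inequalities reduce to controlling the linear functional $a\mapsto\efact(a,\beta)$ by the $\ell^1$ norm of $a$. By linearity, the left side of \eqref{eq:first-ineq-efact} equals $|\efact(e,\beta)|$ with $e:=\iota-\bar\iota$. Expanding $\rho_e$ in the Bernstein basis gives
\[
\efact(e,\beta)=\sum_{i=0}^{N-2}e(i)\,w_i(\beta),\qquad
w_i(\beta):=N(N-1)\sum_{k=1}^K\int_{\beta_{k-1}}^{\beta_k}(\beta_k-u)B_i^{N-2}(u)\,du .
\]
Each weight is nonnegative. Hence $|\efact(e,\beta)|\le \max_i w_i(\beta)\,\|e\|_1$, and it suffices to show $w_i(\beta)\le N\Delta\beta_{\max}$ for every $i$.

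For this weight bound I would use the crude estimate $0\le\beta_k-u\le\Delta\beta_{\max}$ on each interval $[\beta_{k-1},\beta_k]$. This gives
\[
w_i(\beta)\le N(N-1)\Delta\beta_{\max}\int_0^1B_i^{N-2}(u)\,du.
\]
The Beta integral evaluates to $\int_0^1 B_i^{n}(u)\,du=1/(n+1)$, which here is $1/(N-1)$, so $w_i(\beta)\le N\Delta\beta_{\max}$ and \eqref{eq:first-ineq-efact} follows.

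As a consistency check, with $a=\iota$ this is exactly the route to Proposition~\ref{prop:uniform-bound}, since $\sum_i\iota(i)=D(\pi)$ by Lemma~\ref{lemma:correlation}. The one step that needs care is that I must not use positivity of $\rho_e$: the estimate $\bar\iota$ can have negative entries. The argument above is insensitive to signs because it bounds each coefficient $e(i)$ in absolute value against a nonnegative weight.

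For \eqref{eq:error-f-estimator}, the lower bound $0\le\efact(\iota,\hat\beta)-\efact(\iota,\beta^*)$ is immediate from the optimality of $\beta^*$. For the upper bound I would use the standard three-term decomposition:
\[
\efact(\iota,\hat\beta)-\efact(\iota,\beta^*)
=\bigl[\efact(\iota,\hat\beta)-\efact(\bar\iota,\hat\beta)\bigr]
+\bigl[\efact(\bar\iota,\hat\beta)-\efact(\bar\iota,\beta^*)\bigr]
+\bigl[\efact(\bar\iota,\beta^*)-\efact(\iota,\beta^*)\bigr].
\]
The middle bracket is $\le0$ because $\hat\beta$ minimizes $\efact(\bar\iota,\cdot)$. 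The first and third brackets are bounded by \eqref{eq:first-ineq-efact} applied to $\hat\beta$ and to $\beta^*$, giving $N\Delta\hat\beta_{\max}\|\iota-\bar\iota\|_1$ and $N\Delta\beta^*_{\max}\|\iota-\bar\iota\|_1$. Bounding both step sizes by their maximum yields the factor $2N\max\{\Delta\hat\beta_{\max},\Delta\beta^*_{\max}\}$.

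Finally, the minimizers over $\mathcal B_K$ exist because this set is compact and $\efact(a,\cdot)$ is continuous. The statement assumes they exist anyway, so no further argument is required there.
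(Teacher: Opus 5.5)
Your proposal is correct and follows the paper's proof essentially step for step. The paper also writes $\efact(a,\beta)=\sum_i a(i)w_i(\beta)$ with nonnegative weights bounded by $N\Delta\beta_{\max}$ (because $b_i=(N-1)B_i^{N-2}$ integrates to one), and then obtains \eqref{eq:error-f-estimator} by applying \eqref{eq:first-ineq-efact} at $\hat\beta$ and at $\beta^*$ together with the optimality of $\hat\beta$ for $\efact(\bar\iota,\cdot)$.
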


\subsection{Numerical experiments on mixtures of products}
\label{sec:experiments}

We illustrate the theoretical predictions on a class of targets for which all the relevant conditional distributions can be evaluated exactly.
This allows us to set $p_\theta^j=\pi_j$, so that the learning-error contribution vanishes.

\paragraph{Target distribution.}
We consider a mixture of $r$ product distributions over $N$ coordinates, each taking value in a vocabulary $\x$ of size $L$:
\begin{equation}\label{eq:mixture-target}
\pi(x)
=
\sum_{z=1}^r w_z\prod_{j=1}^N\mu_{z,j}(x^j),
\qquad
x\in\x^N,
\end{equation}
where $w\in\Delta^{r-1}$ and $\mu_{z,j}\in\mathcal P(\x)$.
Equivalently, a latent variable $Z\sim\mathrm{Cat}(w)$ is drawn first, after which the coordinates are conditionally independent, with $X^j\mid Z=z\sim\mu_{z,j}$.

The component marginals are sampled from the symmetric Dirichlet distribution $\mu_{z,j}\sim\mathrm{Dir}(\alpha/L,\dots,\alpha/L)$ and then held fixed throughout profile estimation and Monte Carlo evaluation. The mixture weights are uniform, $w_z=1/r$. Throughout the experiment, we set $r=5$, $L=10$, $\alpha=0.3$ and consider $N\in\{8,16,32,64,128,256\}$, $K=\log_2N$. Implementation details are in Appendix~\ref{app:conditionals}.

\paragraph{Behavior of $\rho_N$.} 
Figure~\ref{fig:mixture} $(a)$ shows that the estimated dependence density $\bar\rho_N(u)$ becomes increasingly concentrated near $u=0$ as $N$ increases.
For readability, the upper limit of the vertical axis is set to $70$.

Conditional on the sampled component marginals, the target is generally not exchangeable, since the distributions $\mu_{z,j}$ depend on the coordinate $j$.
Thus, Proposition~\ref{prop:exchangeable-example} does not apply directly.
Nevertheless, the observed behavior suggests that a similar concentration phenomenon can occur beyond the conditionally i.i.d. setting covered by that proposition.

Figure~\ref{fig:mixture} $(b)$ compares the linear schedule $\beta_k^{\mathrm{lin}}=k/K$ with a numerically optimized schedule $\hat\beta$ for $N=128$, $K=7$, where the latter is computed from the estimated profile.
Evaluating the representation in Theorem~\ref{thm:main} with this profile gives estimated factorization errors of $21.142$ and $0.681$, respectively, corresponding to an estimated improvement factor of approximately $31$.

\paragraph{Monte Carlo comparison.}
Figure~\ref{fig:mixture} $(c)$ compares two estimates of the factorization error for the linear and numerically optimized schedules.
The solid curves are obtained from Theorem~\ref{thm:main}, using $\bar\iota(i)=\bar f_{i+1}-\bar f_i$ to construct the estimated dependence density.
The boxplots summarize $100$ independent Monte Carlo estimates from \eqref{eq:factorization-error}, each based on $4000$ draws.
This comparison provides a check of the representation-based estimates against direct Monte Carlo
evaluation.

Finally, Figure~\ref{fig:mixture} $(d)$ reports the estimated improvement ratio $\widehat R_{N,K}:=\efact(\bar\iota,\beta^{\mathrm{lin}})/\efact(\bar\iota,\hat\beta)$.
For the target instances considered, the estimated gain increases with $N$, indicating substantial benefits from schedule optimization over the tested range of dimensions.

\begin{figure}[t]
\centering
\includegraphics[width=\linewidth]{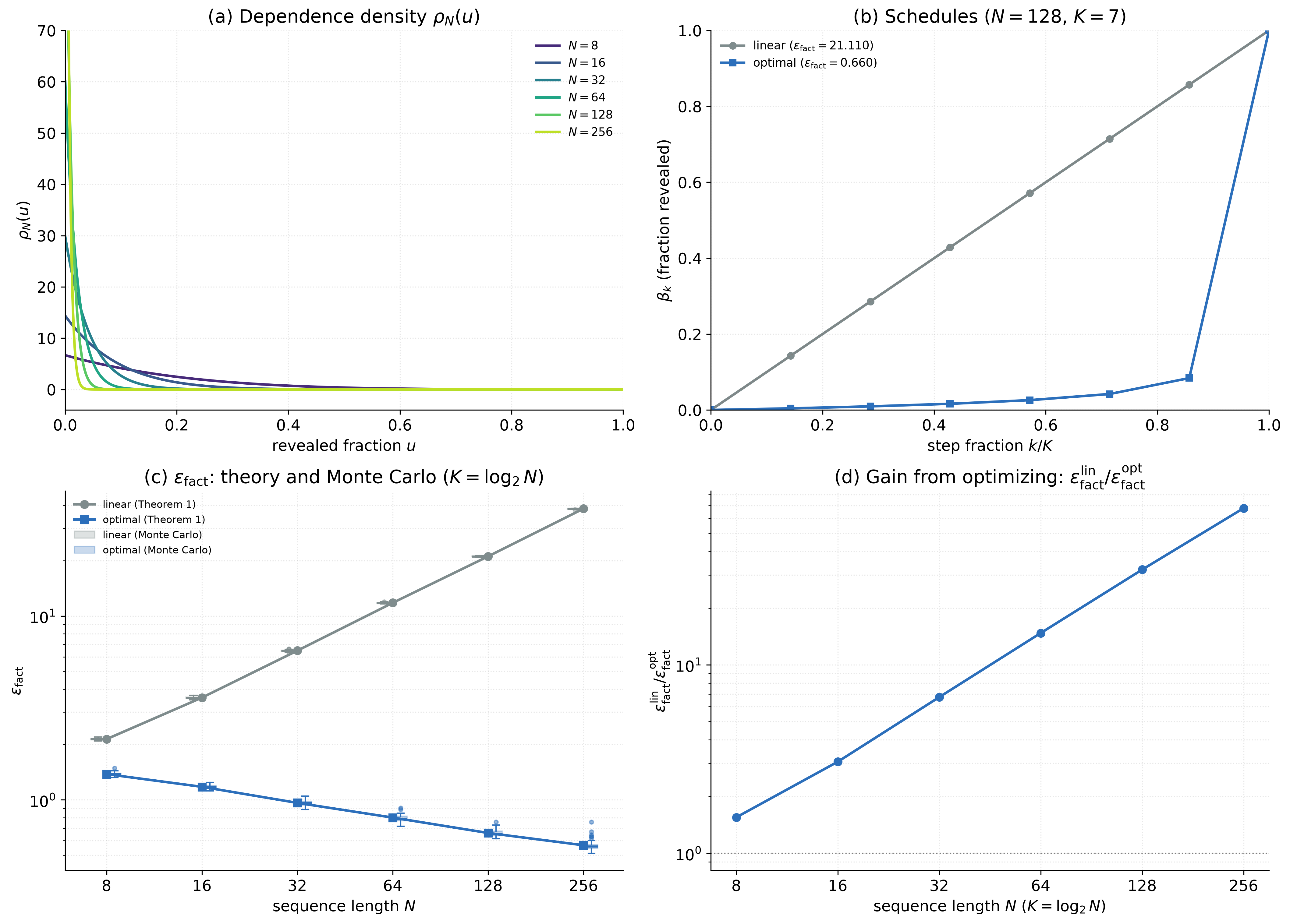}

\caption{
\textbf{(a)} Estimated dependence densities $\bar\rho_N(u)$ for mixture-of-products targets with $N\in\{8,16,32,64,128,256\}$. 
The vertical axis is truncated at $70$.
\textbf{(b)} Linear and numerically optimized schedules
for $N=128$ and $K=7$.
\textbf{(c)} Representation-based and direct Monte Carlo estimates of $\efact$ for both schedules with $K=\log_2N$. 
The curves use the estimated profiles, and the boxplots summarize $100$ independent Monte Carlo estimates.
Both axes are logarithmic. 
\textbf{(d)} Estimated improvement ratio $\widehat R_{N,K}$ versus $N$, with $K=\log_2N$, on logarithmic axes.}
\label{fig:mixture}
\end{figure}

\section*{Acknowledgements}
During the preparation of this work, the authors used OpenAI ChatGPT-5.6 to explore proof strategies and assist with the drafting and revision of the manuscript.
The authors take full responsibility for the contents of the paper.

\bibliography{references}

\appendix

\section{Proof of Section \ref{sec:background}} \label{app:proof section equivalence}
\begin{proof}[Proof of Proposition \ref{prop:tau-leaping-equivalence}]

Since both methods are Markov chains starting from the fully masked state, equality of the stepwise transition kernels implies equality of the laws of $(y_0,\dots,y_K)$, and in particular of the terminal outputs.

Fix $k\in\{1,\dots,K\}$ and recall $q_k:=\frac{\beta_k-\beta_{k-1}}{1-\beta_{k-1}}$.
Let $y\in (\x^*)^N$ be the current state with masked set  
\[
M:=\{i\in[N]: y^i=\m\},\qquad r:=|M|,
\] 
and let $x\in(\x^*)^N$ be a candidate next state. Both kernels vanish unless $x^{M^c}=y^{M^c}$; we assume this in what follows and omit the corresponding indicator from all displays. 
We write $K_k^{(1)}$ and $K_k^{(2)}$ for the step-$k$ kernels of Algorithms~\ref{alg:standard-tau-leaping} and~\ref{alg:tau-leaping2}, respectively.

In Algorithm~\ref{alg:standard-tau-leaping}, the coordinates of $M$ are updated independently, so
\[
K_k^{(1)}(x\mid y)
=
\prod_{i\in M}
\Bigl[
(1-q_k)\,\mathbbm 1_{\{x^i=\m\}}
+
q_k\,p_\theta^i(x^i\mid y)
\Bigr].
\]
For Algorithm~\ref{alg:tau-leaping2}, define the set of newly revealed coordinates
\[
R:=\{i\in M: x^i\neq \m\},\qquad u:=|R|.
\]
Since $p_\theta^i(\m\mid y)=0$, the event $\{y_k=x\}$ is equal to $\{z_k=R,\,y_k^{R}=x^{R}\}$. Therefore
\begin{align*}
K_k^{(2)}(x\mid y)
&=\PP(s_k=u)\PP(z_k=R\mid s_k=u)\PP(y_k^R=x^R\mid z_k=R)\\
&=
\binom{r}{u}q_k^{u}(1-q_k)^{r-u}\,
\frac{1}{\binom{r}{u}}
\prod_{i\in R}
p_\theta^i(x^i\mid y)
=
q_k^{u}(1-q_k)^{r-u}
\prod_{i\in R}
p_\theta^i(x^i\mid y)
\\
&=
\prod_{i\in M}
\Bigl[
(1-q_k)\,\mathbbm 1_{\{x^i=\m\}}
+
q_k\,p_\theta^i(x^i\mid y)
\Bigr]
=K_k^{(1)}(x\mid y),
\end{align*}
where the last line uses $p_\theta^i(\m\mid y)=0$ once more: the bracket equals $1-q_k$ when $x^i=\m$ and $q_k\,p_\theta^i(x^i\mid y)$ otherwise. Since $k$, $y$ and $x$ were arbitrary, the two algorithms have the same transition kernels, which proves the claim.
\end{proof}

\section{Proofs of Section \ref{sec: main result}}\label{app:section:main result}

\begin{proof}[Proof of Lemma \ref{lem:product-measure}]
The coefficient identity in Lemma~\ref{lemma:correlation} gives
\[
\tc(\pi)=\sum_{i=0}^{N-2}(N-i-1)\iota(i)
=\kl{\pi}{\otimes_{j=1}^N\pi_j}.
\]
All coefficients are positive and $\iota(i)\ge0$. If $\pi$ is a product measure, the right-hand side is zero, so every $\iota(i)$ is zero and $\rho\equiv0$. Otherwise at least one $\iota(i)$ is positive. Since every $B_i^{N-2}(u)$ is strictly positive for $u\in(0,1)$, \eqref{eq:bernstein_rep} then gives $\rho(u)>0$ throughout this interval.
\end{proof}

Before proving Theorem~\ref{thm:main}, we recall the discrete factorization-error representation of \citet{lavenant2025error} and \citet{chen2025optimal}. We state it using the conditional mutual information profile $\iota$; the relation to their information-profile formulation is given in \eqref{eq:cmi-increment}.

\begin{lemma}[Conditional mutual information representation]
\label{lem:information-increment-representation}
Let $s=(s_1,\ldots,s_K)$, where $s_k:=|z_k|$, and set $C_0:=0$, $C_k:=\sum_{\ell=1}^k s_\ell$. For $j=1,\ldots,N-1$, define
\[
r_s(j):=\min\{C_k:C_k\ge j\}.
\]
Then
\begin{equation}\label{eq:lavenant-factorization}
\varepsilon_{\mathrm{fact}}
=\E_{\nu^\beta(s)}[A(s)],
\qquad
A(s)=\sum_{i=0}^{N-2}\iota(i)\bigl(r_s(i+1)-(i+1)\bigr),
\end{equation}
where $\nu^\beta(s)$ is the induced law of the block sizes $s=(s_1,\dots,s_K):=(|z_1|,\dots,|z_K|)$.
\end{lemma}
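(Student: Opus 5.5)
Conditionally on the block sizes $s$, the partition $z$ is uniform among ordered partitions with those sizes. The sampler reveals a uniform random subset $z_1$ of size $s_1$, then a uniform subset of the rest, and so on. So I would first rewrite the $z$-expectation as a uniform random permutation $\sigma$ of $[N]$, with block $k$ given by $\sigma$-positions $C_{k-1}+1,\dots,C_k$. By \eqref{eq:factorization-error},
\[
\efact=\E_{s}\,\E_\sigma\sum_{k=1}^K \tc\bigl(X^{z_k}\mid X^{z_{<k}}\bigr),
\]
where the $k$-th summand is the conditional total correlation of the block given the past.

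Next I would expand each conditional total correlation by the chain rule, ordering the block's coordinates by their position in $\sigma$:
\[
\tc\bigl(X^{z_k}\mid X^{z_{<k}}\bigr)=\sum_{m=C_{k-1}+1}^{C_k} I\bigl(X^{\sigma_m};X^{\sigma_{C_{k-1}+1:m-1}}\mid X^{\sigma_{\le C_{k-1}}}\bigr).
\]
Each of these mutual information terms I would telescope further by the chain rule into single-coordinate pieces,
\[
I\bigl(X^{\sigma_m};X^{\sigma_{C_{k-1}+1:m-1}}\mid X^{\sigma_{\le C_{k-1}}}\bigr)=\sum_{j=C_{k-1}+1}^{m-1} I\bigl(X^{\sigma_m};X^{\sigma_j}\mid X^{\sigma_{<j}}\bigr).
\]

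Now I take the expectation over $\sigma$. By exchangeability of the uniform permutation, the law of $(\sigma_m,\sigma_j,\sigma_{<j})$ for $m>j$ matches that of $(\sigma_{j+1},\sigma_j,\sigma_{<j})$. Hence each term has expectation $\iota(j-1)$ by \eqref{eq:cmi-profile}, using the symmetry of mutual information.

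It remains to count. For fixed $s$, the index $i=j-1$ receives one contribution for each $m$ with $j<m\le C_k$, where $k$ is the block containing position $j$. That block is the one with $C_{k-1}<j\le C_k$, so $C_k=r_s(j)$. The count is therefore $r_s(i+1)-(i+1)$. Summing over $i=0,\dots,N-2$ gives $A(s)$; the index $j=N$ contributes $r_s(N)-N=0$, which explains why the sum stops at $N-2$. Averaging over $\nu^\beta(s)$ then yields \eqref{eq:lavenant-factorization}.

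Empty blocks need no special treatment: they add no terms, and $r_s$ skips over repeated values of $C_k$. I expect the main point needing care to be the exchangeability step. One must check that, conditionally on $s$, the block labels are independent of $X$ and exactly uniform, which holds by the construction of Algorithm~\ref{alg:tau-leaping2}. Each $I(\cdot)$ is an average over $x^{\sigma_{<j}}$ at fixed positions, so replacing $\sigma_m$ by $\sigma_{j+1}$ is legitimate.
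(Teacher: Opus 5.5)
Your proof is correct. Conditionally on $s$ the ordered partition is uniform, so you may realize it as $z_k=\{\sigma_{C_{k-1}+1},\dots,\sigma_{C_k}\}$ with $\sigma$ uniform and independent of $(s,X)$. Both chain-rule expansions are exact. Swapping positions $m$ and $j+1$ leaves $(\sigma_j,\sigma_{<j})$ unchanged, so each pairwise term averages to $\iota(j-1)$. Finally, the count $C_k-j=r_s(j)-j$ is right, including empty blocks and the vanishing $j=N$ term.

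The paper gives no proof of its own here. It recalls the information-profile representation of Lavenant--Zanella and Chen et al.; conditionally on $s$, this amounts to $\E_\sigma\,\tc(X^{z_k}\mid X^{z_{<k}})=\sum_{m=C_{k-1}+1}^{C_k}\bigl(f(m-1)-f(C_{k-1})\bigr)$. It then translates this into the $\iota$ form through $\iota(i)=f(i+1)-f(i)$ from \eqref{eq:cmi-increment}. Telescoping $f(m-1)-f(C_{k-1})=\sum_{i=C_{k-1}}^{m-2}\iota(i)$ produces the same weight $C_k-i-1=r_s(i+1)-(i+1)$. So the two arguments are the same computation done in opposite orders. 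The cited route averages over $\sigma$ at the level of one-coordinate conditional entropies and telescopes afterwards. You instead telescope into pairwise conditional mutual informations first and then average each one. Your version is self-contained and makes the weight directly interpretable, as the number of later coordinates sharing the block of position $i+1$, without passing through $f$. The $f$-route puts all the exchangeability into the single identity $\E_\sigma H(X^{\sigma_m}\mid X^{\sigma_{\le c}})=-f(c)$ for $m>c$. It also keeps the representation in terms of $f$, which is the quantity the estimators of Section~\ref{sec:profile-estimators} actually target.
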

The factor $r_s(i+1)-(i+1)$ counts the coordinates after position $i+1$ that belong to its parallel update block.

We will also use the following equivalent form of the dependence density:
\begin{equation}\label{eq:rho-beta}
\rho(u)=\sum_{i=0}^{N-2}\iota(i)b_i(u),
\qquad
b_i(u):=(N-1)B_i^{N-2}(u)
=\frac{u^i(1-u)^{N-i-2}}{B(i+1,N-i-1)},
\end{equation}
where $B(\cdot,\cdot)$ is the Beta function. In particular, each $b_i$ integrates to one.

\begin{proof}[Proof of Theorem \ref{thm:main}]
By the decomposition in Lemma~\ref{lem:information-increment-representation}, it suffices to derive an explicit expression for $\E[r_s(i)-i]$ for $i=1,\dots,N-1$, where $s=(s_1,\dots,s_K):=(|z_1|,\dots,|z_K|)$ is the vector of block sizes of $z\sim\nu^\beta$; recall that $r_s(i)$ depends on $z$ only through $s$.

Under $\nu^\beta$, the sizes satisfy
\[
s_k\mid s_1,\dots,s_{k-1}\sim \mathrm{Bin}\left(N-\sum_{j<k}s_j,\frac{\beta_k-\beta_{k-1}}{1-\beta_{k-1}}\right),
\]
and their joint law is the multinomial $\mathrm{Mult}(N;\Delta\beta_1,\dots,\Delta\beta_K)$, where $\Delta\beta_k:=\beta_k-\beta_{k-1}$.
We are going to work with the construction of the multinomial that samples $U_1,\dots,U_N\overset{\mathrm{iid}}{\sim}\mathrm{Unif}(0,1)$ and sets
\[
s_k=\#\{j:U_j\in(\beta_{k-1},\beta_k]\},\qquad C_k=\#\{j:U_j\leq \beta_k\};
\]

Let $U_{(i)}$ denote the $i$-th order statistic of $U_1,\dots,U_N$, so that $U_{(i)}\sim\mathrm{Beta}(i,N-i+1)$ by Lemma \ref{lemma:beta} in Appendix \ref{app:section:main result}, and let 
$k(i)=\min \{k:\beta_k\geq U_{(i)}\}$
be the index of the block containing position $i$.
The overshoot $r_s(i)-i$ counts the indices $j>i$ such that $U_{(j)}$ falls in the same interval $(\beta_{k(i)-1},\beta_{k(i)}]$ as $U_{(i)}$, hence
\[
r_s(i)-i=C_{k(i)}-i.
\]
Conditionally on $U_{(i)}=u$, exactly $i$ of the uniforms are at most $u$, while the remaining order statistics have the law of the ordered values of $N-i$ independent uniforms on $(u,1)$.
Therefore, writing $F(u):=\min \{\beta_k:\beta_k\geq u\}$ for the right endpoint of the interval containing $u$,
\[
C_{k(i)}-i\,|\,U_{(i)}=u\sim \mathrm{Bin}\left(N-i,\frac{F(u)-u}{1-u}\right),
\]
hence 
\begin{equation}\label{eq:overshoot-cond-exp}
    \E[r_s(i)-i\mid U_{(i)}=u]=(N-i)\frac{F(u)-u}{1-u}.
\end{equation}
Using \eqref{eq:overshoot-cond-exp}, the law of $U_{(i)}\sim \mathrm{Beta}(i,N-i+1)$, and $F(u)=\beta_k$ on $(\beta_{k-1},\beta_k]$, we obtain
\begin{align*}
    \E[r_s(i)-i]&= \frac{N-i}{B(i,N-i+1)} \sum_{k=1}^K \int_{\beta_{k-1}}^{\beta_k} \frac{\beta_k-u}{1-u}  u^{i-1}(1-u)^{N-i}du\\
    &= N \sum_{k=1}^K \int_{\beta_{k-1}}^{\beta_k} (\beta_k-u)  \frac{u^{i-1}(1-u)^{N-i-1}}{B(i,N-i)}du\\
    &=N\sum_{k=1}^K \int_{\beta_{k-1}}^{\beta_k} (\beta_k-u) b_{i-1}(u)du,
\end{align*}
where we used $(N-i)/B(i,N-i+1)=N/B(i,N-i)$ and the definition of $b_i$ in \eqref{eq:rho-beta}. 
Substitution into \eqref{eq:lavenant-factorization} yields
\begin{align}\label{eq:Nfact}
\efact(\beta)
&=N\sum_{i=0}^{N-2}\iota(i)
\sum_{k=1}^K\int_{\beta_{k-1}}^{\beta_k}(\beta_k-u)b_i(u)\,du\nonumber\\
&=N\sum_{k=1}^K\int_{\beta_{k-1}}^{\beta_k}(\beta_k-u)\rho(u)\,du,
\end{align}
by exchanging the finite sums and using \eqref{eq:rho-beta}.
\end{proof}

\begin{lemma}\label{lemma:beta}
Let $U_1,\dots,U_N\overset{iid}{\sim}\mathrm{Unif}(0,1)$, and $U_{(i)}$ denote the $i$-th order statistic. Then 
\[
U_{(i)}\sim\mathrm{Beta}(i,N-i+1).
\]
\end{lemma}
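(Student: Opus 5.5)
The plan is to compute the distribution function of $U_{(i)}$ directly and then differentiate it. For $u\in[0,1]$, the event $\{U_{(i)}\le u\}$ coincides with the event that at least $i$ of the $N$ uniforms fall in $[0,u]$. The indicators $\mathbbm 1_{\{U_j\le u\}}$ are i.i.d.\ $\mathrm{Bern}(u)$, so their count is $\mathrm{Bin}(N,u)$. This gives
\[
F_i(u):=\PP(U_{(i)}\le u)=\sum_{m=i}^{N}\binom Nm u^m(1-u)^{N-m}.
\]

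Next we differentiate term by term, which produces a telescoping sum. Using $m\binom Nm=N\binom{N-1}{m-1}$ and $(N-m)\binom Nm=N\binom{N-1}{m}$, we get
\[
\frac{d}{du}\binom Nm u^m(1-u)^{N-m}=N\Bigl[B^{N-1}_{m-1}(u)-B^{N-1}_{m}(u)\Bigr],
\]
with the convention $B^{N-1}_N\equiv0$. Summing over $m=i,\dots,N$, every intermediate term cancels and only the first survives:
\[
F_i'(u)=N B^{N-1}_{i-1}(u)=N\binom{N-1}{i-1}u^{i-1}(1-u)^{N-i}.
\]

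Finally we match constants. We have
\[
N\binom{N-1}{i-1}=\frac{N!}{(i-1)!\,(N-i)!}=\frac{\Gamma(N+1)}{\Gamma(i)\Gamma(N-i+1)}=\frac{1}{B(i,N-i+1)}.
\]
So $F_i$ is absolutely continuous (indeed a polynomial) with density equal to the $\mathrm{Beta}(i,N-i+1)$ density, which proves the claim.

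The only step that needs care is the telescoping bookkeeping at the boundary term $m=N$. An alternative argument avoids it: the event $U_{(i)}\in[u,u+du]$ requires choosing one uniform in the infinitesimal interval ($N$ ways), $i-1$ of the remaining $N-1$ below $u$, and the rest above $u$. This heuristic gives the same density immediately. I would present the CDF argument, because it is fully rigorous.
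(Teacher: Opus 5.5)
Your proposal is correct and follows essentially the same route as the paper: both write the CDF as the binomial tail $\sum_{k\ge i}\binom Nk u^k(1-u)^{N-k}$ and differentiate using $k\binom Nk=N\binom{N-1}{k-1}$ and $(N-k)\binom Nk=N\binom{N-1}{k}$. Both then telescope to $N\binom{N-1}{i-1}u^{i-1}(1-u)^{N-i}$ and identify the constant as $1/B(i,N-i+1)$ via Gamma functions.
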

\begin{proof}[Proof of Lemma \ref{lemma:beta}]
Since $\{U_{(i)}\leq u\}$ occurs if and only if at least $i$ of the variables $U_j$ are at most $u$, and $\#\{j:U_j\leq u\}\sim \mathrm{Bin}(N,u)$, we have
\[
\PP(U_{(i)}\leq u)=\sum_{k=i}^N\binom{N}{k}u^k(1-u)^{N-k}.
\]
Differentiating with respect to $u$ and using $k\binom{N}{k}=N\binom{N-1}{k-1}$ and $(N-k)\binom{N}{k}=N\binom{N-1}{k}$,
\begin{align*}
  f_{U_{(i)}}(u)&=\sum_{k=i}^N\binom{N}{k}\Bigl(ku^{k-1}(1-u)^{N-k}-(N-k)u^k(1-u)^{N-k-1}\Bigr)\\
  &= N\sum_{j=i-1}^{N-1}\binom{N-1}{j}u^{j}(1-u)^{N-j-1}
  -N\sum_{k=i}^{N-1}\binom{N-1}{k}u^k(1-u)^{N-k-1} \\
  &=N \binom{N-1}{i-1}u^{i-1}(1-u)^{N-i},
\end{align*}
by telescoping. Since for positive integers
\[
\frac{1}{B(i,N-i+1)}=\frac{\Gamma(N+1)}{\Gamma(i)\Gamma(N-i+1)}=\frac{N!}{(i-1)!(N-i)!}=N \binom{N-1}{i-1},
\]
the final quantity is the density of the $\mathrm{Beta}(i,N-i+1)$ distribution.
\end{proof}

\begin{proof}[Proof of Corollary \ref{cor:recursive}]
If $\pi$ is a product measure, then by Proposition \ref{prop:product-example} $\efact(\beta)=0$ for every schedule $\beta$, so all of them are optimal. 

Assume now that $\pi$ is not a product measure. By Lemma \ref{lem:product-measure}, $\rho(u)>0$ for every $u\in(0,1)$.
Since $\varepsilon_\text{fact}$ is continuous in $\beta$ on the compact simplex, a minimizer exists. 
Define 
\[
\Phi(a,b)=\int_a^b(b-u)\rho(u)du,
\]
so that, by Theorem \ref{thm:main}, 
$
\varepsilon_\text{fact}(\beta_1,\dots,\beta_{K-1})=N\sum_{k=1}^K \Phi(\beta_{k-1},\beta_k)$.

We first show that every minimizer is an interior point.
For any $a<b<c$,  
\[
\Phi(a,c)-\Phi(a,b)-\Phi(b,c)=(c-b)\int_a^b \rho(u)du>0,
\]
since $\rho>0$ on $(0,1)$, hence
\[
\Phi(a,c)>\Phi(a,b)+\Phi(b,c).
\]
This shows that splitting an interval strictly decreases the cost. Consequently, if a schedule has a collapsed interval, the corresponding redundant point can be moved to split some non-trivial interval and strictly decrease the objective. Therefore the minimizer must satisfy
\[
0<\beta_1<\cdots<\beta_{K-1}<1.
\]

Next, for $a<b$,
\[
\partial_b \Phi(a,b)=\int_a^b \rho(u)du,\qquad \partial_a \Phi(a,b)=-(b-a)\rho(a).
\]
Since $\beta_k$ appears only in $\Phi(\beta_{k-1},\beta_k)$ and $\Phi(\beta_k,\beta_{k+1})$, the stationary condition at an interior minimizer gives
\[
0=\frac{1}{N}\frac{\partial\varepsilon_\text{fact} }{\partial \beta_k}= 
\int_{\beta_{k-1}}^{\beta_k}\rho(u)du-(\beta_{k+1}-\beta_k)\,\rho(\beta_k).
\]
which yields the desired recursion.

For the proof of uniqueness, the recursion formula \eqref{eq:recursive formula} can be written as
\[
\Delta_{k+1}=\Psi(\beta_k,\Delta_{k}),\quad \beta_{k}=\beta_{k-1}+\Delta_{k}, \quad \Delta_1=\beta_1,
\]
where
\[
\Psi(b,\delta):=\frac{1}{\rho(b)}\int_{b-\delta}^b \rho(u)du.
\]
First observe that $\Psi$ is strictly increasing in its second argument: for $\delta<\delta'$ with $b-\delta'\geq0$,
\[
\Psi(b,\delta')-\Psi(b,\delta)=\frac{1}{\rho(b)}\int_{b-\delta'}^{b-\delta}\rho(u)\,du>0,
\]
since $\rho>0$ on $(0,1)$.

Suppose now that two schedules with initial values $\beta_1<\tilde \beta_1$ both satisfy the recursion and the terminal condition $\beta_K=\tilde \beta_K=1$. Then $\Delta_1=\beta_1<\tilde\beta_1=\tilde\Delta_1$, and if $\beta_k<\tilde\beta_k$ and $\Delta_k<\tilde\Delta_k$, then
\[
\Delta_{k+1}=\Psi(\beta_k,\Delta_k)\leq\Psi(\tilde\beta_k,\Delta_k)<\Psi(\tilde\beta_k,\tilde\Delta_k)=\tilde\Delta_{k+1},
\]
using first the monotonicity in the first argument and then the strict monotonicity in the second; consequently $\beta_{k+1}<\tilde\beta_{k+1}$. By induction $\beta_K<\tilde\beta_K$, contradicting $\beta_K=\tilde\beta_K=1$.

It remains to verify the monotonicity in $b$ when $\ell:=\log\rho$ is concave.
Substituting $u=t+b-\delta$,
\[
\Psi(b,\delta)=\int_0^\delta e^{\ell(t+b-\delta)-\ell(b)}\,dt,
\]
so differentiating under the integral sign gives
\[
    \partial_b\Psi(b,\delta)
    =\int_0^\delta \bigl[\ell'(t+b-\delta)-\ell'(b)\bigr]\, e^{\ell(t+b-\delta)-\ell(b)}\,dt\ \geq\ 0,
\]
since concavity makes $\ell'$ nonincreasing and $t+b-\delta\leq b$. 

\end{proof}

\begin{proof}[Proof of Remark \ref{prop:heuristics}]
First let  $\rho(u)= u^{-2}$ and consider the geometric schedule $\beta_k=(1+a)\beta_{k-1}$ for $k=2,\dots,K$, with $\beta_1=(1+a)^{-(K-1)}$ so that $\beta_K=1$. For every $k\geq2$
\[
\int_{\beta_{k-1}}^{\beta_k} \frac{1}{u^2}\,du=\int_{\beta_k/(1+a)}^{\beta_k} \frac{1}{u^2}\,du = \frac{a}{\beta_k},
\]
so, since $\rho(\beta_k)= \beta_k^{-2}$, the increment prescribed by \eqref{eq:recursive formula} is
\[
\beta_{k+1}-\beta_k = \frac{1}{\rho(\beta_k)}\int_{\beta_{k-1}}^{\beta_k} \rho(u)\,du = a\beta_k,
\]
which is exactly the next geometric increment: the relations for $k=2,\dots,K-1$ hold. The relation for $k=1$ involves $\int_0^{\beta_1}u^{-2}\,du=\infty$ and therefore cannot hold; this is the only failing relation, and the sense in which the geometric schedule solves the recursion.

\end{proof}

\subsection{Proofs of subsection \ref{sec:upper bounds}}
\label{app:proof upper bounds}

\begin{proof}[Proof of Lemma \ref{lemma:correlation}]
By \eqref{eq:inf_prof}, $f(0)=-N^{-1}\sum_{j=1}^NH(X^j)$ and
$f(N-1)=-N^{-1}\sum_{j=1}^NH(X^j\mid X^{[N]\setminus\{j\}})$. Hence
\[
D(\pi)=f(N-1)-f(0)=\sum_{i=0}^{N-2}\iota(i).
\]
The entropy chain rule, averaged over a uniform permutation, gives
$H(X)=-\sum_{j=0}^{N-1}f(j)$. Therefore
\[
\tc(\pi)=\sum_{j=0}^{N-1}\bigl(f(j)-f(0)\bigr)
=\sum_{j=0}^{N-1}\sum_{i=0}^{j-1}\iota(i)
=\sum_{i=0}^{N-2}(N-i-1)\iota(i).
\]
Using $\dtc(\pi)=ND(\pi)-\tc(\pi)$ then yields
\[
\dtc(\pi)=\sum_{i=0}^{N-2}(i+1)\iota(i).
\]
For the integral identities, $b_i$ is the density of $\mathrm{Beta}(i+1,N-i-1)$, so
\[
\int_0^1b_i(u)\,du=1,\qquad
N\int_0^1u\,b_i(u)\,du=i+1,\qquad
N\int_0^1(1-u)b_i(u)\,du=N-i-1.
\]
Multiplying these identities by $\iota(i)$, summing over $i$, and applying \eqref{eq:rho-beta} proves the claimed representations. Finally, both $i+1$ and $N-i-1$ are at least one on the summation range, so nonnegativity of $\iota$ gives $D(\pi)\le\min\{\tc(\pi),\dtc(\pi)\}$.
\end{proof}

\begin{proof}[Proof of Proposition \ref{prop:uniform-bound}]
The intervals $(\beta_{k-1},\beta_k]$, $k=1,\dots,K$, partition $(0,1]$, and for any $u\in(\beta_{k-1},\beta_k]$,
\[
\beta_k-u\le \Delta\beta_k\le \Delta\beta_{\max}.
\]
Therefore, by Theorem \ref{thm:main} and Lemma \ref{lemma:correlation},
\[
\varepsilon_\text{fact}(\beta)\le N \Delta\beta_{\max}\int_0^1\rho(u)du=N\Delta\beta_{\max} D(\pi),
\]
The constant step size $\Delta\beta_k=1/K$ gives $\Delta\beta_{\max}=1/K$, from which we  deduce the second display of the statement.

Finally, $I(X^i;X^{[N]-i})\le H(X^i)\le\log|\x|$ for every $i$, since conditional entropy is nonnegative and $X^i$ takes values in $\x$; averaging over $i$ yields $D(\pi)\le\log|\x|$.
\end{proof}

\begin{proof}[Proof of Proposition \ref{prop:boundprevious-work}]
For the geometric schedule, split the first interval from the others as
\[
\varepsilon_{\mathrm{fact}}
=
N\int_0^{\beta_1}(\beta_1-u)\rho(u)\,du
+
N\sum_{k=2}^K\int_{\beta_{k-1}}^{\beta_k}(\beta_k-u)\rho(u)\,du.
\]
For the first term, since $\beta_1=a/N$,
\[
N\int_0^{\beta_1}(\beta_1-u)\rho(u)\,du
\le
N\beta_1\int_0^{\beta_1}\rho(u)\,du
\le
a\,D(\pi).
\]
For $k\ge 2$ and $u\in[\beta_{k-1},\beta_k]$, we have
\[
\beta_k-u
\le
\beta_k-\beta_{k-1}
\le
a\beta_{k-1}
\le
au,
\]
hence
\[
N\sum_{k=2}^K\int_{\beta_{k-1}}^{\beta_k}(\beta_k-u)\rho(u)\,du
\le
aN\int_{\beta_1}^1 u\,\rho(u)\,du
\le
a\,\dtc(\pi),
\]
using Lemma \ref{lemma:correlation} and $\rho\ge0$. 
Combining the two estimates gives $\varepsilon_{\mathrm{fact}}(\beta)\le a(\dtc(\pi)+D(\pi))$, and $D(\pi)\leq DTC(\pi)$ yields the stated bound.

For the reverse geometric schedule split the last interval instead. Since $1-\beta_{K-1}=a/N$ and $\beta_K=1$,
\[
N\int_{\beta_{K-1}}^{1}(1-u)\,\rho(u)\,du
\le
N(1-\beta_{K-1})\int_{\beta_{K-1}}^{1}\rho(u)\,du
=
a\int_{\beta_{K-1}}^{1}\rho(u)\,du
\le
a\,D(\pi).
\]
For $k\le K-1$ and $u\in(\beta_{k-1},\beta_k]$ we have $1-\beta_{k-1}\le(1+a)(1-\beta_k)$, hence
\[
\beta_k-u\le\beta_k-\beta_{k-1}=(1-\beta_{k-1})-(1-\beta_k)\le a(1-\beta_k)\le a(1-u),
\]
so that
\[
N\sum_{k=1}^{K-1}\int_{\beta_{k-1}}^{\beta_k}(\beta_k-u)\,\rho(u)\,du
\le
aN\int_0^{\beta_{K-1}} (1-u)\,\rho(u)\,du
\le
a\,\tc(\pi).
\]
Combining and using $D(\pi)\le \tc(\pi)$ concludes the proof.
\end{proof}

\section{Proofs of Section \ref{sec:asymptotic}}\label{proof:asymptotic}

\begin{proof}[Proof of Lemma \ref{lem:uniform-rho}]
Set $a_{N,j}:=(N-1)\iota_N(j)$ and
\[
G_N(u):=\sum_{j=0}^{N-2}g\!\left(\frac{j}{N-2}\right)B_j^{N-2}(u).
\]
Since the Bernstein basis is nonnegative and sums to one,
\[
\|\rho_N-G_N\|_\infty
\le\max_{0\le j\le N-2}\left|a_{N,j}-g\!\left(\frac{j}{N-2}\right)\right|
\longrightarrow0.
\]
Bernstein's approximation theorem gives $\|G_N-g\|_\infty\to0$ because $g$ is continuous. The triangle inequality proves the claim.
\end{proof}

\begin{proof}[Proof of Theorem \ref{thm:asym}]

Let $h=1/K$, $t_k=kh$ and $\Delta\beta_k:=\beta_k-\beta_{k-1}$ and set
\[
I_k^N=\int_{\beta_{k-1}}^{\beta_k } (\beta_k-u)\rho_N(u)du,
\]
so that  $\efact^{N,K}(\beta)=N\sum_{k=1}^K I_k^N$
by Theorem~\ref{thm:main}. 

First notice that since $\beta\in\mathcal C^1([0,1])$ we have $\max_k\Delta\beta_k\le\|\beta'\|_\infty h=O(K^{-1})$, and therefore
\[
\sum_{k=1}^K(\Delta\beta_k)^2\le\Bigl(\max_k\Delta\beta_k\Bigr)\sum_{k=1}^K\Delta\beta_k=O(K^{-1}).
\]
The proof proceeds in three steps.

\emph{Step 1: replacing $\rho_N$ by $g$.}  Let
$\eta_N:=\|\rho_N-g\|_\infty$, which tends to zero by Assumption~\ref{ass:G_N-asymp}.  Then
\begin{align*}
\left|
\sum_{k=1}^K I_k^N
-
\sum_{k=1}^K
\int_{\beta_{k-1}}^{\beta_k}
(\beta_k-u)g(u)\,du
\right|
&\le
\eta_N
\sum_{k=1}^K
\int_{\beta_{k-1}}^{\beta_k}
(\beta_k-u)\,du \\
&=
\frac{\eta_N}{2}
\sum_{k=1}^K
(\Delta\beta_k)^2=o(K^{-1}).
\end{align*}

\emph{Step 2: freezing $g$ on each interval.}
As $g$ is uniformly continuous, let $\omega_g$ denote its modulus of continuity; then for $u\in[\beta_{k-1},\beta_k]$
\[
|g(u)-g(\beta_{k-1})|\le \omega_g(\Delta\beta_k),\qquad \omega_g(\delta)\underset{\delta\downarrow 0}{\longrightarrow} 0.
\]
Hence
\[
\int_{\beta_{k-1}}^{\beta_k}
(\beta_k-u)g(u)\,du
=
\frac12 g(\beta_{k-1})(\Delta\beta_k)^2
+
O\!\left((\Delta\beta_k)^2\omega_g(\Delta\beta_k)\right).
\]
Summing over $k$ and using $\max_k\Delta\beta_k=O(K^{-1})$, gives
\[
\sum_{k=1}^K
\int_{\beta_{k-1}}^{\beta_k}
(\beta_k-u)g(u)\,du
=
\frac12
\sum_{k=1}^K
g(\beta_{k-1})(\Delta\beta_k)^2
+
o(K^{-1}).
\]

\emph{Step 3: Riemann sum.} Let $\omega_{\beta'}$ be the modulus of continuity of $\beta'$.  By the mean value theorem, $\Delta\beta_k=h\beta'(\xi_k)$ for some $\xi_k\in(t_{k-1},t_k)$, and $|\beta'(\xi_k)-\beta'(t_{k-1})|\le\omega_{\beta'}(h)$, so 
\[
\bigl|(\Delta\beta_k)^2-h^2\beta'(t_{k-1})^2\bigr|
\le
2\|\beta'\|_\infty\,\omega_{\beta'}(h)\,h^2
=
o(h^2)
\]
uniformly in $k$.
Since $g$ is bounded, summing the $K$ error terms gives $o(h)$, and therefore
\begin{align*}
\frac12
\sum_{k=1}^K
g(\beta_{k-1})(\Delta\beta_k)^2
&=
\frac{h}{2}
\left[
h\sum_{k=1}^K
g(\beta(t_{k-1}))\beta'(t_{k-1})^2
\right]
+
o(h) \\
&=
\frac{1}{2K}
\int_0^1
g(\beta(t))\beta'(t)^2\,dt
+
o(K^{-1}),
\end{align*}
the last step because $t\mapsto g(\beta(t))\beta'(t)^2$ is continuous, so the bracket is a Riemann sum converging to its integral.

Combining these estimates and multiplying by $N$ proves the claim.
Note that the error term of Step~1 requires only $N\to\infty$, while those of Steps~2 and~3 depend only on $K$; the two limits therefore decouple, and no relation between $N$ and $K$ is needed.
\end{proof}

\begin{proof}[Proof of Corollary \ref{cor:asymptotic-optimal-solution}]
Let 
\[
\mathcal I(\beta)=\int_0^1 g(\beta(t))\beta'(t)^2\,dt.
\]
To characterize the minimizer of $\mathcal I(\beta)$, observe that 
\[
\left(\int_0^1 \sqrt{g(u)}\,du\right)^2=\left(\int_0^1 \sqrt{g(\beta(t))}\,\beta'(t)\,dt\right)^2
\le
\int_0^1 g(\beta(t))\beta'(t)^2\,dt,
\]
where the first equality comes from a change of variable, since $\beta$ is increasing and $\beta(0)=0$, $\beta(1)=1$, while the second comes from Cauchy-Schwarz. As a consequence equality holds if and only if $t\mapsto\sqrt{g(\beta(t))}\,\beta'(t)$ is constant, that is, $\beta'(t)\propto g(\beta(t))^{-1/2}$.
 
It remains to check that this equality case is attained by an admissible schedule. Since $g$ is continuous and strictly positive on $[0,1]$, the function $G(y)=\int_0^y\sqrt{g(u)}\,du$ is a strictly increasing $\mathcal C^1$ bijection from $[0,1]$ onto $[0,G(1)]$ with $G'=\sqrt g>0$ on $[0,1]$, so $\beta^*(t):=G^{-1}(t\,G(1))$ is well defined, of class $\mathcal C^1$ on $[0,1]$, and satisfies
\[
\beta^*(0)=0,\qquad \beta^*(1)=1,\qquad
(\beta^*)'(t)=\frac{G(1)}{\sqrt{g\bigl(\beta^*(t)\bigr)}}>0 ,
\]
so that $\sqrt{g(\beta^*)}\,(\beta^*)'\equiv G(1)$ is constant and $\mathcal I(\beta^*)=G(1)^2$. Substituting into \eqref{eq:asympt epsilon} yields the stated expression for $\efact^{N,K}(\beta^*)$.

\end{proof}

\begin{proof}[Proof of Proposition \ref{prop:comparison-disc}]
By \eqref{eq:coefficient-convergence}, Lemma~\ref{lem:uniform-rho} gives Assumption~\ref{ass:G_N-asymp}, and $D(\pi_N)\to D_\infty$. Dividing the rescaled increments in \eqref{eq:coefficient-convergence} by $D(\pi_N)$ therefore gives uniform convergence to $g/D_\infty$, which is the normalized profile required by \citet[Assumption~1]{lavenant2025error}; changing between their grid and $j/(N-2)$ does not affect the limit, by continuity of $g$. Both limits defining $\Delta_{\bar s}$ then exist, by Theorem~\ref{thm:asym} and by \citet[Thm.~15]{lavenant2025error} respectively, after undoing their normalization. Subtracting the two limiting expressions and writing $\theta(t)=\{\bar s\beta'(t)\}$,
\[
2\Delta_{\bar s}
=
\int_0^1 g\bigl(\beta(t)\bigr)\Bigl[\bar s\,\beta'(t)^2-\bar s\,h_{\bar s}\bigl(\beta'(t)\bigr)+\beta'(t)\Bigr]dt
=
\int_0^1 g\bigl(\beta(t)\bigr)\left[\beta'(t)-\frac{\theta(t)\bigl(1-\theta(t)\bigr)}{\bar s}\right]dt,
\]
which is the stated identity.

For the sign, since $\bar s\beta'(t)=\lfloor\bar s\beta'(t)\rfloor+\theta(t)$ with $\theta(t)\in[0,1)$,
\[
\theta(t)\bigl(1-\theta(t)\bigr)\le\theta(t)\le\bar s\,\beta'(t),
\]
so the integrand is nonnegative and $\Delta_{\bar s}\ge0$.

For the size of the difference, using $0\le\theta(1-\theta)\le\tfrac14$ in the identity above,
\[
D_\infty-\frac{1}{4\bar s}\int_0^1 g\bigl(\beta(t)\bigr)dt
\ \le\
2\Delta_{\bar s}
\ \le\
D_\infty ,
\]
which gives both $0\le\Delta_{\bar s}\le\tfrac12 D_\infty$ and
$\bigl|\Delta_{\bar s}-\tfrac12 D_\infty\bigr|\le\tfrac{1}{8\bar s}\int_0^1 g(\beta(t))\,dt\le\|g\|_\infty/(8\bar s)$.
\end{proof}

\section{Proofs of Section \ref{sec:examples}}\label{app:ex proofs}
We first express the conditional mutual information profile in terms of average subset entropies, then derive a representation of $\rho$ useful for stationary targets.

\begin{lemma}\label{lemma:fH}
For $m=0,\dots,N$, define
\[
\bar H_m:=\frac1{\binom Nm}\sum_{A\subseteq[N],\,|A|=m}H(X^A).
\]
Then $f(i)=\bar H_i-\bar H_{i+1}$ for $i=0,\dots,N-1$, and $\iota(i)=2\bar H_{i+1}-\bar H_i-\bar H_{i+2}$ for $i=0,\dots,N-2$.
\end{lemma}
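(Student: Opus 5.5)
The plan is to rewrite the permutation expectation in \eqref{eq:inf_prof} as an average over subsets, and then use the chain rule for entropy. Nothing beyond the symmetry of the uniform permutation and the definition of $\iota$ in \eqref{eq:cmi-profile} is needed.

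\textbf{Step 1: the formula for $f$.} Fix $i\in\{0,\dots,N-1\}$. Under a uniform permutation $\sigma$, the pair $(\sigma_{\le i},\sigma_{i+1})$ has the following law: $A:=\sigma_{\le i}$ is uniform among the $\binom Ni$ subsets of size $i$, and given $A$, the index $j:=\sigma_{i+1}$ is uniform on $[N]\setminus A$. Write $B:=A\cup\{j\}$. This set is uniform among subsets of size $i+1$, and given $B$, the element $j$ is uniform in $B$. The chain rule gives $H(X^j\mid X^{A})=H(X^{B})-H(X^{B\setminus\{j\}})$. Average first over $j$ given $B$, then over $B$. The first term averages to $\bar H_{i+1}$. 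For the second term, note that $B\setminus\{j\}$ has the same law as $A$, namely uniform of size $i$, so it averages to $\bar H_i$. Since $f(i)$ is minus the expected conditional entropy, this yields $f(i)=-(\bar H_{i+1}-\bar H_i)=\bar H_i-\bar H_{i+1}$.

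\textbf{Step 2: the formula for $\iota$.} For $i=0,\dots,N-2$, apply identity \eqref{eq:cmi-increment}, $\iota(i)=f(i+1)-f(i)$. Substituting Step 1 gives
\[
\iota(i)=(\bar H_{i+1}-\bar H_{i+2})-(\bar H_i-\bar H_{i+1})=2\bar H_{i+1}-\bar H_i-\bar H_{i+2}.
\]
The identity \eqref{eq:cmi-increment} itself rests on two facts. The first is $I(X^a;X^b\mid X^A)=H(X^b\mid X^A)-H(X^b\mid X^{A\cup\{a\}})$. The second is that the triple $(\sigma_{\le i},\sigma_{i+2})$ has the same joint law as $(\sigma_{\le i},\sigma_{i+1})$. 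Since the paper states \eqref{eq:cmi-increment}, I will simply invoke it, adding at most a one-line justification.

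\textbf{Expected obstacle.} The only delicate point is the exchangeability bookkeeping in Step 1, namely checking that $B\setminus\{j\}$ is again uniform of size $i$. Everything else is algebra. As a consistency check, the boundary cases behave correctly: $\bar H_0=H(\varnothing)=0$ gives $f(0)=-\bar H_1$, and $f(N-1)=\bar H_{N-1}-H(X)$.
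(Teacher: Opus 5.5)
Your proposal is correct and follows essentially the same route as the paper: average the chain rule $H(X^j\mid X^A)=H(X^{A\cup\{j\}})-H(X^A)$ over the uniform pair $(\sigma_{\le i},\sigma_{i+1})$, then obtain $\iota$ from \eqref{eq:cmi-increment}. The only difference is presentational. The paper counts multiplicities (each $B$ arises from $i+1$ pairs, each $A$ from $N-i$ choices of $j$), whereas you note directly that $A\cup\{j\}=\sigma_{\le i+1}$ and $A=\sigma_{\le i}$ are uniform of sizes $i+1$ and $i$.
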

\begin{proof}[Proof of Lemma \ref{lemma:fH}]
The pair $(A,j):=(\sigma_{\leq i},\sigma_{i+1})$ is uniform over the ordered pairs with $|A|=i$ and $j\notin A$, so by the chain rule $H(X^{A\cup\{j\}})=H(X^A)+H(X^j\mid X^A)$,
\[
\E_\sigma\bigl[H(X^{\sigma_{i+1}}\mid X^{\sigma_{\leq i}})\bigr]
=\frac{1}{\binom{N}{i}(N-i)}\sum_{|A|=i,\ j\notin A}\bigl(H(X^{A\cup\{j\}})-H(X^A)\bigr).
\]
Each $B$ with $|B|=i+1$ arises from exactly $i+1$ such pairs $A\cup\{j\}$, and $\binom Ni(N-i)=\binom N{i+1}(i+1)$, so the first sum equals $\bar H_{i+1}$; the second equals $\bar H_i$, since each $A$ occurs for $N-i$ choices of $j$. Hence $f(i)=\bar H_i-\bar H_{i+1}$, and the expression for $\iota(i)=\Delta f(i)=f(i+1)-f(i)$ follows.
\end{proof}

\begin{lemma}\label{lemma:representation}
For $u\in[0,1]$ let $T_u:=\{t\in[N]:\xi_t=1\}$ with $\xi_1,\dots,\xi_N\overset{\mathrm{iid}}{\sim}\mathrm{Bern}(u)$, and define $\mathcal H(u):=\E\bigl[H(X^{T_u})\bigr]$ with the entropy evaluated for each fixed subset. Then
\[
\rho(u)=-\tfrac1N\,\mathcal H''(u).
\]
\end{lemma}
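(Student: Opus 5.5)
We expand $\mathcal H(u)$ in the Bernstein basis and then differentiate twice, using the standard formula for second derivatives of Bernstein polynomials. Conditioning on $|T_u|=m$ makes $T_u$ uniform among subsets of size $m$. Hence
\[
\mathcal H(u)=\sum_{m=0}^N \bar H_m\,B_m^N(u),
\]
with $\bar H_m$ as in Lemma~\ref{lemma:fH}. This is a polynomial of degree $N$ whose Bernstein coefficients are the average subset entropies.

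Next we apply the classical identity
\[
\frac{d^2}{du^2}\sum_{m=0}^N c_m B_m^N(u)=N(N-1)\sum_{i=0}^{N-2}\Delta^2 c_i\,B_i^{N-2}(u),
\qquad \Delta^2 c_i=c_{i+2}-2c_{i+1}+c_i.
\]
It follows from applying twice the first-derivative rule $\frac{d}{du}\sum_m c_mB_m^n=n\sum_{m}(c_{m+1}-c_m)B_m^{n-1}$, which in turn comes from $\frac{d}{du}B_m^n=n(B_{m-1}^{n-1}-B_m^{n-1})$ and a reindexing. We would either quote this identity as standard or verify it in one line.

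Finally, Lemma~\ref{lemma:fH} gives $\Delta^2\bar H_i=\bar H_{i+2}-2\bar H_{i+1}+\bar H_i=-\iota(i)$. Substituting, we obtain
\[
\mathcal H''(u)=-N(N-1)\sum_{i=0}^{N-2}\iota(i)B_i^{N-2}(u)=-N\rho(u)
\]
by Definition~\ref{def:dependence-density}, which is the claim.

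No step is a real obstacle. The only points needing care are the index shifts in the Bernstein derivative formula and the boundary terms at $m=0$ and $m=N$, which vanish under the convention $B_{-1}^{n-1}=B_n^{n-1}=0$. The first step must also be justified carefully: given $|T_u|=m$, every subset of size $m$ is equally likely, and this is exactly what produces $\bar H_m$.
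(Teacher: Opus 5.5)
Your proposal is correct and follows the paper's proof essentially step for step. Both expand $\mathcal H(u)=\sum_{m}\bar H_m B_m^N(u)$ using uniformity of $T_u$ given $|T_u|=m$, then differentiate twice with the Bernstein derivative identity. Both then use Lemma~\ref{lemma:fH} to identify $\bar H_{i+2}-2\bar H_{i+1}+\bar H_i=-\iota(i)$ and conclude via Definition~\ref{def:dependence-density}.
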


\begin{proof}[Proof of Lemma \ref{lemma:representation}]
Recall that $B_{m}^{N}:=\binom Nm u^m(1-u)^{N-m}$ is the $m$-th Bernstein basis polynomial of degree $N$, with the convention $B_{j}^{n}\equiv0$ for $j\notin\{0,\dots,n\}$.

Conditionally on $|T_u|=m$, the set $T_u$ is uniform among the subsets of size $m$, so
\[
\bar H_m:=\E\left[H(X^{T_u})\,\big| \, |T_u|=m\right]=\frac1{\binom Nm}\sum_{A\subseteq[N],\,|A|=m}H(X^A)
\]
and
\begin{align*}
\mathcal H(u)&=\E\left[\E\left[H(X^{T_u})\,\big| \, |T_u|\right]\right]\\
&=\sum_{m=0}^N \binom Nm u^m(1-u)^{N-m}\,\bar H_m =\sum_{m=0}^N B_{m}^{N}(u)\,\bar H_m.    
\end{align*}
Differentiating twice the Bernstein identity $\frac{d}{du}\sum_{m}B_{m}^{n}(u)c_m=n\sum_m B_{m}^{n-1}(u)(c_{m+1}-c_m)$ and re-indexing,
\[
\mathcal H''(u)=N(N-1)\sum_{i=0}^{N-2}\bigl(\bar H_{i+2}-2\bar H_{i+1}+\bar H_i\bigr)B_i^{N-2}(u).
\]
By Lemma \ref{lemma:fH} the bracket equals $-\iota(i)$, and by \eqref{eq:bernstein_rep} we obtain
\[
\mathcal H''(u)
=-N(N-1)\sum_{i=0}^{N-2}\iota(i)B_i^{N-2}(u)
=-N\rho(u). 
\]
\end{proof}

\begin{proof}[Proof of Proposition \ref{prop:stationary-example}]
Write $\mathcal H_N(u):=\E[H(X^{T_u})]$ with $T_u\subseteq[N]$ as in Lemma~\ref{lemma:representation}. 
For each realization of $T_u$, the chain rule in increasing order of the indices gives 
\[
H(X^{T_u})=\sum_{t=1}^N\xi_t\,H(X^t\mid X^{T_u\cap[t-1]}).
\]
Since $\xi_t$ is independent of $T_u\cap[t-1]$,
\[
\mathcal H_N(u)=u\sum_{t=1}^N\E\bigl[H(X^t\mid X^{T_u\cap[t-1]})\bigr].
\]
The lag set $\{t-a:a\in T_u\cap[t-1]\}$ has the law of $S_{t-1,u}$, and by stationarity $H(X^t\mid X^{T_u\cap[t-1]})=h(S_{t-1,u})$ in distribution. Using $h(A)=h_0-I(A)$ and $I(\varnothing)=0$,
\begin{equation}\label{eq:proof invariant}
\mathcal H_N(u)=u\sum_{t=1}^{N}\E\bigl[h(S_{t-1,u})\bigr]=N h_0 u-u\sum_{t=1}^{N-1}\E\bigl[I(S_{t,u})\bigr].
\end{equation}
Since the first term is linear in $u$, Lemma~\ref{lemma:representation} gives the finite-$N$ formula.

For the limit, realize all the Bernoulli sets on one probability space: let $\xi_d\overset{\mathrm{iid}}{\sim}\mathrm{Bern}(u)$ for $d\ge1$ and put $S_{t,u}:=\{d\in[t]:\xi_d=1\}$, $S_u:=\{d\ge1:\xi_d=1\}$, so that $S_{t,u}=S_u\cap[t]\uparrow S_u$. For $A\subseteq\mathbb N$ set $I(A):=\lim_{m}I(A\cap[m])$, the limit existing because $m\mapsto I(A\cap[m])$ is nondecreasing and bounded by $h_0$. Then $I(S_{t,u})\uparrow I(S_u)$ surely, so $\E I(S_{t,u})\to\E I(S_u)$ by monotone convergence, and by Cesàro
\[
\Phi_N(u):=\frac{u}{N}\sum_{t=1}^{N-1}\E\bigl[I(S_{t,u})\bigr]\ \longrightarrow\ \Phi_\infty(u):=u\,\E\bigl[I(S_u)\bigr]
\qquad\text{for every }u\in[0,1].
\]
By the above, $\rho_N=\Phi_N''$, and under Assumption~\ref{ass:G_N-asymp} $\rho_N\to g$ uniformly. Since $\Phi_N(0)=0$, Taylor's formula with integral remainder gives
\[
\Phi_N(u)=\Phi_N'(0)\,u+\int_0^u (u-s)\,\Phi_N''(s)\,ds .
\]
Taking $u=1$ and using the uniform convergence of $\Phi_N''$ together with $\Phi_N(1)\to\Phi_\infty(1)$, the sequence $\Phi_N'(0)$ converges, say to $c$. Passing to the limit in the display yields $\Phi_\infty(u)=cu+\int_0^u(u-s)g(s)\,ds$, so $\Phi_\infty$ is twice continuously differentiable with $\Phi_\infty''=g$, that is,
\[
\rho_N(u)\longrightarrow\frac{d^2}{du^2}\bigl[u\,\E I(S_u)\bigr]. \qedhere
\]
\end{proof}

\begin{proof}[Proof of Proposition \ref{prop:markov-example}]
By the Markov property, conditioning on the whole revealed past is equivalent to conditioning on the nearest revealed coordinate, so $I(A)=I_{\min A}$ for every nonempty finite $A\subseteq\mathbb N$, and $I(\varnothing)=0$. Since $\PP(\min S_{t,u}=j)=u(1-u)^{j-1}$ for $j=1,\dots,t$,
\[
\E\bigl[I(S_{t,u})\bigr]=\sum_{j=1}^{t}u(1-u)^{j-1}I_j .
\]
Substituting into \eqref{eq:proof invariant} and exchanging the order of summation, each $j\in\{1,\dots,N-1\}$ occurring for the $N-j$ indices $t\in\{j,\dots,N-1\}$,
\[
u\sum_{t=1}^{N-1}\E\bigl[I(S_{t,u})\bigr]=\sum_{j=1}^{N-1}(N-j)\,I_j\,u^2(1-u)^{j-1}=G_N(u),
\]
so that $\mathcal H_N(u)=Nh_0u-G_N(u)$, and \eqref{eq:markov finite} follows from Lemma~\ref{lemma:representation}.

For the limit, set $w_j(u):=u^2(1-u)^{j-1}$, so that $\rho_N=\sum_{j=1}^{N-1}\bigl(1-\tfrac jN\bigr)I_j\,w_j''$. Writing $m=j-1$,
\[
w_j''(u)=2(1-u)^{m}-4mu(1-u)^{m-1}+m(m-1)u^2(1-u)^{m-2},
\]
and since $\max_u u(1-u)^{m-1}\le 1/m$ and $\max_u u^2(1-u)^{m-2}\le 4/m^2$, we get $\sup_u|w_j''(u)|\le10$ for every $j$. Therefore, with $\rho_\infty:=\sum_{j\ge1}I_j w_j''$,
\[
\bigl\|\rho_N-\rho_\infty\bigr\|_\infty
\le
\frac{10}{N}\sum_{j=1}^{N-1}j\,I_j+10\sum_{j\ge N}I_j .
\]
Both terms vanish as $N\to\infty$: the second because $\sum_j I_j<\infty$, the first by Kronecker's lemma. The convergence is therefore uniform, and since the differentiated series converges uniformly, term-by-term differentiation of $\sum_{j\ge1}I_jw_j(u)=u\,\E[I_{D_u}]$, $D_u\sim\mathrm{Geom}(u)$, is legitimate, giving $\rho_\infty=\frac{d^2}{du^2}\bigl(u\,\E[I_{D_u}]\bigr)$.
\end{proof}

\begin{proof}[Proof of Proposition \ref{prop:exchangeable-example}]
For every fixed permutation, exchangeability gives
\[
I(X^{\sigma_{i+1}};X^{\sigma_{i+2}}\mid X^{\sigma_{\le i}})
=I(X^{i+1};X^{i+2}\mid X^{1:i}),
\]
so that $\iota_N(m)=\iota(m)$ for $m\le N-2$. Substitution into \eqref{eq:rho-binomial} proves \eqref{eq:exchangeable finite}.

For the asymptotics, fix $u\in(0,1)$, let $B_N\sim\mathrm{Bin}(N-2,u)$ and $A_N:=\{B_N\ge\tfrac u2(N-2)\}$. By a Chernoff bound there is $c_u>0$, nondecreasing in $u$, with $\PP(A_N^c)\le e^{-c_uN}$. Since $\x$ is finite, $0\le\iota(m)\le\log|\x|$, so $(N-1)\E[\iota(B_N)\mathbf 1_{A_N^c}]\le(N-1)\log|\x|\,e^{-c_uN}\to0$. Given $\varepsilon>0$, choose $m_\varepsilon$ with $\iota(m)\le\varepsilon/m$ for $m\ge m_\varepsilon$; for $N$ large enough, $B_N\ge\tfrac u2(N-2)\ge m_\varepsilon$ on $A_N$, so
\[
(N-1)\E\bigl[\iota(B_N)\mathbf 1_{A_N}\bigr]\le\varepsilon\,\frac{2(N-1)}{u(N-2)} .
\]
Hence $\limsup_N\rho_N(u)\le2\varepsilon/u$ for every $\varepsilon>0$, so $\rho_N(u)\to0$. 
For $u=1$, $B_N=N-2$ almost surely and $\rho_N(1)=(N-1)\iota(N-2)\to0$ since $\iota(m)=o(1/m)$. For $u=0$, $B_N=0$ almost surely and $\rho_N(0)=(N-1)\iota(0)=(N-1)I(X^1;X^2)\to+\infty$.
\end{proof}

\begin{proof}[Proof of Proposition \ref{prop:comparison-schedules}]
Write $\iota(m)=I(X^{m+1};X^{m+2}\mid X^{1:m})$ and $K_N=1+\lceil\log_2N\rceil$; nondegeneracy of $Q$ gives $\iota(0)=I(X^1;X^2)>0$.

\emph{Uniform schedule.} Since $\iota(m)\ge0$, keeping only the $m=0$ term in \eqref{eq:exchangeable finite} gives $\rho_N(u)\ge(N-1)\iota(0)(1-u)^{N-2}$, and keeping only $k=1$ in \eqref{eq:mainresult},
\[
\varepsilon_{\mathrm{fact}}^{\mathrm{unif}}
\ \ge\
N(N-1)\iota(0)\int_0^{1/K_N}\Bigl(\frac1{K_N}-u\Bigr)(1-u)^{N-2}\,du .
\]
For $N$ large enough, $K_N\le N/2$, so $1/N\le 1/(2K_N)$; restricting the integral to $[0,1/N]$ and using $\tfrac1{K_N}-u\ge\tfrac1{K_N}-\tfrac1N\ge\tfrac1{2K_N}$ together with $(1-u)^{N-2}\ge(1-1/N)^{N-2}\ge\tfrac13$ there,
\[
\varepsilon_{\mathrm{fact}}^{\mathrm{unif}}
\ \ge\
N(N-1)\iota(0)\cdot\frac{1}{2K_N}\cdot\frac13\cdot\frac1N
=\frac{(N-1)\iota(0)}{6K_N}
\ \ge\ \frac{CN}{\log N}
\]
for a constant $C>0$, since $K_N=O(\log N)$.

\emph{Geometric schedule.} The schedule $\beta_1=1/N$, $\beta_k=\min\{1,2\beta_{k-1}\}$ is the case $a=1$ of Proposition~\ref{prop:boundprevious-work} and reaches $1$ in at most $K_N$ steps, so
\[
\varepsilon_{\mathrm{fact}}^{\star}(\pi_N,K_N)\le\varepsilon_{\mathrm{fact}}^{\mathrm{geom}}(\pi_N,K_N)\le2\,\dtc(\pi_N).
\]
By exchangeability $H(X^i\mid X^{[N]\setminus\{i\}})=H(X^N\mid X^{1:N-1})\ge H(X^N\mid X^{1:N-1},P)=\E[H(P)]$, and $H(X^{1:N}\mid P)=N\,\E[H(P)]$ because the coordinates are conditionally i.i.d.; hence
\[
\dtc(\pi_N)=H(X^{1:N})-N\,H(X^N\mid X^{1:N-1})\le H(X^{1:N})-H(X^{1:N}\mid P)=I(P;X^{1:N}).
\]
For a $p$-dimensional parametric prior satisfying the regularity conditions of \citet{clarke1994jeffreys}, $I(P;X^{1:N})=\tfrac p2\log N+O(1)$, so $\varepsilon_{\mathrm{fact}}^{\star}(\pi_N,K_N)=O(\log N)$. Combining the two bounds gives the stated ratio.
\end{proof}

\section{Proofs of Section \ref{sec:profile-estimators}}

\begin{proof}[Proof of Proposition \ref{prop:unbiased estimators}]
Conditionally on $Z$, the index $J$ is uniform on $[N]\setminus Z$ and independent of $X$,
\begin{equation}\label{eq:bias1}
f(i)
=\mathbb E\big[\log \pi(X^J\mid X^Z)\big]
=\mathbb E\Big[\tfrac{1}{N-i}\textstyle\sum_{j\notin Z}\log\pi_j(X^j\mid X^Z)\Big]
=\mathbb E\big[\tilde f_i\big].    
\end{equation}
Conditionally on $(Z,X^Z)$, each unrevealed coordinate
$X^j$ has distribution $\pi_j(\cdot\mid X^Z)$. 
Therefore
\begin{equation}\label{eq:blackwell}
\begin{aligned}
\E[\tilde f_i\mid Z,X^Z]
&=
\frac{1}{N-i}\sum_{j\notin Z}
\E\!\left[
\log\pi_j(X^j\mid X^Z)
\,\middle|\, Z,X^Z
\right]\\
&=
-\frac{1}{N-i}\sum_{j\notin Z}
H\bigl(\pi_j(\cdot\mid X^Z)\bigr)
=
\hat f_i.
\end{aligned}
\end{equation}
Taking expectation with respect to $(Z,X^Z)$ and using the tower property together with \eqref{eq:bias1} yields
\[
\mathbb E\big[\hat f_i\big]
=\mathbb E \left[ \,\mathbb E\left[\tilde f_i\mid Z,X^Z\right]\right]
=\mathbb E\big[\tilde f_i\big]
=f(i). 
\]
Finally, the law of total variance applied to $\tilde f_i$ and
\eqref{eq:blackwell} give
\begin{align*}
\var(\tilde f_i)&=\E[\var(\tilde f_i\mid Z,X^Z)]+\var(\E[\tilde f_i\mid Z,X^Z]) \ge \var(\hat f_i).
\end{align*}
Thus, $\hat f_i$ is the Rao-Blackwellization of
$\tilde f_i$ with respect to $(Z,X^Z)$.
\end{proof}

\begin{proof}[Proof of Proposition \ref{prop:bias}]
Conditionally on $(Z,X^Z)$, write
\[
p_j:=\pi_j(\cdot\mid X^Z),
\qquad
q_j:=p_\theta^j(\cdot\mid X^Z),
\qquad j\notin Z,
\] 
so that
\[
\kl{p_j}{q_j}=-\E\bigl[\log q_j(X^j)\bigr]+\E\bigl[\log p_j(X^j)\bigr].
\]
Averaging over $j\notin Z$ and taking expectation over $(Z,X^Z)$, the second term is $f(i)$ by \eqref{eq:bias1} and the first is $-\E[\tilde f^\theta_i]$, giving $\E[\tilde f_i^\theta]-f(i)=-\mathcal L_i$. The sign comes from  the non-negativity of KL divergence.

For the entropy estimator, let $T_j:=\tfrac12\|p_j-q_j\|_1$. The Fannes-Audenaert inequality \citep{fannes1973continuity,audenaert2006sharp} gives
\begin{equation}\label{eq:aud}
|H(p_j)-H(q_j)|\le\phi(T_j).
\end{equation}
Since
\[
\E[\hat f_i^\theta]-f(i)
=
\E[H(p_J)-H(q_J)],
\]
and $\phi$ is concave on $[0,1]$, being the sum of a linear term and the concave binary entropy, Jensen's inequality yields
\[
\begin{aligned}
\bigl|\E[\hat f_i^\theta]-f(i)\bigr|
\le
\E\bigl[|H(p_J)-H(q_J)|\bigr]
\le
\E[\phi(T_J)]
\le
\phi(\E[T_J]).
\end{aligned}
\]
By Pinsker's inequality, the concavity of the square root and the assumption on $\mathcal{L}_i$,
\[
0\le\E[T_J]
\le
\E\!\left[\sqrt{\kl{p_J}{q_J}/2}\right]
\le
\sqrt{\E[\kl{p_J}{q_J}]/2}
=
\sqrt{\mathcal L_i/2}\le 1-1/|\x|
\]
As $\phi'(T)=\log\!\big(\tfrac{(|\x|-1)(1-T)}{T}\big)>0$ for $T<1-1/|\x|$, $\phi$ is nondecreasing on $(0,1-1/|\x|)$, so we conclude that
\[
\bigl|\E[\hat f_i^\theta]-f(i)\bigr|
\le
\phi(\E[T_J])
\le
\phi\!\left(\sqrt{\mathcal L_i/2}\right).
\]

For the rate, $H_2(T)=T\log(1/T)+O(T)$ as $T\to0$, so $\phi(T)=O(T\log(1/T))$. Substituting $T=\sqrt{\mathcal L_i/2}$ gives the stated rate.
\end{proof}

\begin{proof}[Proof of Proposition \ref{prop:estimator variance}]
Since $0\leq H(p)\leq \log |\x|$ for every $p\in\mathcal{P}(\x)$,  each $\hat f_{i,m}^{\theta}$ takes values in $[-\log |\x|,0]$. Popoviciu's inequality therefore gives
$\var \hat f_{i,m}^{\theta}\leq\frac{(\log|\x|)^2}{4}$   
and, by independence and identical distribution, 
\[
\var(\bar f_i)
=
\frac1n\var(\hat f_{i,1}^{\theta})
\le
\frac{(\log|\x|)^2}{4n}.
\]
Hoeffding's inequality for bounded independent summands gives, for every $t>0$,
\[
\PP\left(
\bigl|\bar f_i-\E[\hat f_i^\theta]\bigr|\ge t
\right)\le2\exp\Bigl(-\frac{2nt^2}{(\log|\x|)^2}\Bigr),
\]
and taking $t=\log|\x|\sqrt{\frac{\log(2/\delta)}{2n}}$ proves the claimed probability bound.
\end{proof}

\begin{proof}[Proof of Proposition \ref{prop:stability}]
For $i=0,\ldots,N-2$, set
\[
w_i(\beta):=N\sum_{k=1}^K\int_{\beta_{k-1}}^{\beta_k}
(\beta_k-u)b_i(u)\,du,
\]
 $b_i(u)=(N-1)B_i^{N-2}(u)$, as in \eqref{eq:rho-beta}.
The definition of $\rho_a$ gives \[
\efact(a,\beta)=\sum_{i=0}^{N-2}a(i)w_i(\beta),
\]
proving linearity. 
Since $b_i$ is nonnegative and integrates to one,
$
0\le w_i(\beta)\le N\Delta\beta_{\max}$, consequently,
\[
\left|\efact(\iota,\beta)-\efact(\bar\iota,\beta)\right|
\le\|\iota-\bar\iota\|_1\|w(\beta)\|_\infty
\le N\Delta\beta_{\max}\|\iota-\bar\iota\|_1,
\]
which proves \eqref{eq:first-ineq-efact}. 

For the optimization bound, applying \eqref{eq:first-ineq-efact} at $\hat\beta$ gives
\[
\efact(\iota,\hat\beta)
\le\efact(\bar\iota,\hat\beta)
+N\Delta\hat\beta_{\max}\|\iota-\bar\iota\|_1,
\]
while optimality of $\hat\beta$ and another application of \eqref{eq:first-ineq-efact},
\[
\efact(\bar\iota,\hat\beta)
\le\efact(\bar\iota,\beta^*)
\le\efact(\iota,\beta^*)
+N\Delta\beta^*_{\max}\|\iota-\bar\iota\|_1.
\]
Combining these inequalities yields
\[
\begin{aligned}
\efact(\iota,\hat\beta)-\efact(\iota,\beta^*)
\le
N\bigl(
\Delta_{\max}(\hat\beta)+\Delta_{\max}(\beta^*)
\bigr)
\|\iota-\bar\iota\|_1,
\end{aligned}
\]
which implies the stated upper bound.
The lower bound follows from the optimality of $\beta^*$
for the true profile.
\end{proof}

\subsection{Implementation details for the mixture target}\label{app:conditionals}

Let $U\subseteq[N]$ be the revealed set of coordinates with values $X^U$. 
By Bayes' rule, the posterior distribution of the latent variable is
\begin{equation}\label{eq:latent-posterior}
P(Z=z\mid x^U)=\frac{w_z\prod_{j\in U}\mu_{z,j}(x^j)}{\sum_{z'=1}^r w_{z'}\prod_{j\in U}\mu_{z',j}(x^j)}.
\end{equation}
Conditional independence given $Z$ then yields, for every $B\subseteq[N]\setminus U$,
\begin{align}
\pi(x^B\mid x^U)
&=\sum_{z=1}^r \PP(Z=z\mid X^U=x^U)\prod_{i\in B}\mu_{z,i}(x^i),\label{eq:block-conditional}
\\
\pi_i(x^i\mid x^U)
&=\sum_{z=1}^r \PP(Z=z\mid X^U=x^U)\,\mu_{z,i}(x^i),\qquad i\notin U.\label{eq:coord-conditional}
\end{align}

\paragraph{Estimating $\iota$ and $\rho$ through the auxiliary profile.}
We estimate the information profile $f$ using the entropy estimator in \eqref{eq:entropy-estimator}. 
For each Monte Carlo draw, we sample $X\sim\pi$ and, independently, a uniformly random permutation $\sigma$ of $[N]$. 
For $i=0,\ldots,N-1$, let 
$U_i=\{\sigma_1,\dots,\sigma_i\}$ 
be the set of coordinates revealed after $i$ steps. Conditional on $X^{U_i}$, the contribution to the estimate of $f(i)$ is 
\[
\hat f_i
=
-\frac{1}{N-i}
\sum_{j\notin U_i}
H\!\left(
\pi_j(\,\cdot\mid X^{U_i})
\right).
\]

Starting from the prior weights $\PP(Z=z\mid X^{U_0})=w_z$, we update the latent log-posterior after revealing coordinate $\sigma_{i+1}$ as
\[
\log \PP\left(
Z=z\mid X^{U_{i+1}}
\right)
=
\log \PP\left(
Z=z\mid X^{U_i}
\right)
+
\log\mu_{z,\sigma_{i+1}}
\left(X^{\sigma_{i+1}}\right)
+
\mathrm{const}.
\]

Thus, a single pass through the permutation produces the entire vector
$(\hat f_0,\ldots,\widehat f_{N-1})$.
Direct evaluation costs $O(N^2rL)$ per draw:
for each of the $N$ revealed-set sizes, we compute the conditional distributions and entropies of the remaining coordinates, at a cost of $O(rL)$ per coordinate.
We average the profiles over $4000$ independent draws.

Let $f^{\mathrm{MC}}$ denote the resulting raw average.
To enforce the monotonicity of the true profile, we compute its isotonic regression, that is its least-squares projection onto the set of non-decreasing sequences
\[
\bar f
:=
\arg\min_{v_0\le\cdots\le v_{N-1}}
\sum_{i=0}^{N-1}
\bigl(v_i-f_i^{\mathrm{MC}}\bigr)^2.
\]
We then set
\[
\bar\iota(i):=\bar f_{i+1}-\bar f_i\ge0,
\qquad i=0,\ldots,N-2,
\]
and construct $\bar\rho$ using \eqref{eq:bernstein_rep}.

\paragraph{Monte Carlo estimate of $\varepsilon_{\mathrm{fact}}$.}
Independently of the information-profile calculation, for each target and schedule $\beta$, we estimate $\varepsilon_{\mathrm{fact}}(\beta)$ directly from \eqref{eq:factorization-error} using $4000$ draws. 

For each draw, we sample $X\sim\pi$ and, following the multinomial coupling used in the proof of Theorem~\ref{thm:main}, independently assign each coordinate $i$ to a reveal step distributed as $\mathrm{Cat}(\Delta\beta_1,\dots,\Delta\beta_K)$, realizing  the distribution $\nu^\beta$. 
We then proceed block by block and accumulate the difference between the log block conditional in \eqref{eq:block-conditional} and the sum of the corresponding log one-coordinate conditionals in \eqref{eq:coord-conditional}.

\end{document}